\newcommand\cplus{\mathbin{\raisebox{-\height}{$+$}}}
\newcommand\contdots{\raisebox{-\height}{$\vphantom{+}\dotsm$}}
\newenvironment{roster}
 {\begin{enumerate}[font=\upshape,label=(\alph*)]}
 {\end{enumerate}}
\def\@settitle{\begin{center}%
  \baselineskip14\p@\relax
  \normalfont\LARGE\bfseries
  \@title
  \ifx\@subtitle\@empty\else
     \\[1ex] 
     \normalsize\mdseries\@subtitle
  \fi
 \ifx\@didication\@empty\else
     \\[2ex] 
     \large\mdseries\it\@dedication
  \fi
  \end{center}%
}
\def\subtitle#1{\gdef\@subtitle{#1}}
\def\@subtitle{}
\def\dedication#1{\gdef\@dedication{#1}}
\def\@dedication{}
\def\vmargin@#1#2#3{
\setbox0=\hbox{#3}%
\rule[#1]{0pt}{\ht0}%
\lower\dp0\hbox{\rule[-#2]{0pt}{\dp0}}%
\box0%
}
\def\vmargin#1#2#3{
\mathchoice
{\vmargin@{#1}{#2}{$\displaystyle #3$}}
{\vmargin@{#1}{#2}{$\textstyle #3$}}
{\vmargin@{#1}{#2}{$\scriptstyle #3$}}
{\vmargin@{#1}{#2}{$\scriptscriptstyle #3$}}
}
\newtheorem{theorem}{Theorem}[section] 
\newtheorem{Theorem}[theorem]{Theorem}
\newtheorem{Lemma}[theorem]{Lemma}
\newtheorem{Corollary}[theorem]{Corollary}
\newtheorem{Proposition}[theorem]{Proposition}
\newtheorem{Claim}[theorem]{Claim}
\theoremstyle{definition}
\newtheorem{Definition}[theorem]{Definition}
\newtheorem{Example}[theorem]{Example}
\newtheorem{Remark}[theorem]{Remark}
\newtheorem{Notation}[theorem]{Notation}
\begin{document}
\title{Lissajous 3-braids}  
\dedication{Dedicated to the Memory of Toshie Takata} 

\author{Eiko \textsc{Kin}}
\address{Center for Education in Liberal Arts and Sciences, 
Osaka University, Toyonaka, Osaka 560-0043, Japan}
\email{kin@celas.osaka-u.ac.jp}

\author{Hiroaki \textsc{Nakamura}}
\address{
Department of Mathematics, 
Graduate School of Science, 
Osaka University, 
Toyonaka, Osaka 560-0043, Japan}
\email{nakamura@math.sci.osaka-u.ac.jp}

\author{Hiroyuki \textsc{Ogawa}}
\address{
Department of Mathematics, 
Graduate School of Science, 
Osaka University, 
Toyonaka, Osaka 560-0043, Japan}
\email{ogawa@math.sci.osaka-u.ac.jp}



\begin{abstract}
We classify 3-braids arising from collision-free choreographic
motions of 3 bodies on Lissajous plane curves, and present a
parametrization in terms of levels and (Christoffel)
slopes. 
Each of these Lissajous 3-braids represents a pseudo-Anosov
mapping class whose dilatation increases 
when the level ascends in the natural numbers
or when the slope descends in the Stern-Brocot tree.
We also discuss 4-symbol frieze patterns that encode
cutting sequences of geodesics along the Farey tessellation
in relation to odd continued fractions of quadratic surds
for the Lissajous 3-braids.
\end{abstract}

\maketitle
\newcommand{\ig}[1]{\includegraphics[width=4cm]{#1}}
\newcommand{\cSap}[1]{{\cS_{\rm ap}({#1})}}
\newcommand{\tvect}[3]{%
   \ensuremath{\Bigl(\negthinspace\begin{smallmatrix}#1\\#2\\#3\end{smallmatrix}\Bigr)}}
\newcommand{\bvect}[2]{%
   \ensuremath{\Bigl(\negthinspace\begin{smallmatrix}#1\\#2\end{smallmatrix}\Bigr)}}
\newcommand{\bmatx}[4]{%
   \ensuremath{\Bigl(\negthinspace\begin{smallmatrix}#1&#2\\#3&#4\end{smallmatrix}\Bigr)}}
\newcommand*\mycirc[1]{%
   \begin{tikzpicture}[baseline=(C.base)]
     \node[draw,circle,inner sep=1pt](C) {$#1$};
   \end{tikzpicture}}
\def\bI{{\mathtt i}}
\def\ee{{e}}
\def\chrff{\mathsf{cw}}
\def\pww{\mathsf{pw}}
\def\pp{{\mathsf p}}
\def\bb{{\mathsf b}}
\def\qq{{\mathsf q}}
\def\dd{{\mathsf d}}
\def\cc{{\mathsf c}}
\def\sA{{\mathsf A}}
\def\sB{{\mathsf B}}
\def\sW{{\mathsf W}}
\def\sH{{\mathsf H}}
\def\sI{{\mathsf I}}
\def\sJ{{\mathsf J}}
\def\sS{{\mathsf S}}
\def\N{{\mathbb N}}
\def\C{{\mathbb C}}
\def\Z{{\mathbb Z}}
\def\R{{\mathbb R}}
\def\Q{{\mathbb Q}}
\def\bQ{{\overline{\Q}}}
\def\bsigma{{\bar\sigma}}
\def\cD{{\mathcal{D}}}
\def\cM{{\mathcal{M}}}
\def\cS{{\mathcal{S}}}
\def\cH{{\mathcal{H}}}
\def\cM{{\mathcal{M}}}
\def\cR{{\mathcal{R}}}
\def\cT{{\mathcal{T}}}
\def\bp{{\mathbf p}}
\def\bq{{\mathbf q}}
\def\bP{{\mathbf P}}
\def\bG{{\mathbf G}}
\def\PSL{{\mathrm{PSL}}}
\def\Conf{{\mathrm{Conf}}}
\def\et{\text{\'et}}
\def\ab{\mathrm{ab}}
\def\proP{{\text{pro-}p}}
\def\padic{{p\mathchar`-\mathrm{adic}}}
\def\la{\langle}
\def\ra{\rangle}
\def\scM{\mathscr{M}}
\def\lala{\la\!\la}
\def\rara{\ra\!\ra}
\def\ttx{{\mathtt{x}}}
\def\tty{{\mathtt{y}}}
\def\ttz{{\mathtt{z}}}
\def\bkappa{{\boldsymbol \kappa}}
\def\scLi{{\mathscr{L}i}}
\def\sLL{{\mathsf{L}}}
\def\Gal{\mathrm{Gal}}
\def\GL{\mathrm{GL}}
\def\Coker{\mathrm{Coker}}
\def\area{\mathrm{area}}
\def\sgn{\mathrm{sgn}}
\def\Ker{\mathrm{Ker}}
\def\eps{\varepsilon}
\def\check{{\clubsuit}}
\def\kaitobox#1#2#3{\fbox{\rule[#1]{0pt}{#2}\hspace{#3}}\ }
\def\vru{\,\vrule\,}
\newcommand*{\longhookrightarrow}{\ensuremath{\lhook\joinrel\relbar\joinrel\rightarrow}}
\newcommand{\hooklongrightarrow}{\lhook\joinrel\longrightarrow}
\def\nyoroto{{\rightsquigarrow}}
\newcommand{\pathto}[3]{#1\overset{#2}{\dashto} #3}
\newcommand{\pathtoD}[3]{#1\overset{#2}{-\dashto} #3}
\def\dashto{{\,\!\dasharrow\!\,}}
\def\ovec#1{\overrightarrow{#1}}
\def\isom{\,{\overset \sim \to  }\,}
\def\longisom{\,{\overset \sim \longrightarrow}\ }
\def\GT{{\widehat{GT}}}
\def\bfeta{{\boldsymbol \eta}}
\def\brho{{\boldsymbol \rho}}
\def\bomega{{\boldsymbol \omega}}
\def\sha{\scalebox{0.6}[0.8]{\rotatebox[origin=c]{-90}{$\exists$}}}
\def\upin{\scalebox{1.0}[1.0]{\rotatebox[origin=c]{90}{$\in$}}}
\def\downin{\scalebox{1.0}[1.0]{\rotatebox[origin=c]{-90}{$\in$}}}
\def\torusA{{\epsfxsize=0.7truecm\epsfbox{torus1.eps}}}
\def\torusB{{\epsfxsize=0.5truecm\epsfbox{torus2.eps}}}
\newcommand{\RN}[1]{%
  \textup{\uppercase\expandafter{\romannumeral#1}}%
}
\DeclareRobustCommand\longtwoheadrightarrow
     {\relbar\joinrel\twoheadrightarrow}
\def\bbS{{\mathbb S}}
\def\bbT{{\mathbb T}}
\def\bS{{\mathbf S}}
\def\bT{{\mathbf T}}
\def\idp{{\rm idp}}
\newcommand{\defret}{%
  \mathrel{\vbox{\offinterlineskip\ialign{%
    \hfil##\hfil\cr
$\scriptstyle\hookleftarrow$
\cr
$\longtwoheadrightarrow$
\cr
}}}}

\footnote[0]{
This paper will be published in {\it Journal of the Mathematical 
Society of Japan}.

This work was supported by JSPS KAKENHI 
Grant Numbers JP18K03299, JP21K03247, JP20H00115.}

\section{Introduction}

A closed curve on the complex plane $\C$ 
defined with one real parameter $t\in\R$:
\begin{equation} \label{Lissajous11}
L(t)=  
\sin\bigl(2\pi mt\bigr) 
 +\bI\,\sin\bigl(2\pi nt\bigr)
 \qquad (t\in\R,\, \bI=\sqrt{-1})
\end{equation}
is popularly called a {\it Lissajous curve} of type 
$(m,n)\in \Z^2$. 
The curve $\{L(t)\}_{t\in\R}$ has the unit period,
viz. $1=\mathrm{min}\{a>0\mid \forall t\in\R, L(t+a)=L(t)\}$, 
if and only if  $\mathrm{gcd}(m,n)=1$.
Under this setting, let us focus on the behavior of 
three points 
\begin{equation}
\label{abcOnLt}
a(t)=L\Bigl(t-\frac{1}{3}\Bigr), \quad b(t)=L(t), \quad c(t)=L\Bigl(t+\frac{1}{3}\Bigr),
\end{equation}
which move on the curve  to trace one another
in the cyclic order $a\to b\to c\to a$. 
See Figure \ref{fig_examples}. 
If $a(t),b(t),c(t)$ remain distinct to form a (possibly degenerate) triangle 
along $t\in\R$, 
then their collision-free motion in $0\le t\le 1$ 
gives rise to a 3-strand braid. 
The purpose of this article is to 
classify all such 3-braids with explicit labels and to 
study their basic 
``doubly palindromic'' properties and their pseudo-Anosov dilatations. 

\begin{figure}[h]
\label{fig_examples}
\begin{center}
\begin{tabular}{cc}
\includegraphics[width=0.35\textwidth]{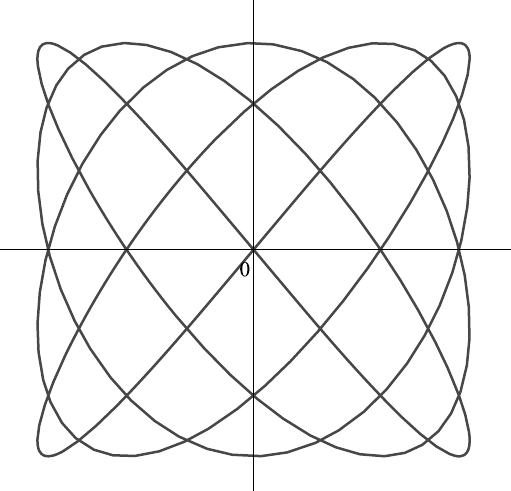} &
\includegraphics[width=0.35\textwidth]{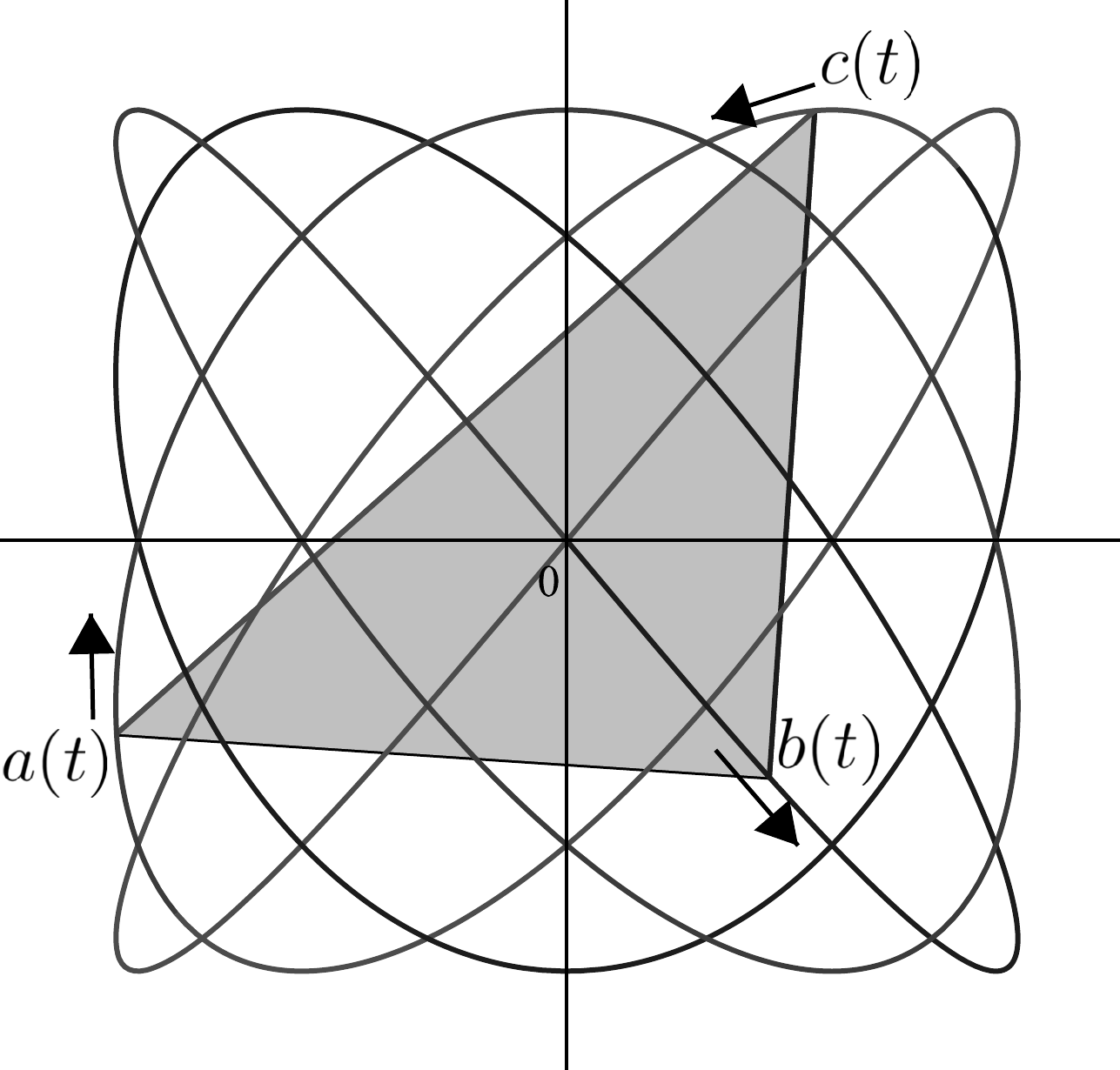}
\end{tabular}
\end{center}
\caption{Lissajous curve of type $(4,-5)$ and triangle choreography on it.}
\end{figure}

To be more precise, let $B_3$ be the 3-strand
Artin braid group 
with standard presentation 
$B_3=\la \sigma_1,\sigma_2\mid \sigma_1\sigma_2\sigma_1
=\sigma_2\sigma_1\sigma_2\ra$.
It is well known that $B_3$ has the cyclic center $C_3\cong\Z$
generated by a full twist $(\sigma_1\sigma_2)^3$
and that the quotient group $\mathcal{B}_3=B_3/C_3$ 
is isomorphic to 
$\PSL_2(\Z)\cong(\Z/2\Z)\ast(\Z/3\Z)$.
For simplicity, we shall call 
elements of both $B_3$ and $\mathcal{B}_3$
(3-)braids.
Under the three point collision-free condition,
the motion of $\{a(t),b(t),c(t)\}_{0\le t\le \frac13}$ 
determines a conjugacy class $C\{m,n\}$ of $\mathcal{B}_3$
which will be called the {\it Lissajous class} 
associated to the 3-body motion on the 
Lissajous curve of type $(m,n)$.
A {\it Lissajous 3-braid} is by definition a representative
of $C\{m,n\}$.
Note that the cube of such a braid represents the
motion of $\{a(t),b(t),c(t)\}_{0\le t\le 1}$ in pure braid form.

Still, different types $(m,n)$ of Lissajous curves 
may give a same conjugacy class of $\mathcal{B}_3$.
The following result classifies all conjugacy classes of 
Lissajous 3-braids.
We also introduce a new set $\mathcal{L}$ of labels ({\it level} and {\it slope})
that fits in illustrating a certain braid representative of each
Lissajous class in combinatorial words.

\begin{Theorem} \label{maintheorem}
Let $a(t),b(t)$ and $c(t)$ be the three points given in $(\ref{abcOnLt})$
on the Lissajous curve
of type $(m,n)\in\Z^2$ with $\mathrm{gcd}(m,n)=1$. 
Then we have the following.
\begin{roster}
\item[(i)] 
The motion $\{a(t),b(t),c(t)\}_{t\in\R}$ is collision-free
if and only if  
$3\nmid mn$
and $m^\ast-n^\ast\not\equiv 0$
mod $6$, where
$(m^\ast,n^\ast)$ is a unique pair with
$m^\ast\in\{\pm m\}$, $n^\ast\in\{\pm n\}$ and
$m^\ast\equiv n^\ast\equiv 1$ mod $3$.
\item[(ii)]
Define a subfamily $\mathscr{P}_0$ of types for collision-free Lissajous curves by 
$$
\mathscr{P}_0:=\left\{(m,n)\in\Z^2
\left|
\begin{matrix}
 \gcd(m,n)=1,m\equiv n\equiv 1\textrm{ mod }3 \\
m\not\equiv n \textrm{ mod }6,
\ mn<0, \ |m|<|n|\le 2|m|
\end{matrix}
\right.
\right\}.
$$
Then, the collection of all
Lissajous classes in
$\mathcal{B}_3$ is bijectively parametrized by the
set $\mathscr{P}_0$ via the mapping
$\mathscr{P}_0\ni (m,n)\mapsto C\{m,n\}$.

\item[(iii)]
To every type $(m,n)\in\mathscr{P}_0$,
there is an associated pair 
$$
(N,\xi)\in \N\times \Q_{\ge 0}
$$
$($called the level $N$ and the slope $\xi)$ 
which determines an explicit representative of 
the Lissajous class $C\{m,n\}$.
Here, $\N$ and $\Q_{\ge 0}$ denote the set of positive integers and 
the set of non-negative rational numbers respectively.

\item[(iv)]
The correspondence $(m,n)\mapsto  (N,\xi)$ 
gives a bijection from $\mathscr{P}_0$ onto
$$
\mathcal{L}:=\left\{
(N,\xi)\in \N\times \Q_{\ge 0}
\left|\,
\xi=\frac{q}{p},\,\gcd(p,q)=\mathrm{gcd}(p+q,6)=1
\right.
\right\}.
$$
\end{roster}
\end{Theorem}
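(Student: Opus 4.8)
The plan is to unwind the geometry of the Lissajous choreography into a combinatorial problem in $\mathcal{B}_3\cong\PSL_2(\Z)$ and then translate that into the level/slope bookkeeping. First I would set up the period-tripled picture: since $a(t),b(t),c(t)$ are the same curve $L$ shifted by $\pm\tfrac13$, the collision-free condition and the resulting braid depend only on the three arcs $L|_{[t_0,t_0+1/3]}$ woven together, so the natural object is the path in the configuration space $\Conf_3(\C)/S_3$ traversed as $t$ runs over $[0,\tfrac13]$; its homotopy class in $\mathcal{B}_3$ is $C\{m,n\}$. Part (i) should come from analyzing when two of the three points coincide, i.e. when $L(s)=L(s')$ for $s-s'\in\{\pm\tfrac13,\pm\tfrac23\}$ modulo $1$; expanding $\sin(2\pi m s)=\sin(2\pi m s')$ and likewise for $n$ and eliminating, one gets congruence conditions on $m,n$ modulo $3$ and modulo $6$. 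The $3\nmid mn$ part is the condition that neither coordinate sine-wave is itself $\tfrac13$-periodic in a degenerate way; the $m^\ast-n^\ast\not\equiv 0 \bmod 6$ is the genuine collision obstruction after normalizing signs so $m^\ast\equiv n^\ast\equiv 1\bmod 3$. I would record the symmetries $(m,n)\mapsto(-m,-n)$ (complex conjugation / time reversal), $(m,n)\mapsto(n,m)$ (reflection across the diagonal), and $(m,n)\mapsto(m,-n)$, and check how each acts on the braid class — several of these fix $C\{m,n\}$ or send it to its inverse or its image under the outer automorphism of $\mathcal{B}_3$, which is exactly what collapses the a priori larger parameter space down to the fundamental domain $\mathscr{P}_0$ with $mn<0$, $|m|<|n|\le 2|m|$.

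For part (ii), once the symmetry group acting on collision-free types is identified, I would show that every orbit meets $\mathscr{P}_0$ in exactly one point (this is the ``fundamental domain'' claim: $|m|<|n|\le 2|m|$ is a strict-inequality wedge so boundary overlaps don't occur, and $mn<0$, $m\equiv n\equiv1\bmod 3$ pin down the representative's signs and residues), and then that distinct points of $\mathscr{P}_0$ really do give distinct conjugacy classes in $\mathcal{B}_3$. The latter non-coincidence is where I expect the main work: two Lissajous braids could conceivably be conjugate in $\PSL_2(\Z)$ without the underlying curves being related by an obvious symmetry, so one needs a genuine invariant. The natural candidate is the cyclic word in $\sigma_1,\sigma_2$ (equivalently the cutting sequence of the corresponding geodesic on the modular surface / its trace or its conjugacy class as a hyperbolic or periodic element of $\PSL_2(\Z)$) — and this is precisely where the ``doubly palindromic'' structure and the 4-symbol frieze / odd-continued-fraction machinery advertised in the abstract should be doing the heavy lifting: the slope $\xi=q/p$ encodes the Christoffel word whose cyclic class is a complete conjugacy invariant, and the level $N$ handles the full-twist / central ambiguity lifted appropriately.

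For parts (iii) and (iv) I would give the explicit recipe $(m,n)\mapsto(N,\xi)$. The level $N$ should be something like the number of times the moving triangle ``winds'' — concretely a count extracted from $m$ and $n$ (e.g. $N$ proportional to $\min(|m|,|n|)$ or to an intersection number of $L$ with a coordinate axis over one $\tfrac13$-period), and the slope $\xi=q/p$ should be read off from the Christoffel-word description of the braid, where $p,q$ are built from $m,n$ by an explicit affine/modular formula. Then (iv) is checking this map lands in $\mathcal{L}$ — the constraint $\gcd(p,q)=\gcd(p+q,6)=1$ should correspond term-by-term to $\gcd(m,n)=1$ together with the mod-$3$ and mod-$6$ conditions of part (i) — and that it is a bijection, which I would do by writing the inverse map $(N,\xi)\mapsto(m,n)$ explicitly (solve the affine relations for $m,n$ in terms of $N,p,q$) and verifying it lands back in $\mathscr{P}_0$ and inverts the forward map. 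The hard part overall will be step (ii)'s injectivity, i.e. proving the cyclic-word/Christoffel invariant separates all classes in $\mathscr{P}_0$; (i), (iii), (iv) are essentially bookkeeping once the right normal form for the braid is in hand, but that normal form — presumably established in the body of the paper via the frieze-pattern and odd-continued-fraction analysis — is the load-bearing input.
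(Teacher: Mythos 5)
Your overall architecture (shape--space curve $\to$ word in $\mathcal{B}_3\cong\PSL_2(\Z)$ $\to$ Christoffel/cutting-sequence invariant) matches the paper's, and your treatment of (iii)--(iv) as ``explicit affine relations between $(m,n)$ and $(N,p,q)$ plus an inverse map'' is essentially what the paper does in (\ref{ml-vs-pq})--(\ref{mn-levelN}). Your route to (i) by directly solving $L(s)=L(s')$ for $s-s'\equiv\pm\frac13$ is a legitimate alternative to the paper's argument (which instead tracks when the shape function $\psi(\Delta(t))$ hits $\{1,\omega,\omega^2\}$), though you do not carry the elimination out.

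The genuine gap is in part (ii). You propose to reduce the set of collision-free types to $\mathscr{P}_0$ by the \emph{finite} symmetry group generated by $(m,n)\mapsto(\pm m,\pm n)$ and $(m,n)\mapsto(n,m)$, and you assert that ``every orbit meets $\mathscr{P}_0$ in exactly one point.'' This fails: each Lissajous class is realized by \emph{infinitely} many types, and the finite orbit of a given type need not meet $\mathscr{P}_0$ at all. Concretely, $(1,4)$ is collision-free and its orbit under your symmetries is $\{(\pm1,\pm4),(\pm4,\pm1)\}$, none of whose members satisfies $mn<0$ together with $m\equiv n\equiv 1\bmod 3$; yet $C\{1,4\}=C\{1,-2\}$ with $(1,-2)\in\mathscr{P}_0$. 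The identification $(1,4)\sim(1,-2)$ is not a mirror or swap symmetry of the Lissajous curve: it comes from the periodicity of the $\eps$-sequence in $\ell$ modulo $2m$, which yields the relations $\sH_{m,n}=\sH_{m,\,n-6mj}$ and $\sH_{m,n}=\sH_{2m-n+6mj,\,m}$ (Lemma \ref{ReductionH0}), i.e.\ a translation action $n\mapsto n\bmod 6m$ combined with the reciprocity $m\leftrightarrow n$. The reduction to $\mathscr{P}_0$ then requires a Euclidean-algorithm-style descent alternating these two moves until $|m|<|n|\le 2|m|$, and the uniqueness of the representative (your ``exactly one point'') is precisely the injectivity question, which the paper settles by recovering $(N,\xi)$ --- hence $(m,n)$ via the affine relations --- from the shift-equivalence class of the $\bb\dd\pp\qq$-frieze pattern of $\sH_{m,n}$. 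Without these modular identities your parametrization claim in (ii) is not merely incomplete but based on the wrong equivalence relation on types.
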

Theorem \ref{maintheorem}(ii) (resp. (iii)) classifies the Lissajous classes in terms of the subfamily $\mathscr{P}_0$ 
(resp. the set of labels $\mathcal{L}$). 
The representative 
of $C\{m,n\}$ stated in Theorem \ref{maintheorem}(iii) will be given 
in Proposition \ref{LR-cluster} together with Lemma \ref{Hshape0} 
from an `$N$-level boosted' 
Christoffel word of slope $\xi$.
The above correspondence $(m,n)\mapsto  (N,\xi)$
will be presented explicitly in Definition \ref{level-slope}.

Each  Lissajous class $C\{m,n\}$
corresponds to a conjugacy class of hyperbolic elements 
of $\PSL_2(\Z)$ (Corollary \ref{hyperbolicCor}), 
i.e., forms a pseudo-Anosov class of $\mathcal{B}_3$.
We denote the dilatation (stretch factor) of  
a pseudo-Anosov element $\beta \in \mathcal{B}_3$
by $\lambda(\beta)$, which 
is equal to the bigger eigenvalue $(>1)$
of the matrix representation of $\beta$ in  $\PSL_2(\Z)$.
Note that $\lambda(\beta)$ is constant on
the conjugacy class of $\beta\in\mathcal{B}_3$.
For $(m,n)\in \mathscr{P}_0$, 
let $(N,\frac{q}{p}) \in \mathcal{L}$ be the associated pair of the level and the slope 
introduced in Theorem \ref{maintheorem} (iii), and 
let $\sW_{(N, q/p)} \in \mathcal{B}_3$ be a representative of  $C\{m,n\}$. 
The following theorem suggests that 
the set of labels $\mathcal{L}$  is useful to compare 
dilatations between the Lissajous classes.

\begin{Theorem} 
\label{thm_dilataions}
We have the following inequalities: 
\begin{roster}
\item[(i)] 
 $\lambda(\sW_{(N, q/p)})< \lambda(\sW_{(N+1, q/p)})$ 
 for $(N,  \frac{q}{p}) \in \mathcal{L}$;

 \item[(ii)] 
 $\lambda(\sW_{(N, s/r)}) < \lambda(\sW_{(N, q/p)})$  
 for $ (N, \frac{s}{r}), (N,  \frac{q}{p}) \in   \mathcal{L}$ such  that 
 $\frac{q}{p}$ is a descendant of $\frac{s}{r}$ in the Stern-Brocot tree. 
\end{roster}
\end{Theorem}

\noindent
See \S \ref{section_dilataions} for a quick account of the Stern-Brocot tree.

The organization of this paper proceeds as follows. 
In \S 2 we set up our framework to understand Lissajous 3-braids as
motions of plane triangles that induce orbits on 
the moduli space of similarity classes of triangles (called the {\it shape sphere}). 
Analyzing those orbits of Lissajous motions leads us to an explicit formula
that expresses Lissajous 3-braids in standard generators of $\mathcal{B}_3$
(Proposition \ref{mainFormula}).
In \S 3, we observe that a Lissajous 3-braid forms a
closed curve on the 3-cyclic quotient of the shape sphere
and that its standard lift on the upper half plane $\mathfrak{H}$
can be described `efficiently' by a certain finite symbolic sequence 
({\it frieze pattern} in four symbols $\bb,\dd,\pp,\qq$).
We illustrate how the latter sequence fits in a work of
F.P.Boca and C.Merriman (\cite{BM18}) on 
cutting sequences of infinite geodesics 
along the checkered Farey tessellation of $\mathfrak{H}$
and odd continued fractions of an associated quadratic irrational.
In \S 4, we introduce the notion of {\it level and slope} 
for Lissajous 3-braids and settle the proof of Theorem \ref{maintheorem}.
In Proposition \ref{LR-cluster} we will see 
how each label $(N,\xi)\in\mathcal{L}$ naturally produces 
the associated $\bb\dd\pp\qq$-frieze pattern. 
In \S \ref{section_dilataions} 
we study the pseudo-Anosov dilatations of Lissajous 3-braids.

\bigskip
Throughout this paper, we fix the notation 
$\omega:=\exp(2\pi\bI/3)$, 
$\rho:=\exp(2\pi\bI/6)\in\C$.

\section{Periodic orbit of triangle's shapes}
\subsection{3-body motion on Lissajous curve}
Let $m$, $n$ be two coprime integers.
For positive reals $A,B>0$, it is convenient to generalize 
the Lissajous curve (\ref{Lissajous11}) to the following form
\begin{equation} \label{LissajousAB}
L(t)=  
A\sin\bigl(2\pi mt\bigr) 
 +\bI \, B\sin\bigl(2\pi nt\bigr)
 \qquad (t\in\R,\, \bI=\sqrt{-1})
\end{equation}
so as to be inscribed in the rectangle
$\{z\in\C\mid |\mathrm{Re}(z)|=A \text{ or } |\mathrm{Im}(z)|=B\}$.
Note that continuous deformation of the scaling parameters $A,B>0$ does not
affect the topological type of moves of the three points
$a(t)=L(t-\frac13), b(t)=L(t), c(t)=L(t+\frac13)$ $(t\in\R)$
on the complex plane $\C$. 
We shall often call a motion of the three points $a(t),b(t),c(t)\in\C$ 
a {\it 3-body motion}, following customary terminology 
in the 3-body problem of celestial dynamics
(cf.,\,e.g., \cite{M98}, \cite{MM15}). 
If $m$ or $n$ is divisible by 3, then the three points move on
the same vertical or horizontal lines and can be easily seen 
to have double collision at intersections of the Lissajous curve. 
Hereafter we assume $3\nmid m,n$ so that  the cardinality $\# \{a(t),b(t),c(t)\}\ge 2$ for all $t\in\R$. 

In this section, we first consider the case $m\equiv n\equiv 1$ mod $3$.
(This
assumption is in fact not restrictive.
See the paragraph before Definition \ref{LissajousClass}.)
A necessary and sufficient condition for 
$\#\{a(t),b(t),c(t)\}=3$ will also be given 
in Proposition \ref{mainFormula} (i).
Under this situation, we may express the ordered triple $\Delta(t):=(a(t),b(t),c(t))$ as
\begin{equation}
\label{Delta_t}
\Delta(t)=
\begin{pmatrix} a(t) \\ b(t) \\ c(t) \end{pmatrix}
=W 
\begin{pmatrix} 1 & 0 & 0 \\ 0 &\ \eta_1(t) & 0 \\ 0 & 0 & \eta_0(t)  \end{pmatrix}
W^{-1}
\begin{pmatrix} -\frac{\sqrt3}{2}  \\ 0 \\ \frac{\sqrt3}{2}  \end{pmatrix}
,\text{  where  }
 W:=\begin{pmatrix} 1 & 1 & 1 \\ 1 &\omega & \omega^2 \\1&\omega^2&\omega
\end{pmatrix}
\end{equation}
with two complex parameters
\begin{equation}
\label{eta_pairs}
\eta_0(t)=A e^{-2\pi\bI m t} + \bI B  e^{-2\pi\bI n t},
\quad 
\eta_1(t)=A e^{2\pi\bI m t} + \bI B  e^{2\pi\bI n t}
\end{equation}
having periodicity properties 
$\eta_0(t+\frac13)=\eta_0(t) \omega^2$, $\eta_1(t+\frac13)=\eta_1(t) \omega$.
(We shall not distinguish ordered triples of the same entries in either forms
of a column or row vector as long as no confusions occur.)
The above expression is motivated by our previous work \cite{NO1}-\cite{NO2},
in particular generalizes \cite[Example 6.9]{NO2} treating a choreographic motion
of triangles along a figure 8 curve. 
A core idea here (that can be traced back to Schoenberg \cite{Sch50} and others)
is to replace a triangle triple 
$\Delta=(a,b,c)\in\C^3$ with $\# \{a,b,c\}\ge 2$ 
by its Fourier transform 
$$\Psi(\Delta)=(\psi_0(\Delta),\psi_1(\Delta),\psi_2(\Delta))$$ 
with $\psi_i(\Delta)=\frac13(a+b\omega^{-i}+c\omega^i)$ ($i=0,1,2$)
and to introduce the {\it shape function}
\begin{equation}
\label{shapefunction}
\psi(\Delta)=\frac{\psi_2(\Delta)}{\psi_1(\Delta)}=
\frac{ a+b\omega+c\omega^2}{a+b\omega^2+c\omega}
\in \C\cup \{\infty\}.
\end{equation}
Note that 
$\psi(\Delta)= \psi(\Delta')$ if and only if $\Delta \sim \Delta'$, 
where $\sim$ means that $\Delta$ and $\Delta'$ are similar as triangles with ordered vertices. 
More precisely 
we write $\Delta= (a,b,c)\sim \Delta'= (a',b',c')$ if  the vertex $a'$ (resp. $b',c'$)
is obtained from $a$ (resp. $b,c$) by the mapping
$z\mapsto v z+ w $
for some $v, w\in\mathbb{C}$ ($v \ne 0$). 
We observe that the triple $\Delta$ has collision 
(resp. forms a degenerate triangle) if and only if 
$\psi(\Delta)\in\{1,\omega,\omega^2\}$
(resp. $|\psi(\Delta)|=1$).
Since 
$\Psi(\tvect{a}{b}{c})=W^{-1}\tvect{a}{b}{c}$,
we obtain the following simple {\it shape formula}:
\begin{equation}
\label{shapeformula}
\psi(\Delta(t))=\left(\frac{\eta_0(t)}{\eta_1(t)}\right) 
\cdot\psi(\Delta_0)
\qquad (t\in\R),
\end{equation}
where $\Delta_0=(-\frac{\sqrt{3}}{2}, 0, \frac{\sqrt{3}}{2})$.

\begin{Remark}
The above expression (\ref{Delta_t}) interprets 
$\Delta(t)$
as the orbit of the degenerate
triangle $\Delta_0$
by one parameter family of ``generalized cevian operators''
$\cS[\eta_0(t),\eta_1(t)]$ studied in \cite{NO1}-\cite{NO2}.
\end{Remark}

\begin{Remark} 
The initial triangle 
$\Delta(0)=(-\frac{\sqrt{3}}{2}(A+B\bI),0,\frac{\sqrt{3}}{2}(A+B\bI))$
on the Lissajous curve (\ref{LissajousAB})
differs from $\Delta_0$ as a triangle triple, while $\Delta(0)$
and $\Delta_0$ are similar as degenerate isosceles triangles.
After taking the shape function (\ref{shapefunction}), we find
$\psi(\Delta(0))=\psi(\Delta_0)=\rho$. 
The shape curve $\psi(\Delta(t))$ starts from $\psi=\rho$ at $t=0$ and 
ends in $\psi=-1$ at $t=\frac13$.
%
%
\end{Remark}

\subsection{Braid configuration and shape sphere} \label{sec2.2}
Now, let us consider the configuration space
$$
\Conf^3(\C):=\{(x_1,x_2,x_3)\in\C^3\mid x_i\ne x_j (i\ne j)\}
$$
as the space of triangles with labelled vertices.
The Fourier transformation $\Psi$ identifies
$\Conf^3(\C)$ with the space 
of triples $(\psi_0,\psi_1,\psi_2)\in\C^3$
with $\psi_1\ne \omega^i\psi_2$ $(i=0,1,2)$
which, by the projection to the last two coordinates,
has a deformation retract
\begin{equation}
\label{retractionCf}
\Conf^3(\C)
\defret
C(\Psi):=
\left\{
\bvect{\psi_2}{\psi_1}
\in\C^2\mid \psi_1\ne \omega^i\psi_2 (i=0,1,2)
\right\}.
\end{equation}
We shall look closely at the above retraction (\ref{retractionCf}) so as 
to be equivariant under twofold actions of the symmetric group $S_3$ of degree 3
and of the multiplicative group $\mathbf{G}_m(\C)=\C^\times$.

First, it is not difficult to see that 
the action of $S_3$ on $\Conf^3(\C)$ by permutation of entries 
induces a well-defined action of $S_3$ on $C(\Psi)$ via (\ref{retractionCf}).
More explicitly, each permutation $\alpha\in S_3$ 
acts on $C(\Psi)$ in the form
$\bvect{\psi_2}{\psi_1}\mapsto \Omega_\alpha
\bvect{\psi_2}{\psi_1}$, where
$\Omega_\alpha$ $(\alpha\in S_3)$
are given by the following two-by-two
matrices
\begin{alignat}{3}
\label{S_3auto}
\Omega_{id}&=\bmatx{1}{0}{0}{1}, \quad
&\Omega_{(123)}&=\bmatx{\omega}{0}{0}{\omega^2}, \quad 
&\Omega_{(132)}&= \bmatx{\omega^2}{0}{0}{\omega},
\\
\Omega_{(12)}&=\bmatx{0}{\omega}{\omega^2}{0}, 
&\Omega_{(23)}&=\bmatx{0}{1}{1}{0}, 
&\Omega_{(13)}&=\bmatx{0}{\omega^2}{\omega}{0}.
\nonumber
\end{alignat}
Write $C(\Psi)/S_3$ for the quotient space.

Second, the group $\mathbf{G}_m(\C)$ acts on both spaces
$\Conf^3(\C)$ and $C(\Psi)$ by 
simultaneous scalar multiplication on entries
which is equivariant under (\ref{retractionCf}) and
commutes with $S_3$-actions on them.
The quotient space 
$C(\Psi)/(S_3\times\mathbf{G}_m(\C))$
has 
singular points and should be regarded as an orbifold. 
To interpret its structure explicitly, we employ the shape function of triangles
(\ref{shapefunction}) that fits in the mapping
\begin{equation}
\label{C_Psi_shape}
C(\Psi)\ni\bvect{\psi_2(\Delta)}{\psi_1(\Delta)}
\mapsto \psi(\Delta)=\frac{\psi_2(\Delta)}{\psi_1(\Delta)}
\in \mathbf{P}^1_\psi(\C)-\{1,\omega,\omega^2\}
\end{equation}
for all $\Delta\in\Conf^3(\C)$, where
$\mathbf{P}^1_\psi(\C)$ denotes the complex projective line
(Riemann sphere) with coordinate $\psi$.
Since each orbit of  $\mathbf{G}_m(\C)$ on $C(\Psi)$
represents a shape (viz. a similarity class) of triangles, the above mapping 
(\ref{C_Psi_shape}) induces an isomorphism
\begin{equation} 
C(\Psi)/\mathbf{G}_m(\C)  \longisom \mathbf{P}^1_\psi(\C)-\{1,\omega,\omega^2\}.
\end{equation}
The $S_3$-action (\ref{S_3auto}) on $C(\Psi)$ is then
naturally inherited to the automorphism group of 
$\mathbf{P}^1_\psi(\C)-\{1,\omega,\omega^2\}$ and
the orbifold quotient (written by $\mathbf{P}^1_{\infty 23}$)
is well known to have signature `${\infty 23}$' 
indicating one missing point and two elliptic points of order 2 and 3.
We summarize the relations of the above four spaces as in
the following commutative diagram:
\begin{equation}
\label{4spaces}
\begin{tikzcd}[row sep=huge, column sep = huge]
C(\Psi) \arrow[r, "{/\mathbf{G}_m(\C)}"] \arrow[d, "/S_3"] & {\mathbf{P}^1_\psi(\C)-\{1,\omega,\omega^2\}} \arrow[d, "/S_3"] \\
C(\Psi)/S_3 \arrow[r, "{/\mathbf{G}_m(\C)} "]         & \mathbf{P}^1_{\infty 23}.   
\end{tikzcd}
\end{equation}
We view the space $\mathbf{P}^1_\psi(\C)$ as 
a complex analytic model of the {\it shape sphere}
used in the 3-body problem of celestial dynamics
(see, e.g., \cite{M98}, \cite{MM15}).

It is well known (cf. e.g., \cite[Chap.1]{Bi75}) that the quotient space $\Conf_3(\C)=\Conf^3(\C)/S_3$ 
and its deformation retract $C(\Psi)/S_3$ 
have the same fundamental group isomorphic to the braid group $B_3$:
\begin{equation*}
B_3\cong \pi_1(\Conf_3(\C))=\pi_1(C(\Psi)/S_3) .
\end{equation*}
Then, the homomorphism of fundamental groups induced from 
the lower horizontal arrow of (\ref{4spaces})
corresponds to the projection 
$B_3\twoheadrightarrow\mathcal{B}_3=B_3/\la (\sigma_1\sigma_2)^3\ra$.
Accordingly the upper horizontal arrow of (\ref{4spaces}) induces the 
projection of the pure braid group $P_3$ onto the factor group
$P_3/\la (\sigma_1\sigma_2)^3\ra$ isomorphic to 
the free group of rank 2.
In the remainder of this subsection, let us interpret these facts 
in a more precise manner along a collision-free
motion of 3-bodies (not necessarily on a Lissajous curve).
We make use of a groupoid description
of $\mathcal{B}_3$ on the shape sphere
$$
\mathbf{P}^1_\psi(\C)-\{1,\omega,\omega^2\}
=V \cup \coprod_{\alpha\in S_3} \Omega_\alpha(F) ,
$$
where $V:=\{0,\infty,-1,\rho(=-\omega^2),\rho^{-1}(=-\omega)\}$ is the
ramification locus of order 2 and 3 on the shape sphere
$\mathbf{P}^1_\psi(\C)$,
and $F:=\{\psi=re^{\bI\theta}\mid
1<r, 0\le\theta<\frac{2\pi}{3}\}
\cup
\{e^{\bI\theta}\mid 0<\theta<\frac{\pi}{3} \}$ is 
one of the six fundamental domains for 
the $S_3$-action on the unramified region
$\mathbf{P}^1_\psi(\C)\setminus (\{1,\omega,\omega^2\}\cup V)$.

Suppose we are generally given a continuous smooth
family of three distinct points 
\begin{equation}
\label{general_motion}
\Delta(t)=(a(t),b(t),c(t))\in\C^3 \qquad (t_0\le t\le t_1)
\end{equation}
such that
$\{a(t_0),b(t_0),c(t_0)\}=\{a(t_1),b(t_1),c(t_1)\}$
as sets. 
Then, we obtain a curve 
$\gamma_\Delta:=\{\psi(\Delta(t))\}_t$
on the collision-free locus 
$\mathbf{P}^1_\psi(\C)-\{1,\omega,\omega^2\}$
of the shape sphere $\mathbf{P}^1_\psi(\C)$. 
By assumption 
$\Psi(\Delta(t_0))$ and $\Psi(\Delta(t_1))$ 
are equivalent under the action (\ref{S_3auto})
of $S_3$, and hence the image $\bar \gamma_\Delta$
of $\gamma_\Delta$ on 
the orbifold $\mathbf{P}^1_{\infty 23}$
gives a loop
$$
\bar\gamma_\Delta\in
\pi_1^{orb}(\mathbf{P}^1_{\infty 23},\ast)\cong \mathcal{B}_3
$$
based at a common image $\ast$
of $\psi(\Delta(t_0))$ and $\psi(\Delta(t_1))$
on $\mathbf{P}^1_{\infty 23}$.

To illustrate $\bar\gamma_\Delta$ more explicitly, 
we slide and deform
the curve $\gamma_\Delta$ on $\mathbf{P}^1_\psi(\C)-\{1,\omega,\omega^2\}$
so as to start at $\psi(\Delta(t_0))$ near a point in $\{\rho,\rho^{-1},-1\}$,
to lie on the unramified region of 
$\mathbf{P}^1_\psi(\C)-\{1,\omega,\omega^2\}$
and to intersect with all boundaries of $\Omega_\alpha(F)$
($\alpha\in S_3$) transversally.
Moreover, we may assume $\psi(\Delta(t_0))$ to be a point
very close to $\psi=\rho$ in the interior of $F$, 
after replacing $\gamma_\Delta\subset\mathbf{P}^1_\psi(\C)-\{1,\omega,\omega^2\}$ 
by some other lift of $\bar\gamma_\Delta\subset \mathbf{P}^1_{\infty 23}$
if necessary.
This assumption is equivalent to labeling vertices of the 
initial triangle $\Delta(t_0)=(a(t_0),b(t_0),c(t_0))$ so that
its similarity class is very close to that of the limit isosceles triangle
$\displaystyle \lim_{\varepsilon\to 0+}(-1,\varepsilon \bI, 1)$.

Under these assumptions, we understand the orbifold fundamental group 
$\pi_1^{orb}(\mathbf{P}^1_{\infty 23},\ast)$ to be based at a common image of 
inward and outward-facing tangent vectors at $\psi=-1,\rho^{\pm 1}$ on 
$\mathbf{P}^1_\psi(\C)-\{1,\omega,\omega^2\}$ as in 
the right picture of (\ref{groupoid}) below.
The image $\bsigma_i\in \mathcal{B}_3$ of the standard generator 
$\sigma_i\in B_3$ $(i=1,2)$ lifts to six paths connecting these six 
tangential base points (cf. the same picture in loc. cit. 
for a sample lift of $\bsigma_1$ and two sample lifts 
of $\bsigma_2$).

\begin{Remark}
\label{standardS_3}
Upon tracing the curve $\gamma_\Delta=\{\psi(\Delta(t))\}_{t_0\le t\le t_1}$
in the above setting, the assumption that
$\psi(\Delta(t_0))\in F$ on the initial point at $t=t_0$ 
plays a crucial role 
to make useful the standard homomorphism 
$\epsilon: \mathcal{B}_3\to S_3$ with 
$\bsigma_1\mapsto (12)$, $\bsigma_2\mapsto (23)$.
In fact, the location of the end point $\psi(\Delta(t_1))$ at $t=t_1$
of the curve $\gamma_\Delta$ can be determined by the fact
$\psi(\Delta(t_1))\in \Omega_\alpha(F)$
$\Leftrightarrow$
$\epsilon(\bar\gamma_\Delta)=\alpha$.
\end{Remark}

\vspace{-9mm}
\begin{equation} \label{groupoid}
\begin{matrix}
\includegraphics[width=0.37\textwidth]{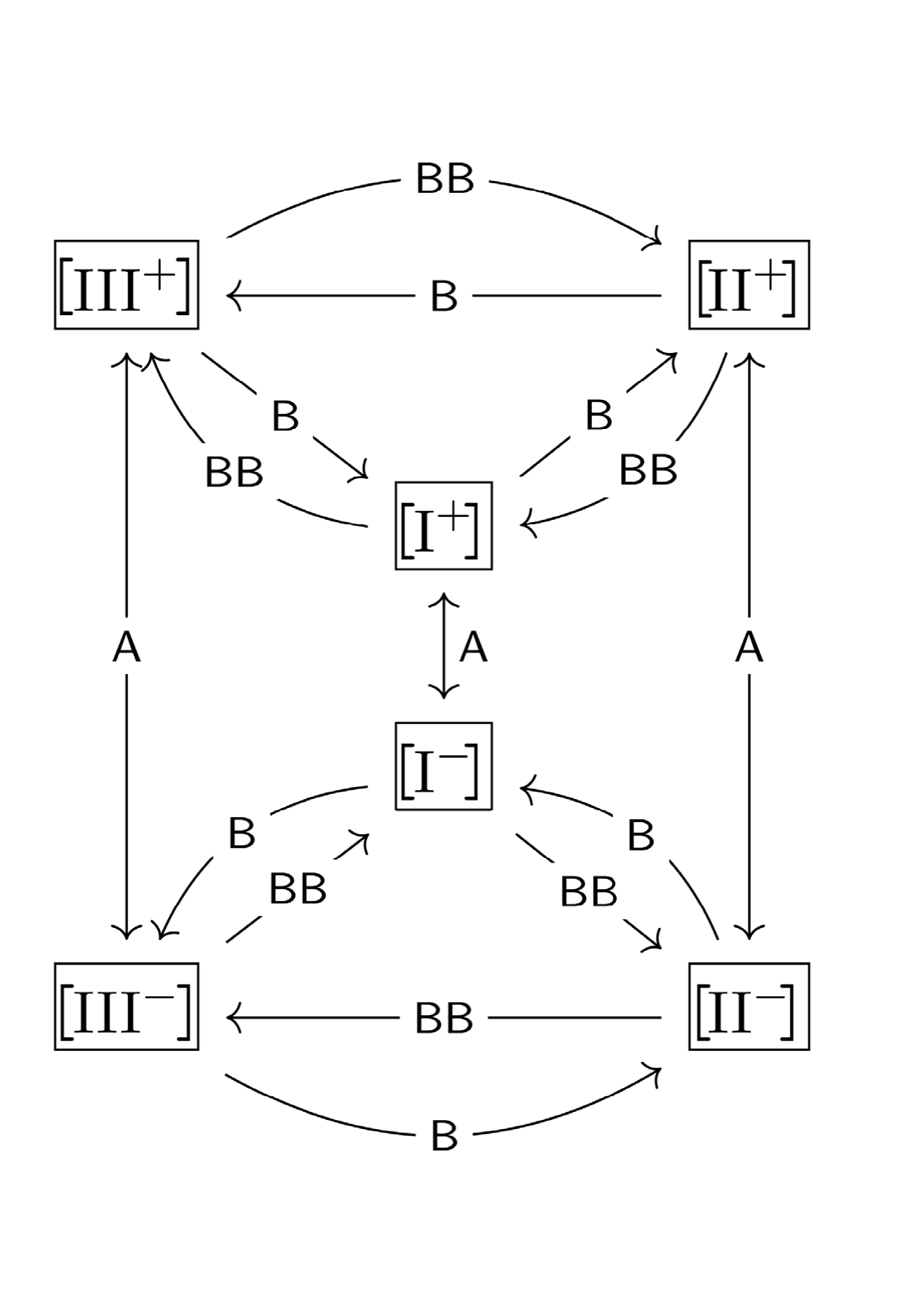}
\quad
\includegraphics[width=0.44\textwidth]{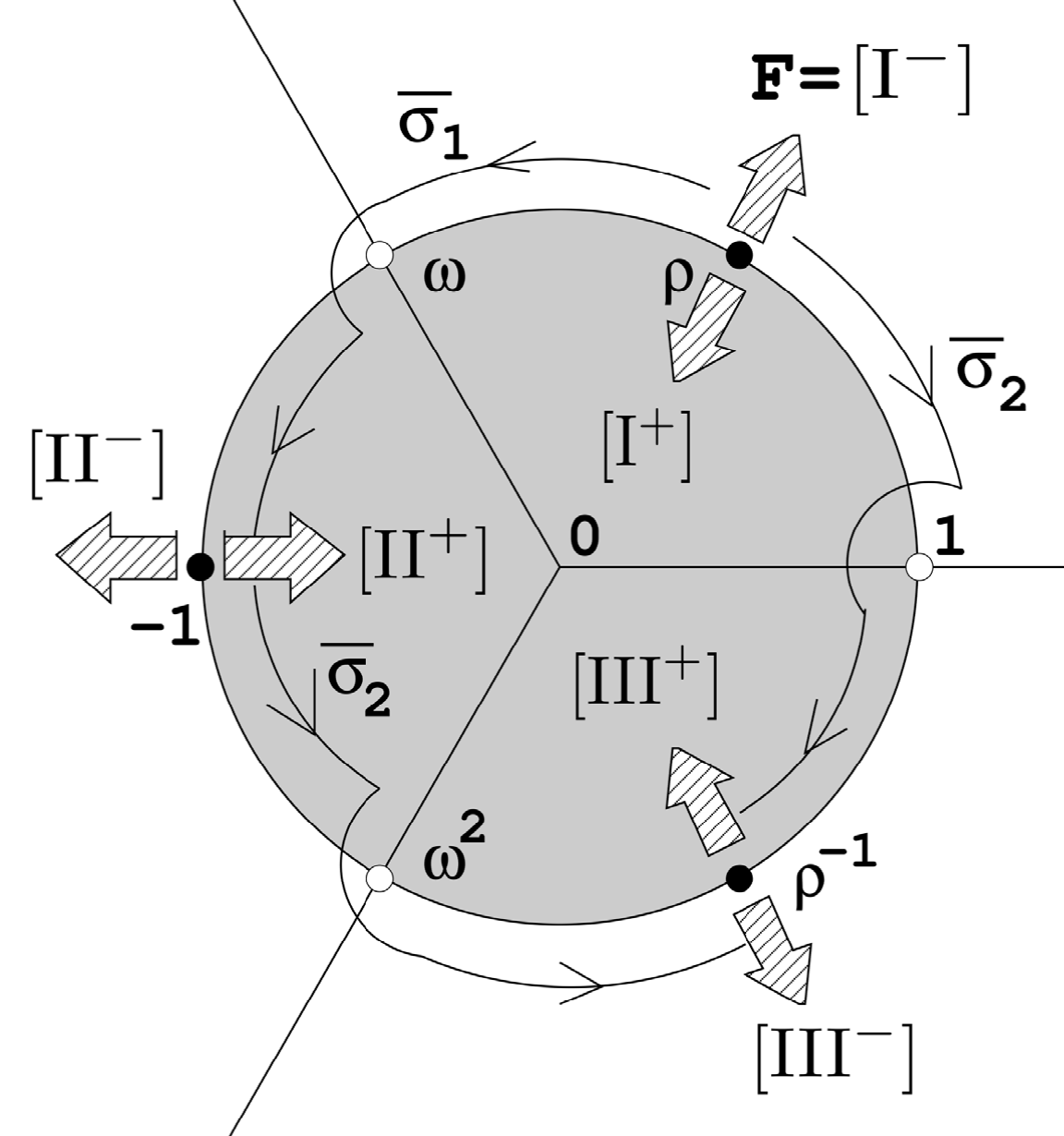}
\end{matrix}
\end{equation}

We then encode the itinerary along the curve $\gamma_\Delta$ 
into a path in the dual graph of $\coprod_{\alpha} \Omega_\alpha(F)$ that is
the left diagram in (\ref{groupoid}),
where each individual directed edge represents a lift
of the corresponding loop in $\mathcal{B}_3$.
We use the following notations:
\begin{enumerate}
\item[$\bullet$]
For the six fundamental domains:
\begin{alignat*}{3}
[{\RN{1}^-}]&:=F, &[{\RN{2}^-}]&:=\Omega_{(132)}(F), \quad
&[{\RN{3}^-}]&:=\Omega_{(123)}(F), \\
[{\RN{1}^+}]&:=\Omega_{(13)}F,\quad
&[{\RN{2}^+}]&:=\Omega_{(12)}(F), \quad
&[{\RN{3}^+}]&:=\Omega_{(23)}(F),
\end{alignat*}
where $[\ast^-]$ are regions contained in $\{|\psi|\ge 1\}$
and $[\ast^+]$ are regions contained in  $\{|\psi|\le 1\}$.
Note that 
$[\RN{1}^\pm]=\omega^{-1}[\RN{2}^\pm]
=\omega^{-2}[\RN{3}^\pm]$ 
and that the closures of $[\ast^+]$ and $[{\ast}^-]$ share 
one third of the circle $\{|\psi|=1\}$ for 
$\ast=\RN{1},\RN{2},\RN{3}$.

\item[$\bullet$]
$\sA$ (resp. $\sB$) denotes the image of 
$\sigma_1\sigma_2\sigma_1\in B_3$
(resp. $\sigma_1\sigma_2\in B_3$) in $\mathcal{B}_3$.
Note that $\sA^2=\sB^3=1$ and $\sA,\sB$ generate the whole
group: $\mathcal{B}_3 \cong \PSL_2(\Z)=\la \sA\ra\ast \la \sB\ra$.
\end{enumerate}

\begin{Definition}
On the shape sphere $\mathbf{P}^1_\psi(\C)$,
the circle $\{|\psi|=1\}$) will be called the {\it equator},
and the interior of the union $[\RN{1}^+]\cup[\RN{2}^+]\cup[\RN{3}^+]$
(resp. of $[\RN{1}^-]\cup[\RN{2}^-]\cup[\RN{3}^-]$)
will be called the {\it positive (negative) hemisphere}.
We call the union of boundaries between two of the three regions 
$[{\RN{1}^+}]$, 
$[{\RN{2}^+}]$ and 
$[{\RN{3}^+}]$ 
(resp. of $[{\RN{1}^-}]$, $[{\RN{2}^-}]$ and $[{\RN{3}^-}]$) 
minus the equator points {\it a positive (resp. negative) vertical border}.
We say {\it vertical borders} to mean the collection of positive and negative vertical borders.
\end{Definition}

\subsection{Computing Lissajous loci}
Now let us apply the tools in \S \ref{sec2.2} 
to compute the braid in $\mathcal{B}_3$ that represents
the Lissajous motion of $\Delta(t)$ given in (\ref{Delta_t}).
We shall trace the curve on the shape sphere
$\mathbf{P}^1_\psi(\C)$ drawn by the moving point
$\psi(\Delta(t))$ from $t_0=0$ to $t_1=\frac13$.

\begin{Proposition}[Main Formula] \label{mainFormula}
Let $m,n$ be coprime integers such that
$m\equiv n\equiv 1$ mod $3$, and set $\ell:=(m-n)/3$.
Let $\{\Delta(t)\}_{t\in\R}$ be the one-parameter family of 3-bodies
on the Lissajous curve of type $(m,n)$ defined in
(\ref{Delta_t}).
\begin{roster}
\item[(i)] The one-parameter family $\{\Delta(t)\}_{t}$ is collision-free
(i.e. $\#\{a(t),b(t),c(t)\}=3$)
if and only if $\ell$ is odd.

\item[(ii)]
Assume $\ell$ is odd. 
Define a sequence $(\eps_1',\dots,\eps_{2|m|}')\in\{0,1\}^{2|m|}$ by
$$
\eps_k'\equiv\left\lfloor
\frac{|\ell|}{|m|}k-\frac{|\ell|}{2|m|}
\right\rfloor
\mod 2
\qquad (1\le k\le 2|m|)
$$
and set $\eps_k:=\sgn(m\ell)(2\eps'_k-1)\in\{\pm 1\}$
for $k=1,\dots,2|m|$, where $\mathrm{sgn}:\Z\setminus\{0\}\to \{\pm 1\}$
is the sign map.
Then, the 3-braid derived from the Lissajous 3-body motion 
$\{\Delta(t)\}_{0^\mp\le t\le \frac13{}^\mp}$ in $\mathcal{B}_3$ is
equal to
$$
\sW_{m,n}:=
\sA^{\frac{1-\sgn(m)\eps_1}{2}}\cdot
\sB^{\eps_1}\sA^{\frac{\eps_1-\eps_2}{2}}\sB^{\eps_2}\sA^{\frac{\eps_2-\eps_3}{2}}
\cdots \sB^{\eps_{2|m|}} \cdot
\sA^{\frac{1-\sgn(m)\eps_{2|m|}}{2}}.
$$
Here, denoting by $x^-$ (resp. $x^+$) a real number infinitesimally 
smaller (resp. larger) than $x$, 
we understand $x^\mp$ to be $x^-$ or $x^+$
according to whether $\ell>0$ or $\ell<0$ respectively.

\item[(iii)] The shape curve $\{\psi(\Delta(t))\}_{t\in\R}$ is periodic
with 
$\psi(\Delta(t+\frac13))=\omega\cdot\psi(\Delta(t))$ for all $t\in\R$.
In particular, notations being as in (ii),
the curve 
$\{\psi(\Delta(t))\}_{0^\mp\le t\le\frac13{}^\mp}$
starts at 
$\psi(\Delta(0^\mp))\in F=[\RN{1}^{-}]$ 
and ends in 
$\psi(\Delta(\frac13{}^\mp))\in[\RN{2}^{-}]$.

\item[(iv)]
Let $\epsilon: \mathcal{B}_3\to S_3$ be the standard 
homomorphism with $\bsigma_1\mapsto (12)$,
$\bsigma_2\mapsto (23)$. 
Then $\epsilon(\sW_{m,n})=(132)$.
\end{roster}
\end{Proposition}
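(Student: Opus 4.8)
The plan is to work entirely on the shape sphere $\mathbf{P}^1_\psi(\C)$ by way of the shape formula $(\ref{shapeformula})$: setting $\zeta(t):=\eta(t)/\eta'(t)$ one has $\psi(\Delta(t))=\rho\,\zeta(t)$, and the opening move is to rewrite $\zeta$ so that its two competing frequencies become visible. Factoring $e^{\mp 2\pi imt}$ out of the two expressions in $(\ref{eta_pairs})$ and using $m-n=3\ell$ gives
\begin{equation*}
\eta(t)=e^{-2\pi imt}\bigl(A+iB\,e^{6\pi i\ell t}\bigr),\qquad
\eta'(t)=e^{2\pi imt}\bigl(A+iB\,e^{-6\pi i\ell t}\bigr),
\end{equation*}
whence
\begin{equation*}
\psi(\Delta(t))=\rho\,e^{-4\pi imt}\cdot\frac{A+iB\,e^{6\pi i\ell t}}{A+iB\,e^{-6\pi i\ell t}}\qquad(t\in\R).
\end{equation*}
From here I would settle the four assertions in the order (iii), (i), (ii), (iv): the first two are short consequences of this formula, and (iv) is a one-line consequence of (ii) and (iii).

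For (iii): since $m\equiv 1\bmod 3$ we have $e^{-4\pi im/3}=\omega$, while $e^{6\pi i\ell(t+1/3)}=e^{6\pi i\ell t}$, so replacing $t$ by $t+\tfrac13$ leaves the fraction untouched and multiplies the outer exponential by $\omega$, giving $\psi(\Delta(t+\tfrac13))=\omega\,\psi(\Delta(t))$. For the endpoints, $\zeta(0)=1$, so $\psi(\Delta(0))=\rho$; and $|\psi(\Delta(t))|^2$ has derivative $-24\pi AB\ell/(A^2+B^2)$ at $t=0$, a negative multiple of $\ell$, so with the sign convention built into ``$0^\mp$'' the point $\psi(\Delta(0^\mp))$ lies in the negative hemisphere, and a direct local check --- harmless to carry out for any convenient $A,B>0$, the outcome being a topological invariant independent of $A,B$ --- places it in the interior of $F=[\RN{1}^-]$; the exit region is then forced by periodicity, $\psi(\Delta(\tfrac13{}^\mp))=\omega\,\psi(\Delta(0^\mp))\in\omega F=[\RN{2}^-]$. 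For (i): a collision means $\psi\in\{1,\omega,\omega^2\}$, hence $|\psi|=1$, which by the displayed formula forces $|A+iB\,e^{6\pi i\ell t}|=|A+iB\,e^{-6\pi i\ell t}|$, i.e.\ $\sin(6\pi\ell t)=0$, that is $t=\tfrac{j}{6\ell}$ for some $j\in\Z$; at such $t$ the numerator and denominator of the fraction both equal $A+iB(-1)^j$, so $\psi(\Delta(\tfrac{j}{6\ell}))=\rho\,e^{-2\pi imj/(3\ell)}$, and this lies in $\{1,\omega,\omega^2\}=\{\rho^0,\rho^2,\rho^4\}$ for some $j$ if and only if $\tfrac{2mj}{\ell}$ is an odd integer for some $j$. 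Since $\gcd(m,n)=1$ gives $\gcd(m,\ell)=1$, integrality forces $\ell\mid 2j$; if $\ell$ is odd this means $\ell\mid j$, so $\tfrac{2mj}{\ell}$ is even and there is no collision, whereas if $\ell$ is even then $m\equiv n\bmod 6$, so $m$ and $n$ share a parity and, being coprime, are both odd, and then $j=\ell/2$ gives $\tfrac{2mj}{\ell}=m$, odd --- a collision. This is exactly assertion (i).

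The substance of the proposition is (ii). With the collision-free curve $\gamma_\Delta=\{\psi(\Delta(t))\}_{0^\mp\le t\le\frac13{}^\mp}$ in hand, I would compute $\bar\gamma_\Delta\in\pi_1^{orb}(\mathbf{P}^1_{\infty 23},\ast)\cong\mathcal{B}_3$ by recording the ordered list of fundamental domains $[\ast^\pm]$ that $\gamma_\Delta$ traverses and translating it into $\sA,\sB$ via the groupoid dictionary of $(\ref{groupoid})$ --- where, schematically, each crossing of the equator contributes an $\sA$ and each crossing of a vertical border contributes a $\sB^{\pm1}$. The displayed formula presents $\psi(\Delta(t))$ as an angular motion at rate $4\pi m$ (the factor $e^{-4\pi imt}$, whose $2|m|$ vertical-border crossings over $t\in[0,\tfrac13]$, one per $2\pi/3$ of phase, produce the $2|m|$ factors $\sB^{\pm1}$) modulated by $\tfrac{A+iB\,e^{6\pi i\ell t}}{A+iB\,e^{-6\pi i\ell t}}$, whose modulus oscillates across $1$ at rate $6\pi\ell$ (the equator crossings at $t=\tfrac{j}{6\ell}$ found above, each flipping the hemisphere, hence controlling the placement of the $\sA$'s). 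The $k$-th of these crossings occurs at a time $t_k$ with $6|m|\,t_k$ near $k-\tfrac12$, so the hemisphere of the arc it terminates is governed by the sign of $\sin\!\bigl(\pi|\ell|(k-\tfrac12)/|m|\bigr)$, that is by the parity of $\bigl\lfloor\tfrac{|\ell|}{|m|}k-\tfrac{|\ell|}{2|m|}\bigr\rfloor=\eps_k'$; tracking orientations turns this into the $k$-th $\sB$-factor $\sB^{\eps_k}$ with $\eps_k=\sgn(m\ell)(2\eps_k'-1)$. An $\sA$ appears between the $k$-th and $(k{+}1)$-st of these $\sB$'s exactly when the two crossings lie in opposite hemispheres, i.e.\ when $\eps_k\ne\eps_{k+1}$ (exponent $\tfrac{\eps_k-\eps_{k+1}}{2}$), while the two terminal exponents $\tfrac{1-\sgn(m)\eps_1}{2}$ and $\tfrac{1-\sgn(m)\eps_{2|m|}}{2}$ are read off from the way $\gamma_\Delta$ emanates from the order-$2$ ramification point $\rho$ into $F=[\RN{1}^-]$ and lands at $-1$ coming out of $[\RN{2}^-]$. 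Assembling these pieces yields $\bar\gamma_\Delta=\sW_{m,n}$. Assertion (iv) then follows immediately: by Remark \ref{standardS_3}, for a curve starting in $F$ the permutation $\epsilon(\bar\gamma_\Delta)$ is the unique $\alpha\in S_3$ with $\psi(\Delta(\tfrac13{}^\mp))\in\Omega_\alpha(F)$, and by (iii) this endpoint lies in $[\RN{2}^-]=\Omega_{(132)}(F)$, whence $\epsilon(\sW_{m,n})=(132)$.

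The step I expect to be the genuine obstacle is the bookkeeping inside (ii): proving carefully that the interleaving of the rate-$4\pi m$ vertical-border crossings with the rate-$6\pi\ell$ equator crossings, together with the correct treatment of the two terminal ramification points $\rho$ and $-1$, reproduces exactly the shifted mechanical (Christoffel) word $\eps_k'$ and the prescribed pattern of $\sA$-exponents. This is a finite but delicate case analysis steered by the picture $(\ref{groupoid})$; by contrast, (i), (iii) and (iv) come out as comparatively direct corollaries of the shape formula.
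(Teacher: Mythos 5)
Your proposal is correct and follows essentially the same route as the paper: the same factorization $\psi(\Delta(t))=\rho\,e^{-4\pi i mt}\cdot\frac{A+iBe^{6\pi i\ell t}}{A+iBe^{-6\pi i\ell t}}$, the same reading of collisions from the equator times $t\in\frac{\Z}{6\ell}$, and the same scheme for (ii) of letting the dominant rotation at rate $4\pi m$ produce the $2|m|$ vertical-border crossings (the $\sB^{\eps_k}$) while the modulus oscillation at rate $6\pi\ell$ governs the hemisphere of $\psi(\Delta(t_k))$ via the parity of $\lfloor\frac{|\ell|}{|m|}k-\frac{|\ell|}{2|m|}\rfloor$, with the terminal $\sA$-exponents determined by comparing hemispheres at the endpoints. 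The bookkeeping you defer in (ii) is carried out in the paper only at a comparable level of detail (via the choice $A\gg B$ making the oscillation amplitude smaller than the minimal gap $\mu$ to the collision points, and a traversal of the dual graph), so no essential idea is missing.
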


\begin{proof}
Since $\psi(\Delta(0))=\rho$, the shape formula 
(\ref{shapeformula}) tells us that
\begin{equation}
\label{oscillation}
\psi(\Delta(t))=\rho\cdot
\frac{Ae^{-2\pi\bI mt}+\bI B e^{-2\pi\bI nt}}{Ae^{2\pi\bI mt}+\bI B e^{2\pi\bI nt}}  
=\rho\cdot e^{-4\pi\bI mt}
\left(
\frac{1+(\bI B/A) e^{6\pi\bI \ell t}}{1+(\bI B/A) e^{-6\pi\bI \ell t}}
\right)
.
\end{equation}
As remarked above, changing ratios of vertical and horizontal
scaling of the Lissajous curve (\ref{LissajousAB})
does not affect topological behavior of three points.
Let us assume $A \gg B>0$ so that, in (\ref{oscillation}), 
the factor $\rho\cdot e^{-4\pi\bI mt}$ 
dominates major rotation of the point $\psi(\Delta(t))$ with
oscillated by a small amplitude from 
the second fractional factor.

(i) It follows from (\ref{oscillation})
that the point $\psi(\Delta(t))$ is on the equator if and only if
$\left|
\frac{1+(\bI B/A) e^{6\pi\bI \ell t}}{1+(\bI B/A) e^{-6\pi\bI \ell t}}
\right|$ equals $1$; this happens when $t\in \frac{\Z}{6|\ell|}$. 
It follows that a collision occurs when
$\psi(\Delta(\frac{k}{6|\ell|}))=1,\omega,\omega^2$, for some $k\in\Z$, 
or equivalently, $\frac{4\pi m}{6|\ell|} k=\frac{2\pi}{6}+\frac{2\pi}{3} h$
has a solution $(h,k)\in\Z^2$. Noting that $m$ and $\ell$ are coprime, 
we see that a collision occurs iff $\ell$ is an even integer.
 
(ii)
We look at the right most expression in (\ref{oscillation}) more closely.
The curve $\{\psi(\Delta(t))\}_{0\le t\le\frac13}$ starts from $\rho$,
ends in $-1$, and never hits any collision points
$1,\omega,\omega^2$ by (i). 
This enables us to define $\mu(>0)$ to be the minimum of arc lengths 
between any point of $\{\psi(\Delta(\frac{k}{6|\ell|}))\mid k=1,\dots,2|m|\}$ 
and any one of $1,\omega,\omega^2$ on the equator $|\psi|=1$.
Pick the scaling ratio $B/A$ in (\ref{oscillation}) to be sufficiently small so that
$\mu_0:=
\max_t\left|\mathrm{arg}\frac{1+(\bI B/A) e^{6\pi\bI \ell t}}{1+(\bI B/A) e^{-6\pi\bI \ell t}}
\right|$ is smaller than $\mu$. 
Now, the main factor $\rho\cdot e^{-4\pi\bI mt}$ 
of $\psi(\Delta(t))$ hits a collision point $\in\{1,\omega,\omega^2\}$ 
exactly when $t=t_k:=\frac{1}{12|m|}+\frac{1}{6|m|}(k-1)$
($k=1,\dots,2|m|$) accompanied nearby with the
point $\psi(\Delta(t_k))$ 
whose argument differs from that of $\rho\cdot e^{-4\pi\bI t_k}$ 
at most $\mu_0$. Consequently, if $|\psi(\Delta(t_k))|<1$
(resp. $|\psi(\Delta(t_k))|>1$), then the curve 
$\psi(\Delta(t))$ on the segment 
$t\in [t_k-\frac{\mu_0}{4\pi |m|}, t_k+\frac{\mu_0}{4\pi |m|}]$
intersects transversally exactly once
with the positive vertical border (resp. negative vertical border).
The sequence $\{\eps_k'\}_k$ counts the cases
$|\psi(\Delta(t_k))|<1$ or $|\psi(\Delta(t_k))|>1$ 
according to whether 
$(-1)^{\eps_k'}\cdot \ell>0$ or $<0$ respectively (\#).
Finally, $\psi(\Delta(t))$ crosses over
the equator when $\eps_k\in\{\pm 1\}$ changes, 
and if $m>0$, then it 
travels in horizontal direction along
$[\RN{3}^\pm]\to [\RN{2}^\pm]\to
[\RN{1}^\pm]\to[\RN{3}^\pm]$
(and if $m<0$ in the opposite direction).
The choice of our starting time at $t=0^\mp$ formats
our curve $\psi(\Delta(t))$ to start from a point in $F=[\RN{1}^-]$ 
infinitesimally near to the point $\rho$.
(It goes toward the inside or outside of the equator $|\psi|=1$ 
according as $\sgn(\ell)=1, -1$ respectively, as seen from
the last factor of (\ref{oscillation}).)
This allows us to apply the procedure 
illustrated after Remark \ref{standardS_3} 
to retrieve the braid words corresponding to 
the shape curve $\{\psi(\Delta(t))\}_{0^{\mp}<t<{\frac13}^\mp}$:
%
We can now trace the orbit of the shape curve on the dual diagram
(\ref{groupoid}) from the vertex $[{\RN{1}^-}]$,
where edge labels ($\sA$, $\sB$ and $\sB\sB=\sB^{-1}$)
correspond to elements of $\mathcal{B}_3$ to be added upon
every chance of crossing a boundary from one 
region to another.
This enables us to interpret the main part
of the asserted formula
of $\sW_{m,n}$ except for factors at both ends:
The effect of the first and last factors $\sA^{(1\pm 1)/2}$ of $\sW_{m,n}$ 
counts whether $\psi(\Delta(t_k))$ ($k=1, 2|m|$) lies 
on the negative or positive hemisphere, which is
the same or opposite hemisphere as 
$\psi(\Delta(t))$ ($t=0^\mp,\frac{1}{3}{}^\mp$).
For example, according to the above argument (\#),
$\psi(\Delta(t_1))$ and $\psi(\Delta(0^\mp))$
lie on the same ($=$negative) hemisphere iff 
$(-1)^{\eps_1'}\sgn(\ell)=-1$.
This is equivalent to $\sgn(m)\eps_1=1$, for
$\eps_1=\sgn(m\ell)(-1)^{\eps_1'+1}$.
The asserted formula is thus concluded.

(iii) It follows from 
(\ref{eta_pairs}) and 
the assumption $m\equiv n\equiv 1$ mod $3$
that $\eta_0(t+\frac13)=\omega^{-1}\eta_0(t)$ and
$\eta_1(t+\frac13)=\omega\,\eta_1(t)$ hold for all $t\in\R$.
Therefore, from the shape formula (\ref{shapeformula}),
we see that $\psi(\Delta(t+\frac13))=\omega\,\psi(\Delta(t))$.
The second assertion is a consequence of this identity at $t=0^\mp$.

(iv) By (iii), the shape curve corresponding to $\sW_{m,n}$
starts at a point in $F=[\RN{1}^{-}]$ 
and terminates at a point $\in[\RN{2}^{-}]$.
The asserted identity is a consequence of 
Remark \ref{standardS_3} and the subsequent 
setting of the region $[\RN{2}^-]=\Omega_{(132)}(F)$.
\end{proof}

\begin{Example} 
\label{ex:m=1}
Here are two examples where $m=1$ (cf. Figure \ref{Exm=1}):
Let $(m,n)=(1,-2)$ so that $\ell=(m-n)/3=1$.
We have $(\eps'_1,\eps'_2)=(0,1)$ and $(\eps_1,\eps_2)=(-1,1)$.
Then the shape curve on $t\in [0^-,\frac13{}^-]$ moves along
$[\RN{1}^-] \to [\RN{1}^+] \to [\RN{3}^+] \to
[\RN{3}^-] \to [\RN{2}^-] $ and produces a 3-braid
$\sA\sB\sB\sA\sB\in\mathcal{B}_3$. 
Suppose next $(m,n)=(1,4)$ so that $\ell=(m-n)/3=-1$.
We have $(\eps'_1,\eps'_2)=(0,1)$ and $(\eps_1,\eps_2)=(1,-1)$.
Then the shape curve on $t\in [0^+,\frac13{}^+]$ moves along
$[\RN{1}^-] \to [\RN{3}^-] \to [\RN{3}^+] \to 
[\RN{2}^+]  \to [\RN{2}^-]$ and produces a 3-braid
$\sB\sA\sB\sB\sA\in\mathcal{B}_3$. 
\end{Example}

\begin{figure}[h]
\begin{center}
\begin{tabular}{cc}
\includegraphics[width=0.35\textwidth]{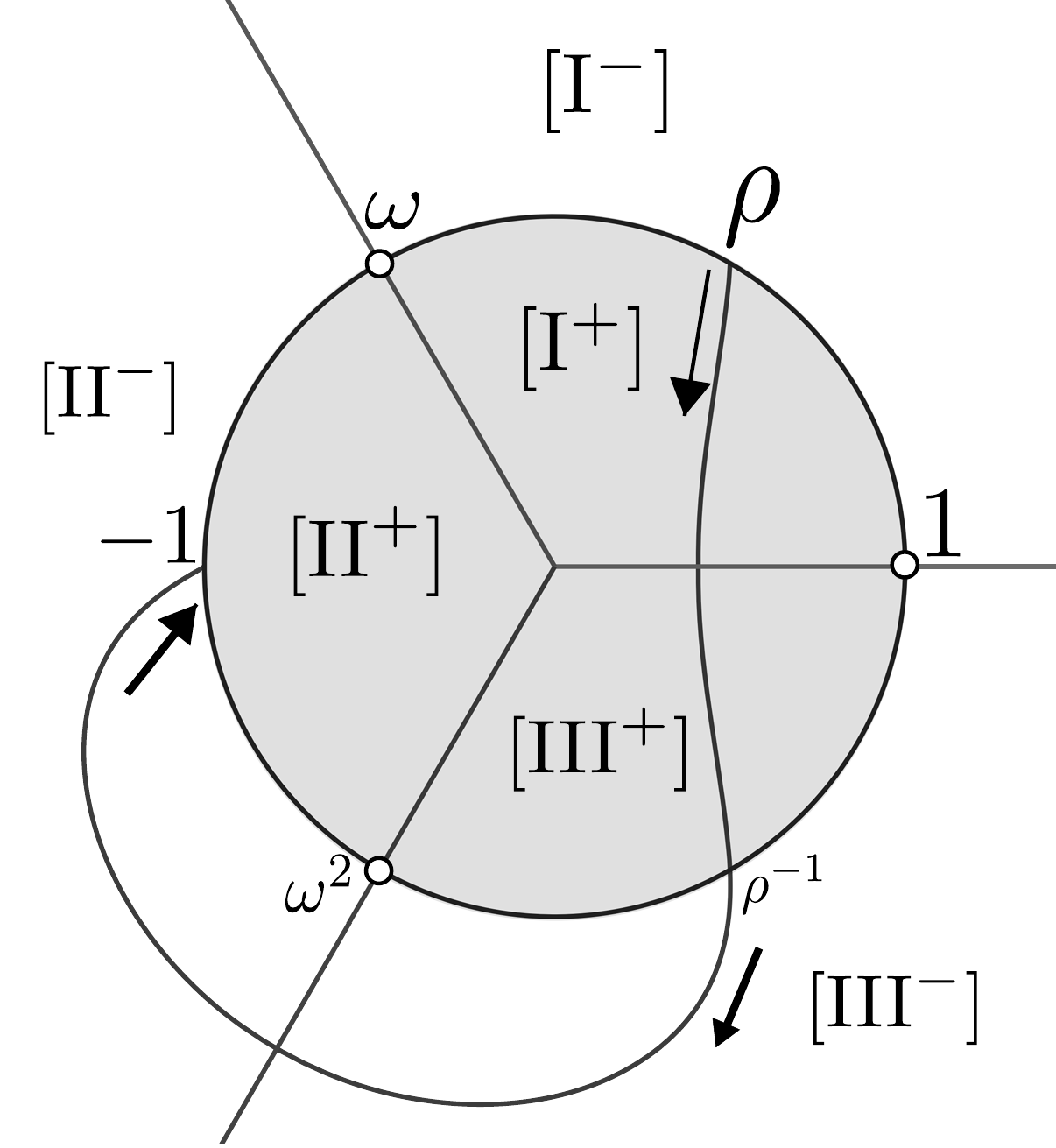} $\quad$ &
\includegraphics[width=0.35\textwidth]{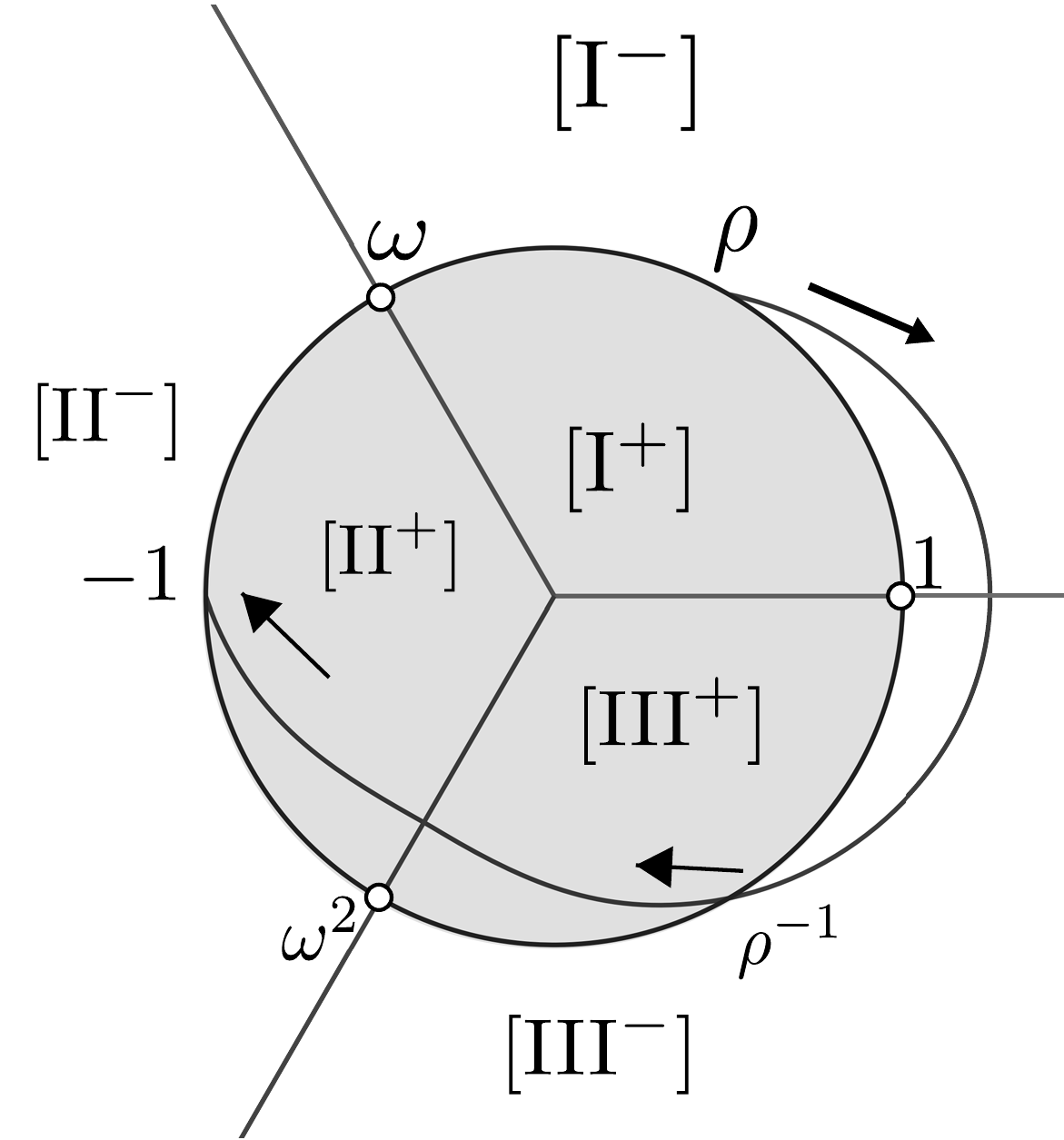}
\end{tabular}
\end{center}
\caption{Shape curves
$\{\psi(\Delta(t))\}_{0\le t\le\frac13}$
in the cases $(m,n)=(1,-2)$ [left], $(1,4)$ [right].}
\label{Exm=1}
\end{figure}

\begin{Example} 
\label{ex:m=-2}
Two examples where $m=-2$ (cf. Figure \ref{Exm=-2}): 
Suppose first $(m,n)=(-2,1)$ so that $\ell=(m-n)/3=-1$.
We have
$(\eps'_1,\dots,\eps'_4)=(0,0,1,1)$ and $(\eps_1,\dots,\eps_4)=(1,1,-1,-1)$.
Then the shape curve on $t\in [0^+,\frac13{}^+]$ moves along
$[\RN{1}^-] \to [\RN{2}^-] \to [\RN{3}^-] 
\to [\RN{3}^+] \to [\RN{1}^+]\to [\RN{2}^+] \to[\RN{2}^-] $ 
and produces a 3-braid
$\sB\sB\sB\sB\sA\sB\sB\sA\in\mathcal{B}_3$. 
Next, suppose  $(m,n)=(-2,-5)$ so that $\ell=(m-n)/3=1$.
We have
$(\eps'_1,\dots,\eps'_4)=(0,0,1,1)$ and $(\eps_1,\dots,\eps_4)=(-1,-1,1,1)$.
Then the shape curve on $t\in [0^-,{\frac13}^-]$ moves along
$[\RN{1}^-] \to [\RN{1}^+] \to [\RN{2}^+] \to [\RN{3}^+] \to 
 [\RN{3}^-] \to [\RN{1}^-] \to
[\RN{2}^-]$ and produces a 3-braid
$\sA\sB\sB\sA\sB\sB\sB\sB\in\mathcal{B}_3$. 
\end{Example}

\begin{figure}[h]
\begin{center}
\begin{tabular}{cc}
\includegraphics[width=0.35\textwidth]{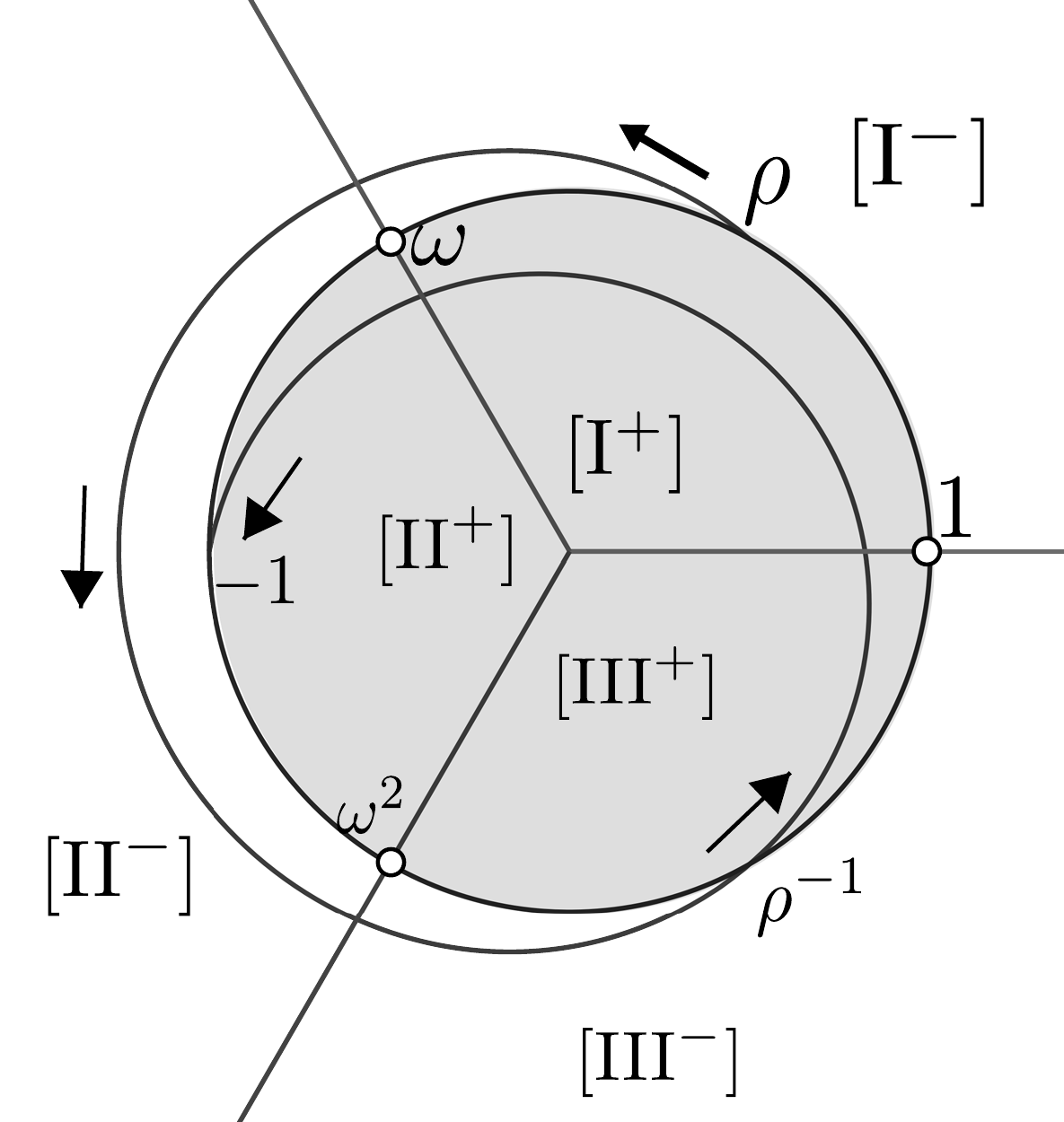} $\quad$&
\includegraphics[width=0.35\textwidth]{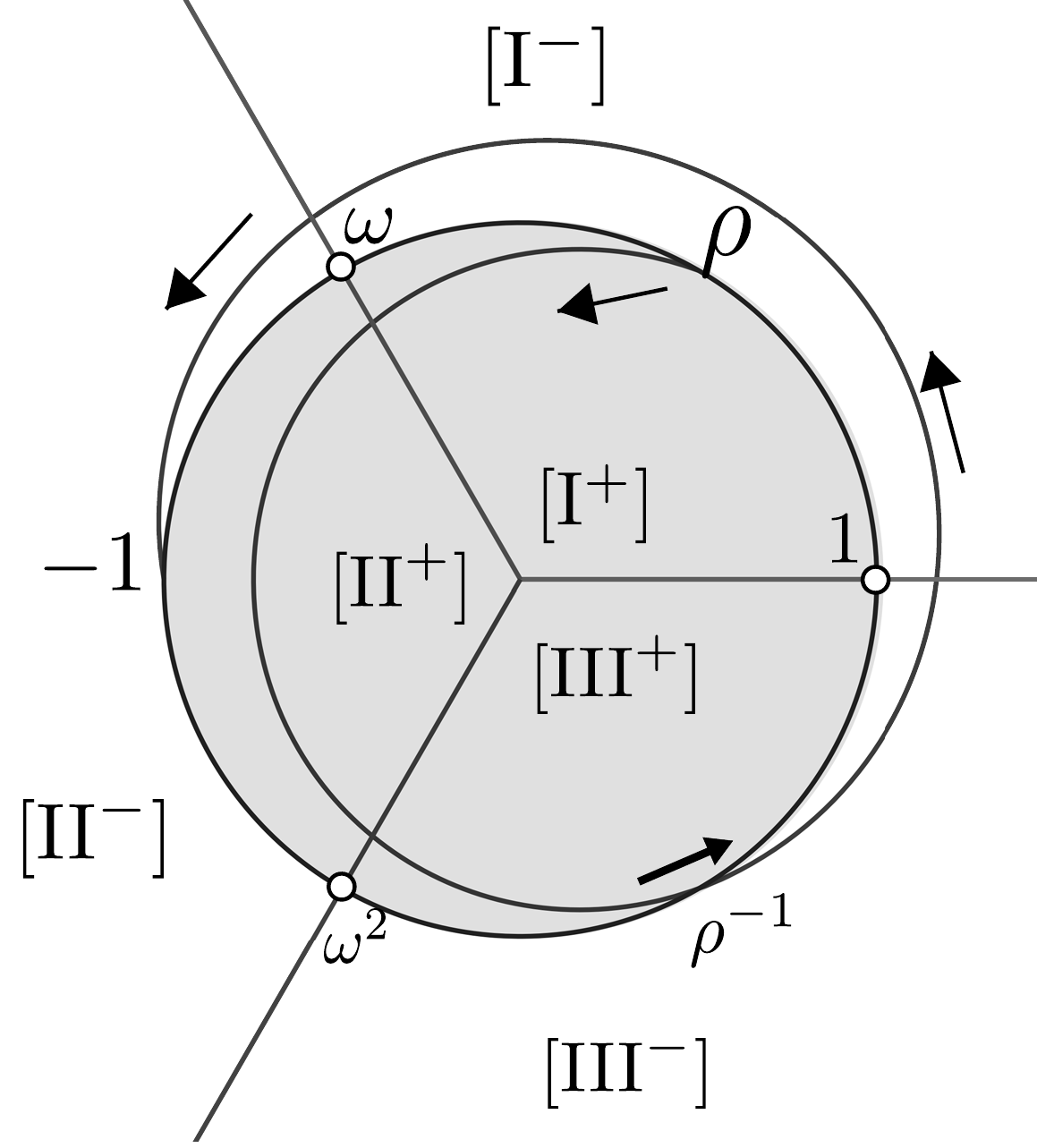}
\end{tabular}
\end{center}
\caption{Shape curves $\{\psi(\Delta(t))\}_{0\le t\le\frac13}$
in the cases $(m,n)=(-2,1)$[left], $(-2,-5)$[right].}
\label{Exm=-2}
\end{figure}

\subsection{Symmetry of Lissajous motions}
\label{subsec.symmetry}
A remarkable consequence of the Main Formula (Proposition \ref{mainFormula}) 
is the following symmetric property of Lissajous 3-braids.
Once again we assume $m\equiv n\equiv 1\mod 3$ with $\ell:=(m-n)/3$
being odd, as in the previous section.

\begin{Proposition}[Double palindromicity]
\label{doublePal}
The word $\sW_{m,n} (\in \mathcal{B}_3)$ in Proposition \ref{mainFormula}
decomposes into the form
$$
\sW_{m,n}=
\sA^{\frac{1-\sgn(m)\eps_1}{2}}\cdot
\left(\sB^{\eps_1}\sA^{\frac{\eps_1-\eps_2}{2}}
\cdots \sB^{\eps_{|m|}}\right)
\sA 
\left(\sB^{\eps_{|m|+1}}
\sA^{\frac{\eps_{|m|+1}-\eps_{|m|+2}}{2}}
\cdots \sB^{\eps_{2|m|}}\right) \cdot
\sA^{\frac{1+\sgn(m)\eps_1}{2}},
$$
where the sequences 
$(\eps_1,\dots,\eps_{|m|})$ and $(\eps_{|m|+1},\dots,\eps_{2|m|}) \in\{\pm 1\}^{|m|}$
are both palindromic, i.e.,
$\eps_k=\eps_{|m|+1-k}$, $\eps_{|m|+k}=\eps_{2|m|+1-k}$ 
and have opposite signs 
$\eps_{k}\eps_{|m|+k}=-1$
$(1\le k\le|m|)$.
In particular, we have a decomposition of the form 
$\sW_{m,n}=\sH_{m,n}\sA\sH_{m,n}^{-1}\sA^{-1}$, where
$$
\sH_{m,n}:=
\sA^{\frac{1-\sgn(m)\eps_1}{2}}
\cdot
\left(\sB^{\eps_1}
\sA^{\frac{\eps_1-\eps_2}{2}}
\cdots \sA^{\frac{\eps_{|m|-1}-\eps_{|m|}}{2}}
\sB^{\eps_{|m|}}
\right)\cdot
\sA^{\frac{1-\sgn(m)\eps_1}{2}}.
$$
\end{Proposition}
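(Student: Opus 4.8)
The plan is to derive this purely combinatorially from the Main Formula (Proposition \ref{mainFormula}), using the explicit definition of the sequence $\eps'_k$ together with the arithmetic symmetry $k\mapsto 2|m|+1-k$. The heart of the matter is the palindromicity claim $\eps_k=\eps_{|m|+1-k}$ and $\eps_{|m|+k}=\eps_{2|m|+1-k}$, and the sign-flip $\eps_k\eps_{|m|+k}=-1$; once these are in hand, the decomposition of $\sW_{m,n}$ and the factorization $\sW_{m,n}=\sH_{m,n}\sA\sH_{m,n}^{-1}\sA^{-1}$ are a matter of bookkeeping with exponents.

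First I would establish the two key index symmetries at the level of the auxiliary binary sequence $\eps'_k\equiv\lfloor \frac{|\ell|}{|m|}k-\frac{|\ell|}{2|m|}\rfloor \bmod 2$. Write $f(k):=\frac{|\ell|}{|m|}k-\frac{|\ell|}{2|m|}=\frac{|\ell|}{|m|}\bigl(k-\tfrac12\bigr)$, so that $\eps'_k\equiv\lfloor f(k)\rfloor$. The substitution $k\mapsto 2|m|+1-k$ gives $f(2|m|+1-k)=\frac{|\ell|}{|m|}\bigl(2|m|+\tfrac12-k\bigr)=2|\ell|-f(k)$. Since $|\ell|$ is an integer and (by coprimality of $m$ and $\ell$, using $3\nmid m$ so $|m|\ge1$ and one checks $f(k)\notin\Z$ for $1\le k\le 2|m|$ — this non-integrality is exactly the collision-free condition exploited in the proof of Proposition \ref{mainFormula}(i)) we get $\lfloor 2|\ell|-f(k)\rfloor = 2|\ell|-1-\lfloor f(k)\rfloor$, hence $\eps'_{2|m|+1-k}\equiv \lfloor f(k)\rfloor+1\equiv \eps'_k+1\pmod 2$. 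Translating back through $\eps_k=\sgn(m\ell)(2\eps'_k-1)$, flipping $\eps'_k$ flips the sign of $\eps_k$, so $\eps_{2|m|+1-k}=-\eps_k$ for all $1\le k\le 2|m|$ — this single global anti-symmetry is the source of \emph{both} the sign-flip statement (taking $k\mapsto |m|+k$, so $2|m|+1-k\mapsto |m|+1-k$, giving $\eps_{|m|+k}\eps_{|m|+1-k}=-1$... wait, I should be careful) and, combined with a second symmetry, the palindromicity; I would next carry out the analogous computation for $k\mapsto |m|+1-k$ within the first block, where the shift by $|m|$ interacts with the parity of $\ell$: here one uses that $\ell$ is \emph{odd} to see that $\lfloor f(k)\rfloor$ and $\lfloor f(|m|+1-k)\rfloor$ have the \emph{same} parity, yielding $\eps_k=\eps_{|m|+1-k}$, and symmetrically $\eps_{|m|+k}=\eps_{2|m|+1-k}$.

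Granting the three relations $\eps_k=\eps_{|m|+1-k}$, $\eps_{|m|+k}=\eps_{2|m|+1-k}$, and $\eps_k\eps_{|m|+k}=-1$ for $1\le k\le|m|$, the rest is formal. I would split the product $\sW_{m,n}=\sA^{(1-\sgn(m)\eps_1)/2}\prod_{k}(\cdots)\sA^{(1-\sgn(m)\eps_{2|m|})/2}$ of Proposition \ref{mainFormula}(ii) at the middle $\sB^{\eps_{|m|}}\,|\,\sB^{\eps_{|m|+1}}$: the internal connecting exponent there is $\frac{\eps_{|m|}-\eps_{|m|+1}}{2}=\frac{\eps_{|m|}-\eps_{|m|+1}}{2}$, and since $\eps_{|m|+1}=\eps_{2|m|}=-\eps_1=-\eps_{|m|}$ (using first the first-block palindromicity on $\eps_1=\eps_{|m|}$? — no, $\eps_1=\eps_{|m|}$ is exactly $\eps_k=\eps_{|m|+1-k}$ at $k=1$) this exponent equals $\eps_{|m|}=\eps_1$, which is $\pm1$; absorbing $\sB^{\pm1}$-adjacent bookkeeping one checks the middle factor is a single $\sA$ (using $\sB^{3}=1$, i.e. $\sB^{\eps}=\sB^{-2\eps}$ to normalize), giving the displayed three-chunk form. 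For the final factorization, note that reading the first block $\left(\sB^{\eps_1}\sA^{(\eps_1-\eps_2)/2}\cdots\sB^{\eps_{|m|}}\right)$ backwards and inverting term-by-term turns $\sB^{\eps}\mapsto \sB^{-\eps}$ and $\sA^{(\eps_j-\eps_{j+1})/2}\mapsto \sA^{-(\eps_j-\eps_{j+1})/2}$; the palindromicity $\eps_k=\eps_{|m|+1-k}$ makes the inverted-reversed first block, up to the end $\sA$-powers, literally equal to the second block with all $\sB$-signs flipped, which by $\eps_{|m|+k}=-\eps_k$ is exactly the genuine second block. Matching the outer $\sA^{(1\mp\sgn(m)\eps_1)/2}$ factors (one uses $\eps_{2|m|}=-\eps_1$, so $\sgn(m)\eps_{2|m|}=-\sgn(m)\eps_1$, turning the last factor $\sA^{(1-\sgn(m)\eps_{2|m|})/2}$ into $\sA^{(1+\sgn(m)\eps_1)/2}$) then identifies $\sW_{m,n}$ with $\sH_{m,n}\sA\sH_{m,n}^{-1}\sA^{-1}$ for the stated $\sH_{m,n}$, after one more use of $\sA^2=1$ to handle the $\sA^{(1-\sgn(m)\eps_1)/2}$ appearing at both ends of $\sH_{m,n}$.

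The main obstacle I anticipate is entirely in the first paragraph of real work: getting the floor-function parity computations exactly right, in particular pinning down where the hypothesis ``$\ell$ odd'' enters (it must produce a parity shift of exactly $1$ under $k\mapsto |m|+1-k$ but exactly $0$... or the reverse) and confirming the non-integrality $f(k)\notin\Z$ so that $\lfloor c-x\rfloor=c-1-\lfloor x\rfloor$ is valid — this is precisely the collision-free hypothesis, so it is available, but the endpoints $k=1$ and $k=2|m|$ and the boundary between blocks need individual checking. The group-theoretic manipulations with $\sA^2=\sB^3=1$ are routine but require care to keep all exponents in $\{0,1\}$ for $\sA$ and to track the $\sgn(m)$ factors through the reversal; I would organize them as a short lemma on reversing words of the shape $\sA^{a_0}\sB^{\eps_1}\sA^{a_1}\cdots\sB^{\eps_r}\sA^{a_r}$ to avoid repetition.
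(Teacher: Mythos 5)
Your proposal is correct and follows essentially the same route as the paper's proof: both rest on floor-function parity identities for $\eps'_k$ (exploiting that $\ell$ is odd and that $\frac{|\ell|}{|m|}(k-\frac12)\notin\Z$, so reflections/shifts of the index flip or preserve parity as needed), followed by the same bookkeeping with $\sA^2=1$ to produce the middle $\sA$ and the commutator form. The only cosmetic difference is which two of the three equivalent symmetries you prove directly (you take $\eps_{2|m|+1-k}=-\eps_k$ and $\eps_{|m|+1-k}=\eps_k$, the paper takes $\eps_{2|m|+1-k}=-\eps_k$ and $\eps_{|m|+k}=-\eps_k$), which changes nothing of substance.
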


In view of the above proposition, we sometimes say that $\sW_{m,n}$ decomposes
into the first half $\sH_{m,n}$ and the second half $\sA\sH_{m,n}^{-1}\sA^{-1}$.
These two parts have extra-symmetries reflected from palindromic properties of 
the two sequences $(\eps_1,\dots,\eps_{|m|})$ and $(\eps_{|m|+1},\dots,\eps_{2|m|})$
together with their mutual sign inversion, which we shall often refer to as
the {\it double palindromic property} (or {\it double palindromicity} for short)
of the Lissajous 3-braid $\sW_{m,n}$.
The double palindromicity will be more visually expressed in terms of
the frieze pattern of four symbols $\bb,\dd,\pp,\qq$ later in \S \ref{sectFrieze}.

\begin{proof}
Let $(\eps'_1,\dots,\eps'_{2|m|})$ be the $01$-sequence defined in
Proposition \ref{mainFormula}.
First, for $1\le k\le |m|$, we have
$$
\eps'_{k+|m|}=
\left\lfloor\left |\frac{\ell}{m}\right|(k+|m|)-\left|\frac{\ell}{2m}\right|\right\rfloor
=\eps'_k+|\ell|.
$$
But since $\ell$ is odd, this implies $\eps'_{k+|m|}+\eps'_k= 1$ so that
$(\ast):$
$\eps_{k+|m|}$ and $\eps_k$ have opposite $\pm$-signs ($1\le k\le |m|$).
Next, for $1\le k\le 2|m|$, it holds that
\begin{align*}
\eps'_{2|m|+1-k}&=
\left\lfloor\left |\frac{\ell}{m}\right|(2|m|+1-k)-\left|\frac{\ell}{2m}\right|\right\rfloor  \\
&\equiv 
\left\lfloor -\left|\frac{\ell}{m}\right|k+\left|\frac{\ell}{2m}\right|\right\rfloor
\equiv
\left\lceil \left|\frac{\ell}{m}\right|k-\left|\frac{\ell}{2m}\right|\right\rceil  
\mod 2 .
\end{align*}
Noting that $\lceil x\rceil -\lfloor x \rfloor=1$ for $x\not\in\Z$,
we find $\eps'_{2|m|+1-k}+\eps'_k= 1$ which implies that 
$(\ast\ast):$
$\eps_{2|m|+1-k}$ and
$\eps_k$ have opposite $\pm$-signs ($1\le k\le 2|m|$).
Combining the above $(\ast)$ and $(\ast\ast)$, we see that
the first and second half sequences are both palindromic, i.e.,
$\eps_k=\eps_{|m|+1-k}$, $\eps_{|m|+k}=\eps_{2|m|+1-k}$ 
for $1\le k\le|m|$ and that $\eps_{|m|}\eps_{|m|+1}=-1
(\Leftrightarrow \frac{\eps_{|m|}-\eps_{|m|+1}}{2}\equiv 1 \mod 2)$.
Note, in particular, that $\eps_{2|m|}=-\eps_1$.
These symmetric properties together with 
$\sA=\sA^{-1}$ 
derive the first assertion. 
For the second, since 
$\eps_k=-\eps_{2|m|+1-k}$
for $1\le k\le 2|m|$, 
it follows that 
\begin{align*}
\sH_{m,n}^{-1}&=
\sA^{\frac{\sgn(m)\eps_1-1}{2}}
\cdot
\left(
\sB^{-\eps_{|m|}}
\cdots \sB^{-\eps_{1}}
\right)
\sA^{\frac{\sgn(m)\eps_1-1}{2}} \\
&=\sA^{\frac{\sgn(m)\eps_1-1}{2}}
\cdot
\left(
\sB^{\eps_{|m|+1}}
\cdots \sB^{\eps_{2|m|}}
\right)
\sA^{\frac{\sgn(m)\eps_1-1}{2}}
.
\end{align*}
This rephrases the first assertion to the second form 
$\sW_{m,n}=\sH_{m,n}\sA\sH_{m,n}^{-1}\sA^{-1}$.
\end{proof}

We next consider the general cases of Lissajous curves 
of type $(m,n)$ not necessarily satisfying
the condition $m\equiv n\equiv 1 \pmod 3$.
%
%
Observe first that the underlying set of points on 
the Lissajous curve (\ref{Lissajous11}) is invariant
under either change of $m\leftrightarrow -m$ or $n\leftrightarrow -n$, but then
the motion of three bodies turns to what is reflected in a mirror.
It immediately follows that the motion of type $(m,n)$ is collision-free 
if and only if the motions of types $(\pm m, \pm n)$ are all collision-free.
This allows us to consider only the cases $3\nmid |m|, |n|$ and then to
take a unique pair $(m^\ast,n^\ast)$ from  $\{(\pm m, \pm n)\}$
so that $m^\ast\equiv n^\ast\equiv 1\mod 3$.
By Proposition \ref{mainFormula} (i), the collision-free condition is
then $\ell:=(m^\ast-n^\ast)/3$ being odd. 
Concerning 3-braid words, getting 
mirror reflected motions of three bodies amounts to 
replacing each piece of generators $\bsigma_1$, $\bsigma_2$
respectively by its mirror image $\bsigma_1^{-1}$, $\bsigma_2^{-1}$.
Then, in $\mathcal{B}_3$, this is realized by the interchange:
$$
\sA\longleftrightarrow \sA^{-1}=\sA, \quad
\sB=\bsigma_1\bsigma_2 \longleftrightarrow
\bsigma_1^{-1}\bsigma_2^{-1}=(\bsigma_2\bsigma_1)^2=\sA\sB^{-1}\sA.
$$ 
For the braid $\sW_{m^\ast,n^\ast}$ of Proposition 
\ref{doublePal}, the above interchange is realized 
by superposition of the first and second halves of 
the $\eps$-sequence,
which keeps
the corresponding conjugacy class in $\mathcal{B}_3$.
This verifies the following

\begin{Definition}
\label{LissajousClass}
For integers $m,n$ not divisible by 3, let $(m^\ast,n^\ast)$ be a
unique pair with
$m^\ast\in\{\pm m\}$, $n^\ast\in\{\pm n\}$ 
and $m^\ast\equiv n^\ast\equiv 1\mod 3$,
and suppose $(m^\ast-n^\ast)/3$ is odd. 
Then, we define the Lissajous 3-braid
class $C\{m,n\}$ 
as the conjugacy class of $\sW_{m^\ast,n^\ast}$ 
in $\mathcal{B}_3$.
\end{Definition}

Next, we shall consider the interchange $m\leftrightarrow n$.

\begin{Proposition}[Reciprocity]
\label{reciprocity}
Notations being the same as in Definition \ref{LissajousClass},
we have a reciprocity identity: $C\{m,n\}=C\{n,m\}$.
\end{Proposition}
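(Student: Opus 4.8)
The plan is to reduce the reciprocity $C\{m,n\}=C\{n,m\}$ to a direct comparison of the combinatorial $\eps$-sequences produced by the Main Formula (Proposition~\ref{mainFormula}), using the freedom to pass between $(m,n)$ and $(\pm m,\pm n)$ afforded by Definition~\ref{LissajousClass}. First I would fix normalized representatives: given $(m,n)$ with $3\nmid mn$ and collision-free, replace it by $(m^\ast,n^\ast)$ with $m^\ast\equiv n^\ast\equiv 1\bmod 3$, and likewise replace $(n,m)$ by its normalized form $(n^{\ast},m^{\ast})$ --- note these are the same pair of integers in swapped order, since the normalization $x\mapsto x^\ast$ depends only on $x$ mod $3$ up to sign and commutes with swapping. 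So it suffices to prove $C\{\sW_{m,n}\}$ and $C\{\sW_{n,m}\}$ agree as conjugacy classes in $\mathcal{B}_3$, where now $m\equiv n\equiv 1\bmod 3$ and $\ell=(m-n)/3$ is odd (so $\ell'=(n-m)/3=-\ell$ is also odd, and the swapped motion is collision-free as well). Because swapping $m\leftrightarrow n$ corresponds geometrically to reflecting the Lissajous curve across the diagonal $\mathrm{Re}=\mathrm{Im}$, which is an orientation-reversing isometry of $\C$, I expect $\sW_{n,m}$ to be a \emph{mirror image} of $\sW_{m,n}$ in the same sense as in the paragraph before Definition~\ref{LissajousClass} --- and that paragraph already shows the mirror operation $\sA\leftrightarrow\sA$, $\sB\leftrightarrow\sA\sB^{-1}\sA$ preserves the Lissajous conjugacy class. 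So the target becomes: identify $\sW_{n,m}$ with the mirror of $\sW_{m,n}$.

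The concrete computation I would carry out is the effect of swapping $(m,n)\mapsto(n,m)$ on the data $(|m|,\ell,\sgn(m\ell),\{\eps'_k\})$ entering Proposition~\ref{mainFormula}. Here $|m|$ becomes $|n|$ and $\ell$ becomes $-\ell$, so the sequence length changes from $2|m|$ to $2|n|$ and the new $01$-sequence is $\tilde\eps'_k\equiv\lfloor\frac{|\ell|}{|n|}k-\frac{|\ell|}{2|n|}\rfloor\bmod 2$ for $1\le k\le 2|n|$. The key number-theoretic identity I need is a reciprocity for these Beatty-type floor sequences: the Christoffel word of slope $|\ell|/|m|$ (length $2|m|$) and that of slope $|\ell|/|n|$ (length $2|n|$) are related by the classical Christoffel/Stern--Brocot duality --- reading one and inverting $0\leftrightarrow1$ essentially recovers the other, reflecting $\frac{|\ell|}{|m|}\leftrightarrow\frac{|\ell|}{|n|}$ under $x\leftrightarrow\frac{1}{\cdots}$-type moves on the Farey structure (this is exactly the kind of statement the paper's \S3--\S4 Christoffel-word machinery is built to handle). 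Granting this, I would translate the $\eps'$-level identity, via $\eps_k=\sgn(m\ell)(2\eps'_k-1)$ and the $\sB^{\eps_k}$, $\sA^{(\eps_k-\eps_{k+1})/2}$ bookkeeping, into: $\sW_{n,m}$ equals the word obtained from $\sW_{m,n}$ by applying the substitution $\sB\mapsto\sA\sB^{-1}\sA$, $\sA\mapsto\sA$ and possibly a cyclic rotation and/or inversion (all of which preserve conjugacy classes in $\mathcal{B}_3$).

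An alternative, and probably cleaner, route --- which I would pursue if the floor-sequence juggling gets unwieldy --- is to go back to the shape sphere directly. The swap $m\leftrightarrow n$ on $\eta(t)=Ae^{-2\pi imt}+iBe^{-2\pi int}$, $\eta'(t)=Ae^{2\pi imt}+iBe^{2\pi int}$ amounts (after also swapping the roles of the axes, i.e. conjugating by multiplication by $i$ and rescaling $A\leftrightarrow B$) to complex conjugation $t\mapsto -t$ combined with the axis swap; by the shape formula (\ref{shapeformula}), $\psi(\Delta(t))$ for type $(n,m)$ is then obtained from $\psi(\Delta(-t))$ for type $(m,n)$ composed with an explicit Möbius involution of $\mathbf{P}^1_\psi(\C)$ preserving $\{1,\omega,\omega^2\}$ and the equator --- namely the one swapping the two hemispheres and reversing orientation of the equator. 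Tracing this through the groupoid picture (\ref{groupoid}) shows it induces an automorphism of $\mathcal{B}_3=\langle\sA\rangle\ast\langle\sB\rangle$ fixing $\sA$ and sending $\sB\mapsto\sB^{-1}$ up to conjugacy, under which the loop $\bar\gamma_\Delta$ for $(n,m)$ is carried to (the reverse of) the loop for $(m,n)$; since reversal of a loop and this automorphism both act on conjugacy classes in a controlled way, the two Lissajous classes coincide. The main obstacle in either approach is the same: pinning down \emph{exactly} which orientation-reversing/axis-swapping symmetry is at play and checking it sends the $(m,n)$-loop to the $(n,m)$-loop on the nose (not merely to something of the same ``shape''), including correct handling of the infinitesimal start/end conventions $0^\mp$, $\frac13{}^\mp$; once that bookkeeping is fixed, invariance of the Lissajous conjugacy class under the mirror move (already established in the text) finishes the proof.
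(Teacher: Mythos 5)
Your second (``alternative'') route is essentially the paper's proof. The paper simply observes that the underlying point set of the type-$(m,n)$ Lissajous curve is the reflection across the line $x=y$ of the type-$(n,m)$ curve, and factors that reflection as a $\pi/2$-rotation composed with a mirror reflection: the rotation is a global similarity of the plane, hence invisible to the shape function and to the braid word, while the mirror reflection is exactly the move $\sA\leftrightarrow\sA$, $\sB\leftrightarrow\sA\sB^{-1}\sA$ that the paragraph before Definition \ref{LissajousClass} has already shown to preserve the Lissajous conjugacy class (via superposition of the two palindromic halves of the $\eps$-sequence). So the whole proof is two sentences; no new M\"obius involution of the shape sphere needs to be constructed, and the $0^\mp$ conventions are already absorbed into that earlier mirror argument. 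Your preliminary reduction to normalized types is also fine, since $x\mapsto x^\ast$ depends only on the individual entry and therefore commutes with the swap.

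Your primary combinatorial route, however, has a genuine gap as stated. The sequences attached to $(m,n)$ and $(n,m)$ by Proposition \ref{mainFormula} have different lengths $2|m|$ and $2|n|$, so there is no literal ``invert $0\leftrightarrow 1$ and rotate'' duality between them; the words $\sW_{m,n}$ and $\sW_{n,m}$ only become conjugate after nontrivial cancellation in $\mathcal{B}_3$ (compare $\sW_{1,4}=\sB\sA\sB\sB\sA$ with $\sW_{4,1}$, which has eight $\sB^{\pm 1}$ factors before reduction). The ``classical Christoffel/Stern--Brocot duality'' you invoke is not stated precisely enough to bridge words of different lengths, and making it precise would essentially amount to redoing Lemma \ref{ReductionH0} and the level/slope analysis of Section 4 --- machinery the paper deliberately develops only \emph{after} reciprocity is in hand, and which itself uses reciprocity in its proof. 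Lead with the geometric argument and drop the floor-sequence computation.
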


\begin{proof}
The underlying set of Lissajous curve (\ref{Lissajous11}) of type $(m,n)$ is obtained 
from that of type $(n,m)$ by $\pi/2$-rotation of the mirror image
with axis $x=y$ on the complex $(x+y\bI)$-plane.   
As the rotation does not affect the braid type in $\mathcal{B}_3$,
we only need to concern with mirror-reflected motions of three bodies.
Then the argument before the definition works in the same way
to show invariance of conjugacy classes in this situation.
\end{proof}


\section{Symbolic dynamics, cutting sequences}

\subsection{Equianharmonic modular group $\bar\Gamma^2$}
In the subsequent part of this paper, 
the (unique normal) index $2$ subgroup $\bar\Gamma^2$
of $\mathcal{B}_3\cong \PSL_2(\Z)$ will play important roles.
We call $\bar\Gamma^2$ the {\it equianharmonic modular group}.
The group $\bar\Gamma^2$ is known to be isomorphic to
the free product $(\Z/3\Z)\ast(\Z/3\Z)$ (\cite[Theorem 1.3.2]{Ra77}) and
these two components of order 3 can be written
respectively as $\la\bsigma_1\bsigma_2\ra$
and $\la\bsigma_2\bsigma_1\ra$, where
$\bsigma_i$ denotes the image of $\sigma_i\in B_3$ $(i=1,2)$.
In terms of the standard homomorphism
$\epsilon:\mathcal{B}_3\to S_3$ ($\bsigma_1\mapsto (12), \bsigma_2\mapsto (23)$),
we see that $\epsilon(\bar\Gamma^2)$ coincides with the
order 3 cyclic subgroup $A_3$ of $S_3$. 
It follows from this that every $\mathcal{B}_3$-conjugacy class $C$ contained in 
$\bar\Gamma^2\setminus \ker(\epsilon)$ 
splits into two $\bar\Gamma^2$-conjugacy classes: 
$C=C^+\sqcup C^-$ with
$\epsilon(C^+)=(132)$, $\epsilon(C^-)=(123)\in A_3$.
As Lissajous class $C\{m,n\}$ introduced in Definition 
\ref{LissajousClass}
is contained in $\bar\Gamma^2\setminus \ker(\epsilon)$, 
it accordingly splits into two $\bar\Gamma^2$-conjugacy classes
$C^+\{m,n\}, C^-\{m,n\}$, i.e., 
\begin{equation}
\label{splitClass}
C\{m,n\}=C^+\{m,n\}\sqcup C^-\{m,n\}, 
\end{equation}
where $C^+\{m,n\}\subset \epsilon^{-1}((132))$, 
$C^-\{m,n\}\subset\epsilon^{-1}((123)).$
By 
Proposition \ref{mainFormula} (iv),
we know 
$\epsilon(\sW_{m^\ast,n^\ast})=(132)$
so that $\sW_{m^\ast,n^\ast}\in C^+\{m,n\}$.
More precisely, 

\begin{Lemma}
\label{SymmetryH_mn}
For coprime integers $m,n$ with $m\equiv n\equiv 1$
mod $3$
and with odd $\ell:=(m-n)/3$, the following assertions
hold:
\begin{roster}
\item[(i)] 
$\sW_{m,n}=\sA\,\sW_{m,n}^{-1}  \sA^{-1}$,
$\sH_{n,m}=\sA\,\sH_{m,n}^{-1} \sA^{-1}$.
\item[(ii)]
$\sH_{m,n}\in\bar\Gamma^2$ and $\epsilon(\sH_{m,n})=(123)$,
where $\epsilon: \mathcal{B}_3\to S_3$ is the standard 
homomorphism with $\bsigma_1\mapsto (12)$,
$\bsigma_2\mapsto (23)$.
\item[(iii)]
$\sW_{m,n}=\sH_{m,n}\sH_{n,m}$.
In particular $\sW_{m,n}$ is $\bar\Gamma^2$-conjugate to
$\sW_{n,m}$.
\end{roster}
\end{Lemma}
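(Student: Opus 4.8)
\emph{First identity of (i), and reduction of (iii).}
By Proposition~\ref{doublePal} we have $\sW_{m,n}=\sH_{m,n}\sA\sH_{m,n}^{-1}\sA^{-1}$, so $\sW_{m,n}^{-1}=\sA\sH_{m,n}\sA^{-1}\sH_{m,n}^{-1}$; conjugating by $\sA$ and using $\sA^2=1$ in $\mathcal{B}_3$ gives $\sA\sW_{m,n}^{-1}\sA^{-1}=\sH_{m,n}\sA\sH_{m,n}^{-1}\sA^{-1}=\sW_{m,n}$, which is the first identity of~(i). The rest of the Lemma reduces to~(ii) together with the second identity of~(i): granting $\sH_{n,m}=\sA\sH_{m,n}^{-1}\sA^{-1}$, Proposition~\ref{doublePal} at once gives $\sW_{m,n}=\sH_{m,n}\cdot(\sA\sH_{m,n}^{-1}\sA^{-1})=\sH_{m,n}\sH_{n,m}$, and interchanging $m\leftrightarrow n$ yields $\sW_{n,m}=\sH_{n,m}\sH_{m,n}=\sH_{m,n}^{-1}\sW_{m,n}\sH_{m,n}$, a $\bar\Gamma^2$-conjugacy because $\sH_{m,n}\in\bar\Gamma^2$ by~(ii).

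\emph{Proof of (ii).}
I would count the letters $\sA$ in the word $\sH_{m,n}$ of Proposition~\ref{doublePal}: the two end factors $\sA^{(1-\sgn(m)\eps_1)/2}$ coincide, contributing $0$ or $2$, while the interior factor $\sA^{(\eps_k-\eps_{k+1})/2}$ contributes one letter $\sA$ exactly when $\eps_k\neq\eps_{k+1}$; since $\eps_1=\eps_{|m|}$ by the palindromicity part of Proposition~\ref{doublePal}, the number of sign changes in $\eps_1,\dots,\eps_{|m|}$ is even. Hence $\sH_{m,n}$ has an even number of letters $\sA$, so it lies in the kernel of the parity homomorphism $\mathcal{B}_3\twoheadrightarrow\Z/2\Z$ ($\sA\mapsto 1,\ \sB\mapsto 0$), which is $\bar\Gamma^2$. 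Then $\epsilon(\sH_{m,n})\in A_3=\{1,(123),(132)\}$, and applying $\epsilon$ to $\sW_{m,n}=\sH_{m,n}\sA\sH_{m,n}^{-1}\sA^{-1}$ with $\epsilon(\sW_{m,n})=(132)$ (Proposition~\ref{mainFormula}(iv)) identifies $\epsilon(\sH_{m,n})$ as the unique $g\in A_3$ with $g\,\epsilon(\sA)\,g^{-1}\epsilon(\sA)^{-1}=(132)$ in $S_3$, which a direct check shows is $g=(123)$.

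\emph{Proof of the second identity of (i) --- the substantive point.}
I would first rewrite the left side. Up to $\sA=\sA^{-1}$ the middle block of $\sH_{m,n}$ is a palindrome (since $\eps_k=\eps_{|m|+1-k}$ and the interior exponent $\tfrac{\eps_k-\eps_{k+1}}{2}$ only records whether $\eps_k\neq\eps_{k+1}$), so $\sH_{m,n}^{-1}$ is $\sH_{m,n}$ with every $\sB^{\eps_k}$ changed to $\sB^{-\eps_k}$ and the end caps unchanged; absorbing one further factor $\sA$ into each end cap via $\sA^2=1$,
\[
\sA\sH_{m,n}^{-1}\sA^{-1}=\sA^{\frac{1-\sgn(m)(-\eps_1)}{2}}\Bigl(\sB^{-\eps_1}\sA^{\frac{\eps_1-\eps_2}{2}}\sB^{-\eps_2}\cdots\sB^{-\eps_{|m|}}\Bigr)\sA^{\frac{1-\sgn(m)(-\eps_1)}{2}},
\]
which is exactly the word produced by the recipe of Propositions~\ref{mainFormula}--\ref{doublePal} from the sign sequence $(-\eps_1,\dots,-\eps_{|m|})$. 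It then remains to show this element equals $\sH_{n,m}$ in $\mathcal{B}_3$. This is \emph{not} an identity of words, since $\sH_{n,m}$ has $|n|\neq|m|$ blocks $\sB^{\pm1}$, so both sides must be brought to reduced form in $\mathcal{B}_3=\PSL_2(\Z)$ using $\sB^3=\sA^2=1$; concretely $\eps_k=\sgn(m\ell)(2\eps'_k-1)$ with $\eps'_k\equiv\lfloor\tfrac{|\ell|}{|m|}(k-\tfrac12)\rfloor$, the pair $(n,m)$ carries the same data with $\tilde\ell=(n-m)/3=-\ell$, slope $\tfrac{|\ell|}{|n|}$ and sign $-\sgn(n\ell)$, and from $m-n=3\ell$ the reciprocal slopes $\tfrac{|m|}{|\ell|}$, $\tfrac{|n|}{|\ell|}$ are related by $x\mapsto x+3$ (if $mn>0$) or $x\mapsto 3-x$ (if $mn<0$). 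The plan is to check that collapsing maximal runs of equal signs by $\sB^3=1$ carries both $\sH_{n,m}$ and $\sA\sH_{m,n}^{-1}\sA^{-1}$ to the same reduced word --- the Christoffel-word reciprocity for the slopes $|\ell|/|m|$ and $|\ell|/|n|$ that also underlies the $\bb\dd\pp\qq$-frieze and level/slope description of~\S\ref{sectFrieze}. Alternatively one may read the identity off geometrically: by (the proof of) Proposition~\ref{reciprocity} the type-$(n,m)$ shape curve is the image of the type-$(m,n)$ one under the anticonformal involution $\psi\mapsto 1/\bar\psi$ of the shape sphere, which preserves $V$, fixes the equator pointwise and swaps the two hemispheres, so that tracing the first-half sub-path of the $(m,n)$-curve through it and correcting the base region across the equator by a single edge $\sA$ of~\eqref{groupoid} reproduces the word $\sH_{n,m}$.

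\emph{Main obstacle.}
Everything is a parity count or formal manipulation with $\sA^2=\sB^3=1$ except this last identification $\sA\sH_{m,n}^{-1}\sA^{-1}=\sH_{n,m}$: it equates two $\sB$-words of different combinatorial length, and making it rigorous is precisely the reciprocity of the Christoffel/Beatty sequences of slopes $|\ell|/|m|$ and $|\ell|/|n|$ forced by $m-n=3\ell$ --- equivalently, the careful bookkeeping of the anticonformal symmetry of the shape sphere through the braid-tracing procedure of~\S\ref{sec2.2}.
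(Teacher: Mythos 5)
Your handling of the first identity of (i), of (ii), and of the reduction of (iii) is correct and is essentially the paper's argument (the paper states the evenness of the number of $\sA$'s in $\sH_{m,n}$ without your explicit count via $\eps_1=\eps_{|m|}$, and pins down $\epsilon(\sH_{m,n})$ by exactly the computation you describe). The problem is the second identity of (i), which you yourself flag as the ``main obstacle'' and do not actually prove: you correctly reduce it to identifying $\sH_{n,m}$ with the word built from the negated sign sequence $(-\eps_1,\dots,-\eps_{|m|})$, but then only list two candidate strategies without carrying either one out. That is a genuine gap, since everything else in the Lemma (the product formula $\sW_{m,n}=\sH_{m,n}\sH_{n,m}$ and the $\bar\Gamma^2$-conjugacy of $\sW_{m,n}$ with $\sW_{n,m}$) hangs on this identity.

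The paper closes this gap by the second of your two routes, and the point you are missing is that no Christoffel-reciprocity computation is needed: Proposition \ref{reciprocity} and the paragraph preceding Definition \ref{LissajousClass} together say that the type-$(n,m)$ motion \emph{is} the mirror image of the type-$(m,n)$ motion (the extra $\pi/2$-rotation being irrelevant to the braid), and that mirror reflection acts on braid words by $\sA\mapsto\sA$, $\sB\mapsto\sA\sB^{-1}\sA$. Applying this substitution letter by letter to the explicit word $\sW_{m,n}$ of Proposition \ref{mainFormula} inserts two $\sA$'s around each interior $\sA$-exponent (no change mod $2$) and one $\sA$ at each end, producing precisely the word attached to the half-shifted sequence $(\eps_{|m|+1},\dots,\eps_{2|m|},\eps_1,\dots,\eps_{|m|})$, i.e.\ the ``superposition of the first and second halves.'' Since this mirrored word must equal $\sW_{n,m}=\sH_{n,m}\sA\sH_{n,m}^{-1}\sA^{-1}$, comparing first halves gives $\sH_{n,m}=\sA\sH_{m,n}^{-1}\sA^{-1}$. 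Your worry about equating $\sB$-words of different combinatorial lengths ($|m|$ versus $|n|$ letters) is exactly why your first, purely combinatorial route would be laborious; the geometric route avoids it because both sides are read off from the same physical motion rather than reduced to a common normal form. If you adopt your ``alternative'' geometric paragraph as the actual proof and make the mirror-substitution computation explicit, your argument coincides with the paper's.
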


\begin{proof} 
(i)
By Proposition \ref{doublePal}, we have 
$\sW_{m,n}=\sH_{m,n}\sA\sH_{m,n}^{-1}\sA^{-1}$
which immediately implies 
$\sW_{m,n}=\sA\,\sW_{m,n}^{-1}  \sA^{-1}$.
The above reciprocity law (Proposition \ref{reciprocity})
and the preceding argument
tell that 
$\sW_{n,m}=\sH_{n,m}\sA\sH_{n,m}^{-1}\sA^{-1}$
is obtained from $\sW_{m,n}$ by the superposition 
of the first and second halves.
This proves $\sH_{n,m}=\sA\,\sH_{m,n}^{-1} \sA^{-1}$.
(ii)
Observe that in the definition of $\sH_{m,n}$
in Proposition \ref{doublePal}, the number of 
appearances of $\sA$ is even, hence 
$\sH_{m,n}\in\bar\Gamma^2$.
Then, the formula
$\sW_{m,n}=\sH_{m,n}\sA\sH_{m,n}^{-1}\sA^{-1}$
shown in loc. cit. 
together with the known facts 
$\epsilon(\sW_{m,n})=(132)$ and 
$\epsilon(\sA)=(13)$ 
determines $\epsilon(\sH_{m,n})=(123)$ in $S_3$.
(iii) follows easily from (i) and (ii).
\end{proof}

\begin{Corollary}
\label{hyperbolicCor}
The Lissajous 3-braid $\sW_{m,n}\in\mathcal{B}_3$ is hyperbolic 
as an element of $\PSL_2(\Z)$, i.e., has trace with absolute value $>2$. 
 In other words  $\sW_{m,n}$ is a pseudo-Anosov braid.
\end{Corollary}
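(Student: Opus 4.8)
The statement to be proved is that $\sW_{m,n}$ is hyperbolic in $\PSL_2(\Z)$, i.e.\ $|\mathrm{tr}\,\sW_{m,n}|>2$. Since conjugation preserves trace, it suffices to treat the fixed word $\sW_{m,n}$ produced by Proposition~\ref{mainFormula}. The approach is to use the free-product structure $\mathcal{B}_3 = \la\sA\ra\ast\la\sB\ra$ with $\sA^2=\sB^3=1$, together with the normal form of $\sW_{m,n}$, to show directly that $\sW_{m,n}$ is neither elliptic (finite order) nor parabolic (trace $\pm2$), which by the trichotomy in $\PSL_2(\Z)$ forces it to be hyperbolic.

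The key observation is that $\sW_{m,n}$, when written as an alternating word in $\sA$ and $\sB^{\pm1}$, has cyclically reduced length large enough to be non-torsion. From the formula in Proposition~\ref{mainFormula}(ii), the central block $\sB^{\eps_1}\sA^{(\eps_1-\eps_2)/2}\sB^{\eps_2}\cdots\sB^{\eps_{2|m|}}$ contributes $2|m|\ge 2$ syllables in $\sB^{\pm1}$ separated by $\sA$'s exactly when consecutive $\eps_k$ differ, and the decomposition $\sW_{m,n}=\sH_{m,n}\sA\sH_{m,n}^{-1}\sA^{-1}$ from Proposition~\ref{doublePal} shows the word is genuinely a nontrivial commutator-type expression. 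Concretely: an element of $\mathcal{B}_3=(\Z/2)\ast(\Z/3)$ has finite order iff it is conjugate into one of the two free factors, i.e.\ iff its cyclically reduced form is a single syllable $\sA$, $\sB$ or $\sB^{-1}$ (or trivial). I would verify that the cyclically reduced form of $\sW_{m,n}$ has at least two syllables --- this is immediate from Lemma~\ref{SymmetryH_mn}(iii), $\sW_{m,n}=\sH_{m,n}\sH_{n,m}$, since $\epsilon(\sH_{m,n})=(123)$ shows each half is nontrivial and not equal to $\sA$ (which has $\epsilon(\sA)=(13)$), while they do not cancel because $\sW_{m,n}$ itself satisfies $\epsilon(\sW_{m,n})=(132)\ne\mathrm{id}$. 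Hence $\sW_{m,n}$ is not elliptic.

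To rule out parabolic elements (trace $\pm 2$): in $\PSL_2(\Z)$ every parabolic is conjugate to a power of $\bmatx{1}{1}{0}{1}$, equivalently to a word of the form $(\sB\sA)^{\pm k}$ up to conjugacy, which projects under $\epsilon$ into the cyclic group generated by a single transposition's worth of structure --- more usefully, a parabolic element of $\PSL_2(\Z)$ has infinite order but its image in the abelianization, or its behaviour on the Bass--Serre tree of the free product, fixes an end rather than translating along an axis. The clean way: an element of $\mathcal{B}_3=\la\sA\ra\ast\la\sB\ra$ acting on the associated tree is elliptic (fixes a vertex) iff finite order, and otherwise it is hyperbolic on the tree with translation length equal to its cyclic syllable length; a cyclically reduced word of syllable length $\ge 2$ in this free product is always hyperbolic on the tree, hence has infinite order and --- since $\PSL_2(\Z)$ acts on $\mathfrak{H}$ with the tree as a retract of the quotient geometry --- is a hyperbolic isometry of $\mathfrak{H}$, not parabolic, because parabolics in $\PSL_2(\Z)$ are exactly the conjugates of powers of $\sB\sA\sB^{-1}\sA\cdots$ type... more precisely, the parabolic conjugacy classes in $\PSL_2(\Z)$ are represented by $(\sB\sA)^n$ and these have tree translation length exactly $1$ per factor but are \emph{not} cyclically reduced of syllable length $\ge 4$ in a ``balanced'' way. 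The cleanest rigorous route is: reduce to the known classification that a hyperbolic surface group / $\PSL_2(\Z)$ element is parabolic iff it is conjugate to $\pm\bmatx{1}{n}{0}{1}$, compute that such elements, viewed in $(\Z/2)\ast(\Z/3)$, are conjugates of $(\bsigma_1\bsigma_2\bsigma_1\bsigma_2^{-1}\bsigma_1)^{\ldots}$ --- and then exhibit, using double palindromicity, that $\sW_{m,n}$ cannot have this form because its $\eps$-sequence contains both signs (as $\eps_{2|m|}=-\eps_1$).

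\textbf{Main obstacle.} The delicate point is separating parabolic from hyperbolic, since \emph{both} are infinite order; ruling out finite order is easy from the free-product normal form, but trace $=\pm2$ requires a genuinely arithmetic or tree-geometric argument. I expect the cleanest completion is via the Bass--Serre tree of $\mathcal{B}_3 = \la\sA\ra \ast \la\sB\ra$: parabolic elements of $\PSL_2(\Z)$ are precisely those that are elliptic on this tree \emph{only after passing to a power}, i.e.\ act as bounded-orbit-on-horocycle but translate... no --- rather, parabolics fix a \emph{unique end} of $\mathfrak H$ corresponding to a cusp, and in tree terms a parabolic is conjugate to an element fixing an end of the tree, which a hyperbolic-on-tree element (cyclic syllable length $\ge 1$ and not conjugate into a factor) never does. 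Thus the single clean claim to verify is: the cyclically reduced syllable length of $\sW_{m,n}$ is $\ge 2$ (done above) \emph{and} $\sW_{m,n}$ is not a proper power of $\sB\sA$, which follows since $(\sB\sA)^k$ has $\epsilon$-image $((123)(13))^k = (12)^k \in\{\mathrm{id},(12)\}$, never $(132)$; combined with the classification of parabolic conjugacy classes in $\PSL_2(\Z)$ (each parabolic is conjugate to $(\sB\sA)^{\pm k}$ for some $k\ge1$), this gives $\sW_{m,n}$ non-parabolic, hence hyperbolic. I would present the argument in this order: (1) recall the trichotomy and that elliptic $\Leftrightarrow$ conjugate into a free factor $\Leftrightarrow$ cyclic syllable length $\le 1$; (2) use $\sW_{m,n}=\sH_{m,n}\sH_{n,m}$ with $\epsilon$-values to get cyclic syllable length $\ge 2$, killing elliptic; (3) recall parabolic conjugacy classes in $\PSL_2(\Z)$ are the $(\sB\sA)^{\pm k}$; (4) compute $\epsilon((\sB\sA)^k)\ne(132)=\epsilon(\sW_{m,n})$ to kill parabolic; conclude hyperbolic.
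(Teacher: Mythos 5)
Your overall strategy --- rule out elliptic and parabolic via the trichotomy in $\PSL_2(\Z)=\la\sA\ra\ast\la\sB\ra$ --- is the same as the paper's, and your parabolic step is correct and in fact slightly more direct: every parabolic is conjugate to a power of $\sB\sA^{-1}=\bsigma_2^{-1}$, and since $\epsilon(\sB\sA^{-1})$ is a transposition, no such power can have $\epsilon$-image equal to the $3$-cycle $(132)=\epsilon(\sW_{m,n})$. (The paper first passes through $\PSL_2(\Z)^{\ab}\cong\Z/6\Z$ to force $6\mid r$ and only then applies $\epsilon$; your shortcut is legitimate.)

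The gap is in the elliptic step. You assert that the cyclically reduced syllable length of $\sW_{m,n}$ is at least $2$ ``immediately'' from $\sW_{m,n}=\sH_{m,n}\sH_{n,m}$ with both factors nontrivial and the product nontrivial. That implication is false: already in $\la\sB\ra\cong\Z/3\Z$ the product of two nontrivial, non-cancelling elements can be the single syllable $\sB^2$. Worse, the invariant you invoke cannot in principle do the job, because $\epsilon(\sB^2)=(132)=\epsilon(\sW_{m,n})$: the homomorphism $\epsilon$ does not distinguish $\sW_{m,n}$ from a conjugate of the order-$3$ elliptic element $\sB^2$, which is precisely the case you must exclude. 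The paper excludes it with a finer invariant: a torsion element of $\bar\Gamma^2=\la\sB\ra\ast\la\sA\sB\sA\ra$ with $\epsilon$-image $(132)$ would have to be conjugate to $\sB^2$ or $\sA\sB\sA$, yet the symmetry $\sA\sW_{m,n}^{-1}\sA^{-1}=\sW_{m,n}$ of Lemma \ref{SymmetryH_mn}\,(i) forces the image of $\sW_{m,n}$ in $(\bar\Gamma^2)^{\ab}\cong(\Z/3\Z)^2$ to have the antidiagonal form $\sB^{x}(\sA\sB\sA)^{-x}$, which the images of $\sB^2$ and $\sA\sB\sA$ do not. An alternative repair closer to your free-product viewpoint: $\sW_{m,n}=\sH_{m,n}\sA\sH_{m,n}^{-1}\sA^{-1}$ is a commutator, nontrivial because $\epsilon(\sW_{m,n})\ne\mathrm{id}$, hence lies in the commutator subgroup of $\PSL_2(\Z)$, which is free and therefore torsion-free, so $\sW_{m,n}$ cannot be elliptic. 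Either way, some argument beyond $\epsilon$ is genuinely needed at this point.
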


\begin{proof}
It suffices to show that $\sW_{m,n}$ 
is neither a parabolic element nor an elliptic element
in $\mathcal{B}_3=\la\sA\ra\ast\la\sB\ra\cong\PSL_2(\Z)$.
Proposition \ref{doublePal} tells us that 
$\sW_{m,n}$ is in the form of a commutator element 
of $\PSL_2(\Z)$
so that it has trivial image under the 
abelianization $\PSL_2(\Z)^\ab\isom \Z/6\Z$.
Note that every parabolic element is conjugate to 
$(\sB\sA^{-1})^r$ for some $r\in\Z$ and that
the image of $\sB\sA^{-1}$ generates
 $\PSL_2(\Z)^\ab\cong \Z/6\Z$.
Thus, if $\sW_{m,n}$ is supposed to be parabolic, then 
$r$ should be a multiple of $6$.
But then, $\epsilon(\sW_{m,n})\sim 
\epsilon( (\sB\sA^{-1})^r)$ must be trivial
in $S_3$, which
is not the case as $\epsilon(\sW_{m,n})=(132)\in S_3$
by Proposition \ref{mainFormula} (iv).
Next, supposing that $\sW_{m,n}$ is an elliptic element of
$\bar\Gamma^2$, we shall derive a contradiction.
Every nontrivial torsion element in the free product
$\bar\Gamma^2=\la\sB\ra\ast\la \sA\sB\sA\ra$
is conjugate to
either one of $\sB,\sB^2,\sA\sB\sA$ or $\sA\sB^2\sA$.
From this together with the fact $\epsilon(\sW_{m,n})=(132)$,
we observe that $\sW_{m,n}$ should be conjugate to either 
$\sB^2$ or $\sA\sB\sA$ in $\bar\Gamma^2$.
Consider then
the image of $\sW_{m,n}$ in the abelianization 
$(\bar\Gamma^2)^\ab\cong \la\sB\ra\oplus\la\sA\sB\sA\ra
\cong (\Z/3\Z)^2$,
where the conjugation by $\sA(=\sA^{-1})$ interchanges 
components of the direct sum.
But Lemma \ref{SymmetryH_mn} (i) tells us that
$\sA\sW_{m,n}^{-1}\sA^{-1}=\sW_{m,n}$
whose image in $(\bar\Gamma^2)^\ab$ 
must have a form $\sB^x\cdot(\sA\sB\sA)^{-x}$
$(x\in\Z/3\Z)$, i.e.,
different from
those of $\sB^2$ or $\sA\sB\sA$.
This contradicts the above observation, hence, 
$\sW_{m,n}$ is not an elliptic element.
The proof of the corollary is completed.
\end{proof}

\subsection{Uniformization of the shape sphere} 
\label{sec3.2}
The collision-free locus $\mathbf{P}^1_\psi(\C)-\{1,\omega,\omega^2\}$
of the shape sphere $\mathbf{P}^1_\psi(\C)$
can be regarded as the moduli space of similarity classes of 
triangle triples (with distinct vertices) on the complex plane.
In fact, to any triangle triple $\Delta=(a,b,c)\in\Conf^3(\C)$, one
associates Legendre's $\lambda$-invariant
$\lambda:=\frac{b-c}{a-c}\in\C-\{0,1\}$ which represents the similarity class
of $\Delta$. 
The collection of those $\lambda$-invariants forms 
the moduli space $\mathbf{P}_\lambda^1-\{0,1,\infty\}$, which 
is naturally isomorphic to the above locus of the $\psi$-sphere $\mathbf{P}^1_\psi(\C)$
via
\begin{equation}
\label{lambda-psi}
\begin{matrix}
\mathbf{P}^1_\lambda(\C)-\{0,1,\infty\}
&\isom
&\mathbf{P}^1_\psi(\C)-\{1,\omega,\omega^2\} \\
\upin 
&
&\upin 
\\
\lambda
&\mapsto
&\psi=\frac{\omega\lambda+1}{\omega^2\lambda+1}
\end{matrix}
\end{equation}
On the other hand, the space $\mathbf{P}_\lambda^1-\{0,1,\infty\}$ may be regarded
as the well-known modular curve of level 2 
realized as the quotient of the upper half plane 
$\mathfrak{H}:=\{\tau\in\C\mid \mathrm{Im}(\tau)>0\}$
by the linear fractional transformations of 
the principal congruence subgroup 
$$\bar\Gamma(2)=
\left\{\left.
\pm\bmatx{a}{b}{c}{d} \in \PSL_2(\Z)
\ \right|\ 
a\equiv d\equiv 1,  b\equiv c\equiv 0\mod 2
\right\}
$$
of level 2.
Using the Weierstrass elliptic function $\wp(z)=\wp(z,L_\tau)$
defined for the lattice $L_\tau=\Z+\Z\tau\subset \C$, we
fix a standard uniformization so that 
$(\bI\infty,0)_\tau\mapsto (0,1)_\lambda$
by setting  $\tau\mapsto \lambda(\tau)=
\frac{\wp(\frac{\tau}{2})-\wp(\frac{\tau+1}{2})}{\wp(\frac{\tau}{2})-\wp(\frac{1}{2})}$.
The composition of $\lambda(\tau)$ with the above identification (\ref{lambda-psi})
gives a uniformization 
$\tilde{\psi}:\mathfrak{H}\twoheadrightarrow\mathbf{P}^1_\psi(\C)-\{1,\omega,\omega^2\}$
by the formula
\begin{equation}
\tilde{\psi}(\tau)=\frac{\wp(\frac{1}{2})+\omega\wp(\frac{\tau+1}{2})+\omega^2\wp(\frac{\tau}{2})}
{\wp(\frac{1}{2})+\omega^2\wp(\frac{\tau+1}{2})+\omega\wp(\frac{\tau}{2})}
\qquad (\tau\in\mathfrak{H}).
\end{equation}
In other words, $\tilde{\psi}(\tau)$ represents the value of the shape function 
$\psi(\Delta_\tau)$ at 
the triangle $\Delta_\tau=(\wp(\frac{1}{2}),\wp(\frac{\tau+1}{2}),\wp(\frac{\tau}{2}))$.
Under $\tilde\psi$, the ideal triangle $\varLambda^-$ with vertices $\{-1,0,\bI\infty\}$ 
(resp. $\varLambda^+$ with vertices $\{0,1,\bI\infty\}$) on $\mathfrak{H}$
bijectively maps onto 
the negative (resp. positive) hemisphere of 
$\mathbf{P}^1_\psi(\C)-\{1,\omega,\omega^2\}$
(cf. Figure \ref{Ex4-5fig}).

\begin{Remark}
\label{B_3toPSL2Z}
Let $\Delta(t)=(a(t),b(t), c(t))$ be a continuous collision-free family of triangles
on $t_0\le t\le t_1$ as in (\ref{general_motion}) with conditions imposed in 
its subsequent paragraph: namely, assuming  
(i) $\psi(\Delta(t_0))$ is in the region $F$ close to the point $\psi=\rho$;
(ii) the image $\bar\gamma_\Delta$ of the shape curve $\gamma_\Delta:=
\{\psi(\Delta(t))\}_t$ in $\mathbf{P}^1_{\infty 23}$ forms a closed curve.
Then, on one hand, $\bar\gamma_{\Delta}$ gives an element of the orbifold 
fundamental group $\pi_1^{orb}(\mathbf{P}^1_{\infty 23},\ast)$ that is
identified with $\mathcal{B}_3$ in the way illustrated in the paragraph 
before Remark \ref{standardS_3}.
On the other hand, noting $\tilde{\psi}(\bI)=\rho$, 
we have a lift $\breve{\gamma}_\Delta$
of the curve $\gamma_\Delta$ on $\mathfrak{H}$ so that 
its starting point $\breve{\gamma}_\Delta(t_0)$ lies in $\Lambda^-$.
Then the location of the endpoint $\breve{\gamma}_\Delta(t_1)$ is
a unique point in the ideal triangle $\brho(\bar\gamma_{\Delta})(\Lambda^-)$
that is equivalent to $\breve{\gamma}_\Delta(t_0)$ by a matrix 
action of $\brho(\bar\gamma_{\Delta})\in \PSL_2(\Z)$ on $\mathfrak{H}$,
where the representation
$\brho: \mathcal{B}_3\isom\PSL_2(\Z)$ is given by
\begin{equation}
\label{matrix_brho}
\bsigma_1\mapsto \bmatx{1}{0}{-1}{1}, \quad
\bsigma_2\mapsto \bmatx{1}{1}{0}{1}.
\end{equation}
Henceforth we shall normalize the identification $\mathcal{B}_3=\PSL_2(\Z)$ 
by this $\brho: \mathcal{B}_3\isom\PSL_2(\Z)  $ so that 
$\breve{\gamma}_\Delta(t_1)\in \brho(\bar\gamma_{\Delta})(\Lambda^-)$
holds for $\gamma_\Delta\in \mathcal{B}_3$ arising from 
every continuous family $\{\Delta(t)\}_{t_0\le t\le t_1}$ with the
above assumption.
\end{Remark}

Now, given a collision-free triangle motion $\{\Delta(t)\}_t$
on the Lissajous curve of type $(m,n)$, 
we have a shape curve $\{\psi(\Delta(t))\}_{0\le t\le\frac13}$ on
$\mathbf{P}^1_\psi(\C)-\{1,\omega,\omega^2\}$ starting from 
$\rho$ and ends in $-1$.
In order to get a closed curve, it is natural to consider 
the quotient space $\mathbf{P}^1_{\infty 33}$
of $\mathbf{P}^1_\psi(\C)-\{1,\omega,\omega^2\}$ by the
action of the cyclic subgroup $A_3(\subset S_3)$.
This is an orbifold with one cusp and two elliptic points of order 3,
and the complex points of $\mathbf{P}^1_{\infty 33}$ are 
formed by the cube
$\psi^3(\Delta)$ of the shape function over the
collision-free triangle triples $\Delta$.

\begin{equation}
\begin{tikzcd}
\mathfrak{H} \arrow[rr, "\tilde{\psi}", two heads] \arrow[rr, "\bar\Gamma(2)" description, no head,bend left=50] 
\arrow[rrr, "\bar\Gamma^2" description, no head,bend right] 
\arrow[rrrr, "\mathrm{PSL}_2(\mathbb{Z})" description, no head,bend right=49] &  
& {\mathbf{P}^1_\psi-\{1,\omega,\omega^2\}} \arrow[r, "\psi\mapsto\psi^3",  two heads] 
\arrow[r, "A_3", no head, bend left=45] &
 \mathbf{P}^1_{\infty 33} \arrow[r, two heads] \arrow[r, "S_3/A_3", no head, bend left=49] 
& \mathbf{P}^1_{\infty 23}
\end{tikzcd}
\end{equation}
Thus, 
according to Proposition \ref{mainFormula} (iii), 
$\{\psi^3(\Delta(t))\}_{0\le t\le\frac13}$ forms a closed curve on 
$\mathbf{P}^1_{\infty 33}$
which determines an oriented closed geodesic $\gamma_{m,n}$ there.
The preimage of 
$\gamma_{m,n}$ in the upper half plane 
$\mathfrak{H}$ 
has then a unique connected component $\tilde{\gamma}_{m,n}$
whose endpoints are fixed by the hyperbolic matrix 
$\sW_{m,n}\in\PSL_2(\Z)$.
It follows from $\sW_{m,n}=\sA\sW_{m,n}^{-1}\sA^{-1}$
(Lemma \ref{SymmetryH_mn} (i)) that the geodesic 
$\tilde{\gamma}_{m,n}$ is invariant under the elliptic 
transformation $\sA$ so as to  
pass the unit point $\bI$ on the imaginary axis on $\mathfrak{H}$.
This, in turn, implies that the above closed geodesic
$\gamma_{m,n}$ passes $\psi^3(=\rho^3)=-1$ on $\mathbf{P}^1_{\infty 33}$.
In other words, $\tilde{\gamma}_{m,n}$ is the unique component 
passing $\bI\in\mathfrak{H}$ in the preimage of $\gamma_{m,n}$.

\begin{Definition} \label{lift_on_H}
Let $\{\Delta(t)\}_t$ be the collision-free triangle motion 
on the Lissajous curve of type $(m,n)$.
We call the infinite geodesic $\tilde{\gamma}_{m,n}$ on $\mathfrak{H}$ 
defined as above the {\it standard infinite lift 
of the Lissajous shape curve} of type $(m,n)$.
\end{Definition}

\begin{Remark} 
The space
$\mathbf{P}^1_{\infty 33}$ may be regarded as the moduli space of similarity classes
of plane triangles with cyclically-ordered vertices.
An elementary treatment of the moduli disc $\{\psi^3\}$ 
of the similarity classes of {\it positively oriented}
triangles can be traced back to, e.g., \cite{NOg}.  
The modular curve $\mathfrak{H}/\bar\Gamma^2$
corresponding to $\mathbf{P}^1_{\infty 33}$ plays useful roles
to deduce equations of the image of $\Gal(\bQ/\Q)$ in the
Grothendieck-Teichm\"uller group (\cite{NT03}).
\end{Remark}

\subsection{Frieze patterns}
\label{sectFrieze}
It is convenient to express $\sW_{m,n}\in\bar\Gamma^2$ 
in a more visual-friendly manner as follows. 
Introduce four symbols
$\bb,\dd,\pp,\qq$ to designate respectively
$\sB^2,\sA\sB^2\sA,\sB,\sA\sB\sA$ of order 3.
Due to the free-product expression 
$\bar\Gamma^2=\{id, \pp, \bb\}\ast \{id, \qq,\dd\}$,
every nontrivial element of $\bar\Gamma^2$ can be written
uniquely as a word in the symbols $\bb,\dd,\pp,\qq$.
Motivated by the seminal work by Conway-Coxeter \cite{CC73},
we shall call such an expression a (reduced)
{\it frieze pattern} in $\bb,\dd,\pp,\qq$
or a $\bb\dd\pp\qq$-word in short.

Our Lissajous 3-braid $\sW_{m,n}$ expressed as in
Proposition \ref{mainFormula} has an even number of 
appearances of $\sA$, thus it can be easily
translated to a word in $\bb,\dd,\pp,\qq$.
In more pictorial words, if $\sW_{m,n}$ is 
produced from a path on the dual graph (\ref{groupoid}) 
from the vertex $[\RN{1}^-]$ to $[\RN{2}^-]$, 
then following the corresponding path in the
diagram (\ref{friezeDiagram}) gives an expression
in $\bb,\dd,\pp,\qq$, where
each pair of vertices $[\RN{10}^+]$, $[\RN{10}^-]$ is combined to form 
one component for $\RN{10}=\RN{1},\RN{2},\RN{3}$ respectively.
We interpret that the effects of the bonds $\sA(=\sA^{-1})$ between 
$[\RN{10}^+]$ and $[\RN{10}^-]$ in the left diagram of
(\ref{groupoid}) are absorbed in the upper part
edges $\dd$, $\qq$ of the diagram (\ref{friezeDiagram}).  
\begin{equation}
\label{friezeDiagram}
\begin{tikzcd}
\frac{\boxed{[\RN{3}^+]}}{\boxed{[\RN{3}^-]}}  
\arrow[r, "\leftarrow (\pp)-"', no head, bend right=60] 
\arrow[rr, "\leftarrow (\bb) -", no head, bend right=100] 
\arrow[r, "\leftarrow (\dd)- ", no head, bend left=60] 
\arrow[rr, "-(\dd) \to", no head, bend left=100] 
&\frac{\boxed{[\RN{1}^+]}}{\boxed{F=[\RN{1}^-]}}  
\arrow[r, "\leftarrow (\dd)- ", no head, bend left=60] 
\arrow[l, "-(\qq) \to ", no head, bend right=60] 
\arrow[l, "-(\bb) \to"', no head, bend left=60] 
&\frac{\boxed{[\RN{2}^+]}}{\boxed{[\RN{2}^-]}} 
\arrow[l, "-(\qq) \to ", no head, bend right=60] 
\arrow[l, "\leftarrow(\pp) - ", no head, bend left=60] 
\arrow[l, "-(\bb) \to"', no head, bend left=60] 
\arrow[ll, "- (\pp)\to", no head, bend left=100] 
\arrow[ll, "\leftarrow (\qq)-", no head, bend right=100] 
\end{tikzcd}
\end{equation}
For example, $\sW_{1,-2}=\sA\sB\sB\sA\sB$ from Example \ref{ex:m=1} 
tracks the path:
\begin{equation}
\begin{tikzcd}
{[\RN{1}^-]} \arrow[r, "\sA"] \arrow[rrr, "\dd" description, bend right] 
& {[\RN{1}^+]} \arrow[r, "\sB\sB"] & {[\RN{3}^+]} \arrow[r, "\sA"] 
& {[\RN{3}^-]} \arrow[r, "\sB"] \arrow[r, "\pp" description, bend right=60] 
& {[\RN{2}^-]}.
\end{tikzcd}
\end{equation}

Once again recalling that $\sW_{m,n}$ has an
even number of appearances of $\sA$, one
can translate data of the sequence 
$(\eps_k)\in\{\pm 1\}^{2|m|}$ 
of Proposition \ref{mainFormula} 
together with $\sgn(m)=\pm 1$ 
into the four symbol
frieze pattern according to Table \ref{table1},
where the second column for $\sgn(m)\cdot \eps_k$
indicates whether $\sB^{\eps_k}$ is located between 
paired $\sA$'s in $\sW_{m,n}$ ($k=1,\dots,2|m|$) or not.
In fact, the parity of the number of $\sA$'s appearing 
before $\sB^{\eps_k}$ in $\sW_{m,n}$ is 
the same as that of  
$S_k:=\frac{1-\sgn(m)\eps_1}{2}+\sum_{s=1}^{k-1}
\frac{\eps_s-\eps_{s+1}}{2}$.
Then, noting $(-1)^{\frac{\eps-\eps'}{2}}=\eps\cdot\eps'$ for
$\eps,\eps'=\pm 1$, we derive 
$(-1)^{S_k}=\sgn(m)\cdot \eps_k$ for a test to determine
which of $\{\sB^{\eps_k}, \sA\sB^{\eps_k}\sA\}$ 
should be counted for the induced factor from $\eps_k$.

The first half word $\sH_{m,n}$ defined in 
Proposition \ref{doublePal}
can also be converted to a $\bb\dd\pp\qq$-word by tracing
a corresponding path in (\ref{friezeDiagram}) from $[\RN{1}^-]$ to 
$[\RN{3}^-]$.
The second half word $\sA\sH_{m,n}^{-1}\sA^{-1}$ of $\sW_{m,n}$
forms a path from $[\RN{3}^-]$ to $[\RN{2}^-]$ which can be
reconstructed from the first half $\sH_{m,n}$ as follows: 
Consider the horizontally mirrored path of $\sH_{m,n}$ 
from $[\RN{1}^+]$ to $[\RN{3}^+]$ and conjugate
it by $\sA$ to form a path from  $[\RN{1}^-]$ to $[\RN{3}^-]$.
The desired path $\sA\sH_{m,n}^{-1}\sA^{-1}$ 
from $[\RN{3}^-]$ to $[\RN{2}^-]$ is just its shift
obtained by cyclically rotating the vertex labels in all the way
with 
$[\RN{3}^\pm]\to[\RN{2}^\pm]\to [\RN{1}^\pm]\to [\RN{1}^\pm]$.

\begin{table}[H]
\caption[Translation from $(\sgn(m);\eps_1,\dots,\eps_{2|m|})$ to $\bb\dd\pp\qq$-word.]
{Translation from $(\sgn(m);\eps_1,\dots,\eps_{2|m|})$ to $\bb\dd\pp\qq$-word.}
 \label{table1}  
$$
\begin{array}{|c|c|c|c|c|c|}\hline
\eps_k
& \sgn(m)\cdot \eps_k 
& \sA\sB\text{-word} 
& \mathcal{B}_3
& \text{frieze symbol}  {\rule{0pt}{2.8ex}}
& \PSL_2(\Z)
\\ 
\hline\hline
1 & 1 & \sB\,(=\sB^{-1}\sB^{-1}) & \bsigma_1\bsigma_2 & \pp 
& \pm\bmatx{1}{1}{-1}{0} {\rule[-3mm]{0pt}{5ex}}
\\ \hline
-1 & 1 & \sB^{-1}\,(=\sB\sB) & (\bsigma_1\bsigma_2)^2 & \bb {\rule{0pt}{2.5ex}}
& \pm\bmatx{0}{1}{-1}{-1} {\rule[-3mm]{0pt}{5ex}}
\\ \hline
1 & -1 & \sA\sB\sA\,(=\sA\sB^{-1}\sB^{-1}\sA)  & \bsigma_2\bsigma_1 & \qq 
& \pm\bmatx{0}{-1}{1}{-1} {\rule[-3mm]{0pt}{5ex}}
\\ \hline
-1 & -1 & \sA\sB^{-1}\sA\,(=\sA\sB\sB\sA)  & (\bsigma_2\bsigma_1)^2 & \dd  {\rule{0pt}{2.5ex}}
& \pm\bmatx{1}{-1}{1}{0}  {\rule[-3mm]{0pt}{5ex}}
\\ \hline
\end{array}
$$
\end{table}


\begin{Lemma}
\label{Hshape0}
The first half $\sH_{m,n}$ of $\sW_{m,n}$ can be represented by
a palindromic frieze pattern in $\bb,\dd,\pp,\qq$ of odd length.
Moreover, the second half $\sH_{n,m}=\sA\sH_{m,n}^{-1}\sA$ 
of $\sW_{m,n}$ as a $\bb\dd\pp\qq$-word
is obtained from that of the first half $\sH_{m,n}$ by applying
the interchanges $\pp\leftrightarrow \dd$ and 
$\bb\leftrightarrow \qq$.
\end{Lemma}

\begin{proof}
By Proposition \ref{doublePal}, the sequence
$(\eps_1,\dots,\eps_{|m|})\in\{\pm 1\}^{|m|}$ is
palindromic. This together with Table \ref{table1}
enables one to express $\sH_{m,n}$ as a palindromic 
word in $\bb,\dd,\pp,\qq$.
The (reduced) frieze pattern of $\sH_{m,n}$
is obtained from this palindromic expression 
by repeatedly applying the relations 
$\pp^3=\qq^3=\bb^3=\dd^3=1$, 
$\pp^2=\bb, \bb^2=\pp$, $\qq^2=\dd, \dd^2=\qq$.
(Note that $\bar\Gamma^2$ is the free product of the 
two cyclic groups $\{id, \pp,\bb\}$ and $\{id,\qq,\dd\}$
of order 3).
We can perform this process in palindromic ways 
to get a reduced palindromic word which is 
necessarily of odd length.
The second assertion follows from the comparison
of the paths $\sH_{m,n}$ and $\sA\sH_{m,n}^{-1}\sA^{-1}$ 
in the diagram (\ref{friezeDiagram}) illustrated
in the preceding paragraph of the lemma.
\end{proof}

\begin{Remark}
The bi-infinite repetition of the $\bb\dd\pp\qq$-word
created by the Lissajous 3-braid $W_{m,n}$ is a frieze pattern
of type `$22\infty$' 
in Conway's notation \cite[p.446]{C92}.
\end{Remark}

\subsection{Relation to cutting sequence}
\label{sec.BMgeodesic}
Before proceeding to the next section, let us mention
how $\bb\dd\pp\qq$-words could fit in geometrical insights
along the lines of the famous seminal work by C.Series \cite{Se85}.
F.P.Boca and C.Merriman (\cite{BM18}) studied closed geodesics
on $\mathfrak{H}/\bar\Gamma^2=\mathbf{P}^1_{\infty 33}$ and their lifts to infinite geodesics 
on the upper half plane $\mathfrak{H}$. The latter geodesics 
produce periodic cutting sequences in four
white and black LR(left, right)-symbols
$\mathbb{L}, \mathbf{L}, \mathbb{R}, \mathbf{R}$
representing how they across triangles of 
the {\it checkered} Farey tessellation of $\mathfrak{H}$.

Suppose we are given a shape curve $\psi(\Delta(t))$ for a collision-free
3-body motion $\Delta(t)$ on the Lissajous curve of type $(m,n)$
on the shape sphere $\mathbf{P}^1_\psi-\{1,\omega,\omega^2\}$.
As discussed in \S \ref{sec3.2}, 
we have a closed curve $\psi^3(\Delta(t))$ on the orbifold 
$\mathbf{P}^1_{\infty 33}$ on one hand, 
and have the standard infinite lift $\tilde{\gamma}_{m,n}(t)$
$(t\in\R)$
on $\mathfrak{H}$ on the other hand as in
Definition \ref{lift_on_H}.
The endpoints of $\tilde{\gamma}_{m,n}(t)$ as $t\to\pm\infty$ are
the two real fixed points of the 
linear fractional transformation given by 
the matrix representation of $\sW_{m,n}\in\PSL_2(\Z)$ 
which is hyperbolic
as shown in 
Corollary \ref{hyperbolicCor}.
Then, the symbols $\bb,\dd,\pp,\qq$ in our frieze pattern
correspond to $\mathbb{L}, \mathbf{L}, \mathbb{R}, \mathbf{R}$
respectively: tail (resp. head) position of tadpole-like symbol
$\bb,\dd,\pp,\qq$ indicates side of triangle cut (resp. 
parity of checkered tile). This notation follows from observation that
each of the symbols $\bb$,$\dd$ (resp. $\pp$,$\qq$)
labels a segment of the geodesic $\tilde{\gamma}_{m,n}$
cutting across a white or black triangle of the checkered Farey tessellation
with seeing the vertex of the triangle cut off by
the segment on the left (resp. on the right).
%

In the work of Boca-Merriman \cite{BM18}, 
explicit relations are also figured out between the period of 
such a cutting sequence and the partial quotients of the 
odd continued fraction expansion of the quadratic 
irrational appearing as one of the end points of a geodesic.
We may apply their theory to our special family of
Lissajous 3-braids.

\begin{figure}[h]
\begin{center}
\begin{tabular}{cc}
\includegraphics[width=0.35\textwidth]{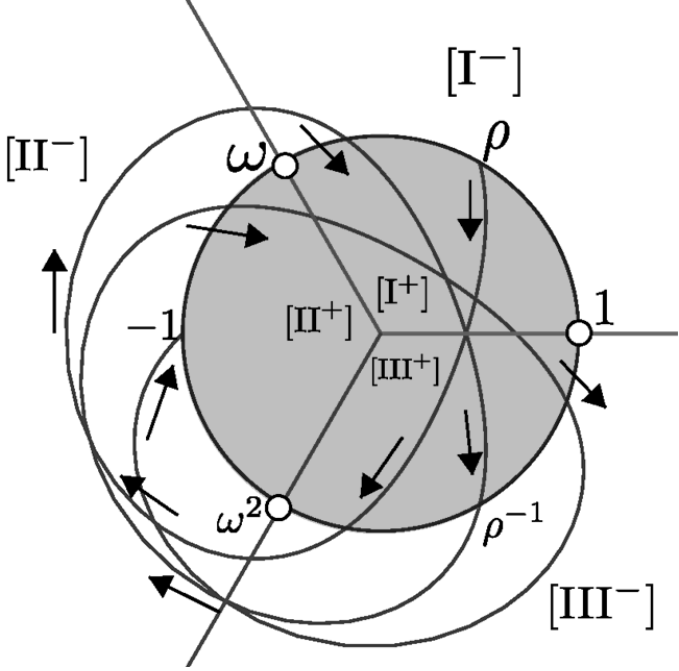} &
\quad
\includegraphics[width=0.45\textwidth]{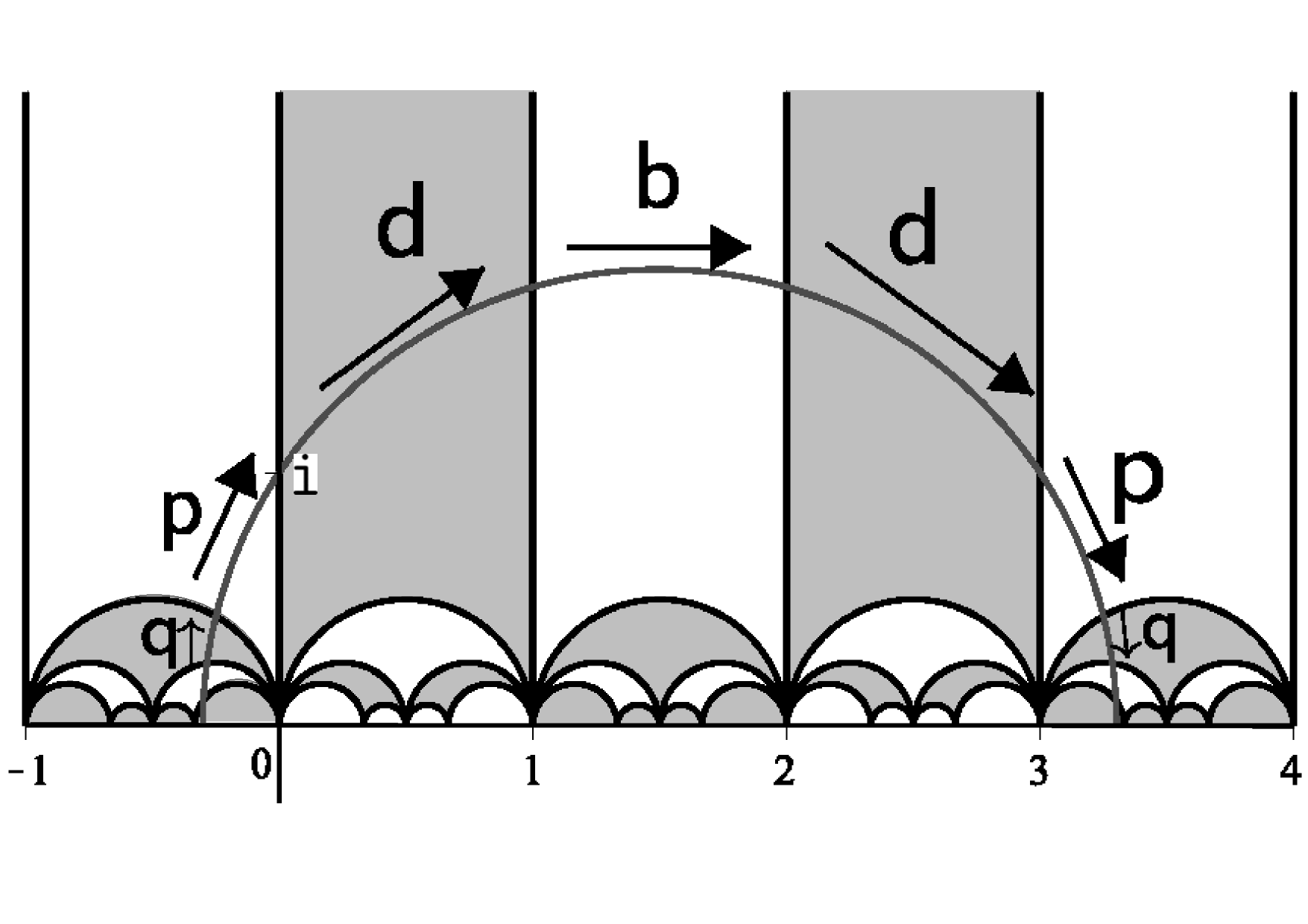}
\end{tabular}
\end{center}
\caption{Lissajous shape curve 
$\{\psi(\Delta(t))\}_{0\le t\le\frac13}$ 
in the case $(m,n)=(4,-5)$ [left] and the standard 
infinite lift 
$\tilde{\gamma}_{4,-5}$ of the Lissajous shape curve 
in $\mathfrak{H}$ 
with frieze pattern $\dd\bb\dd\cdot\pp\qq\pp$ [right].
The hyperbolic triangle $0,1,\bI \infty$ on $\mathfrak{H}$ [right] 
lifts 
the shaded disk (positive hemisphere) on the 
shape sphere 
$\mathbf{P}^1_\psi-\{1,\omega,\omega^2\}$ [left].
}
\label{Ex4-5fig}
\end{figure}

In Figure \ref{Ex4-5fig} is illustrated the frieze pattern $\dd\bb\dd\cdot\pp\qq\pp$
arising from $\sW_{4,-5}$ (accordingly $\sH_{4,-5}=\dd\bb\dd$).
The standard infinite lift $\tilde{\gamma}_{4,-5}$ of the Lissajous shape curve 
forms a geodesic on $\mathfrak{H}$.
It has endpoints $\frac{3\pm\sqrt{13}}{2}$ as fixed points of the matrix
$\sW_{4,-5}=\pm\bmatx{10}{3}{3}{1}\in\PSL_2(\Z)$ and passes $\bI$
on the imaginary axis 
to cut triangles along the checkered Farey tessellation 
first seeing the cusp $\tau=\bI\infty$ three times on the left
and then seeing the cusp $\tau=3$ three times 
on the right. 
This can be read off from the odd continued fraction
(cf.\,also\,Remark \ref{BocaOCF}):
$$
\frac{3+\sqrt{13}}{2}=3+\cfrac{1}{3 +\cfrac{1}{3 
+\cfrac{1}{3 + \ddots}}} 
\quad .  
$$
We hope to present these computational aspects to some more extent
in our future work with elaboration on the modern computer algebra system.

\section{Level and slope}
In this section, we shall classify all possible Lissajous 3-braid classes.
By virtue of Proposition \ref{mainFormula} (i) and Proposition \ref{doublePal}, 
this amounts to know the collection
$\mathscr{H}\subset \mathcal{B}_3$ of all
$\sH_{m,n}\in\mathcal{B}_3$ with $\gcd(m,n)=1$, 
$m\equiv n\equiv 1$ mod $3$ and with
odd $\ell:=(m-n)/3$.

\subsection{Reduction to a subfamily}

Define a subfamily of $\mathscr{H}$ by
$$
\mathscr{H}_0:=\{\sH_{m,n}, \sH_{n,m}\in\mathscr{H}\mid mn<0, 
|m|<|n|\le 2|m|\}.
$$
The following lemma reduces our task to focusing on 
$\mathscr{H}_0$:

\begin{Lemma} 
\label{ReductionH0}
\begin{roster}
\item[(i)] 
For $\sH_{m,n}\in \mathscr{H}$ we have:
\begin{equation*}
\sH_{m,n}=
\begin{cases}
\sH_{m,n-6mj} & (j:\text{even}), \\
\sH_{n-6mj,m} & (j:\text{odd});
\end{cases}
\qquad
\sH_{m,n}=
\begin{cases}
\sH_{m,2m-n+6mj} & (j:\text{odd}), \\
\sH_{2m-n+6mj,m} & (j:\text{even}). 
\end{cases}
\end{equation*}
\item[(ii)]
$\mathscr{H}=\mathscr{H}_0$.
\end{roster}
\end{Lemma}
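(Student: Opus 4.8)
The plan for (i) is a direct verification from the explicit formula of Proposition~\ref{doublePal}, preceded by the routine remark that each substitution occurring there preserves $\gcd(m,n)=1$, the congruence $m\equiv n\equiv 1\pmod 3$, and the parity of $\ell=(m-n)/3$, so that all the right-hand sides do lie in $\mathscr{H}$. Recall that $\sH_{m,n}$ is completely determined by $\sgn(m)$, $\sgn(\ell)$, $|m|$ and the $01$-sequence $\eps'_k\equiv\lfloor\frac{|\ell|(2k-1)}{2|m|}\rfloor\pmod 2$ ($1\le k\le|m|$), via $\eps_k=\sgn(m\ell)(2\eps'_k-1)$. Viewed as a function of $L:=|\ell|$ with $|m|$ fixed, this sequence obeys two elementary identities: since adding $2|m|$ to $L$ adds the \emph{odd} number $2k-1$ to the argument of the floor, $\eps'_k(L+2|m|)\equiv 1+\eps'_k(L)\pmod 2$ (so its period is $4|m|$); and since $\ell$ odd makes every $\frac{L(2k-1)}{2|m|}$ non-integral, $\lceil x\rceil=\lfloor x\rfloor+1$ gives $\eps'_k(4|m|-L)\equiv 1+\eps'_k(L)$. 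These dispose of the two identities in which $m$ is unchanged — the first one with $j$ even ($\ell\mapsto\ell+2mj$, altering $|\ell|$ by a multiple of $4|m|$ possibly composed with $L\mapsto 4|m|-L$) and the second one with $j$ odd ($\ell\mapsto-\ell-2mj$): in each case any global flip of the $\eps'_k$ is exactly undone by the simultaneous sign change of $\sgn(m\ell)$, leaving every $\eps_k$, hence $\sH_{m,n}$, untouched.

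The two remaining identities of (i) interchange the two subscripts, so the ``length parameter'' in Proposition~\ref{doublePal} jumps from $|m|$ to $|n-6mj|$ (resp.\ $|2m-n+6mj|$). I would isolate the base case $\sH_{m,n}=\sH_{2m-n,m}$ — the $j=0$ instance of the second identity — and prove it by comparing frieze patterns directly: write out the (formally longer) $\sB^{\pm1}$-word of $\sH_{2m-n,m}$ and collapse its runs of equal $\eps$'s using only $\sB^3=1$ and $\sA^2=1$; the combinatorics of the collapse is precisely the block-by-block comparison of the two integer staircases $k\mapsto\lfloor\frac{|\ell|(2k-1)}{2|m|}\rfloor$ and $k\mapsto\lfloor\frac{|\ell|(2k-1)}{2|2m-n|}\rfloor$, and the two reduced words are seen to coincide (Lemma~\ref{Hshape0} guarantees uniqueness of the reduced form). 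All the interchanging identities then follow by composing this base swap with the already-established $m$-fixed ones.

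For (ii), the inclusion $\mathscr{H}_0\subseteq\mathscr{H}$ is immediate from the definition, so the content is $\mathscr{H}\subseteq\mathscr{H}_0$, which I would obtain by descent. By Lemma~\ref{SymmetryH_mn}(i) we have $\sH_{n,m}=\sA\sH_{m,n}^{-1}\sA^{-1}$, and $\mathscr{H}_0$ is visibly stable under the involution $\sH\mapsto\sA\sH^{-1}\sA^{-1}$ (it contains both $\sH_{m,n}$ and $\sH_{n,m}$ for each admissible pair); hence, at the cost of this height-preserving involution, one may always assume $|m|<|n|$. For such a pair, the $m$-fixed identities of (i) realize the infinite-dihedral action $\ell\mapsto\ell+4m\Z$, $\ell\mapsto 2m-\ell$ on $\ell=(m-n)/3$, and normalizing $\ell$ to minimal absolute value yields $|\ell|\le|m|$. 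A short case analysis on $\sgn(\ell)$ versus $\sgn(m)$ then shows that the normalized pair either already satisfies $mn<0$ and $|m|<|n|\le 2|m|$ (resp.\ the swapped inequalities $|n|<|m|\le 2|n|$) — these being exactly what the sub-ranges $\frac23|m|<|\ell|\le|m|$ resp.\ $\frac12|m|\le|\ell|\le\frac23|m|$ produce — or else admits one swap (the $j=0$ second identity of (i), or its inverse) which, together with re-imposing $|m|<|n|$ and re-normalizing $\ell$, strictly decreases $\max(|m|,|n|)$. As the height cannot decrease indefinitely, the procedure terminates, and the terminal pair lies in $\mathscr{H}_0$.

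The main obstacle I anticipate is two-fold. In (i) it is the ``frieze collapse'' proving the base swap $\sH_{m,n}=\sH_{2m-n,m}$, i.e.\ showing that the formally longer $\sB$-word genuinely reduces, under $\sB^3=\sA^2=1$, to the reduced word of $\sH_{m,n}$ — this requires careful book-keeping of how the two staircases align. In (ii) it is that single identities need not lower $\max(|m|,|n|)$ (for instance $\sH_{7,-2}=\sH_{1,-2}$, yet no single identity issued from $(7,-2)$ decreases the height), so that one must interleave the involution of Lemma~\ref{SymmetryH_mn}(i) with the dihedral normalization before a genuine descent becomes available, and then confirm that the terminal configuration really does satisfy the sharp bound $|n|\le 2|m|$.
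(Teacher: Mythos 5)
Your handling of the two $m$-fixed identities in (i) is correct and is essentially the paper's own argument: the congruences $\eps'_k(L+2|m|)\equiv\eps'_k(L)+1$ and $\eps'_k(-L)\equiv\eps'_k(L)+1\pmod 2$, played off against the simultaneous change of $\sgn(m\ell)$, leave every $\eps_k$ (and hence the word) untouched; and your descent in (ii) is the paper's descent. The genuine gap is in the two subscript-interchanging identities of (i). You reduce them to the base swap $\sH_{m,n}=\sH_{2m-n,m}$ and propose to prove that by comparing the integer staircases of the two \emph{different} lengths $|m|$ and $|2m-n|$ and collapsing the formally longer $\sB$-word --- a step you yourself flag as the main obstacle and do not carry out. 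As written this is an assertion, not a proof, and it is by far the hardest available route (one would also have to track the boundary $\sA$-exponents through the collapse, not just the $\sB$-runs).

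The paper disposes of the swap with a tool you already cite in part (ii), namely Lemma \ref{SymmetryH_mn}(i): $\sH_{n,m}=\sA\,\sH_{m,n}^{-1}\sA^{-1}$. Your own computation shows that for $j$ odd (resp.\ under $\ell\mapsto-\ell$) the net effect on the data $\bigl(\sgn(m\ell),(\eps'_k)\bigr)$ is a \emph{global negation} of the half-sequence $(\eps_k)_{k=1}^{|m|}$. By the palindromicity $\eps_k=\eps_{|m|+1-k}$ of Proposition \ref{doublePal}, the word of Proposition \ref{doublePal} built (still with length parameter $|m|$) from the negated sequence is literally $\sA\,\sH_{m,n}^{-1}\sA^{-1}$ --- this is exactly the displayed computation of $\sH_{m,n}^{-1}$ in the proof of Proposition \ref{doublePal} --- hence equals $\sH_{n,m}$. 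So for $j$ odd one gets $\sH_{m,n-6mj}=\sA\,\sH_{m,n}^{-1}\sA^{-1}$, and applying Lemma \ref{SymmetryH_mn}(i) once more, now to the pair $(m,n-6mj)$, yields $\sH_{n-6mj,m}=\sA\,\sH_{m,n-6mj}^{-1}\sA^{-1}=\sH_{m,n}$; the identity $\sH_{m,n}=\sH_{2m-n+6mj,m}$ for $j$ even follows in the same way. No comparison of staircases of different lengths is ever needed. Replacing your ``frieze collapse'' by this two-line argument closes the gap; the rest of your proposal then goes through.
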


Before going to the proof, we introduce the following notation.
\begin{Notation}
\label{HalfEpsSeq}
Under the same assumption on $(m,n)$ and $\ell:=(m-n)/3$, we write 
the (half) sequence $(\eps_1',\dots,\eps'_{|m|})$ 
introduced in Proposition \ref{mainFormula}
as 
$$
\eps'[m,\ell]=(\eps_1^{\prime[m,\ell]},\dots,\eps_{|m|}^{\prime[m,\ell]})\in
\{0,1\}^{|m|}
$$
whose entries are given by
$$
\eps_k^{\prime[m,\ell]}\equiv\left\lfloor
\frac{|\ell|}{|m|}k-\frac{|\ell|}{2|m|}
\right\rfloor
\mod 2
\qquad (k=1,\dots,|m|).
$$
We then write 
$\eps[m,\ell]=(\eps_1^{[m,\ell]},\dots, \eps_{|m|}^{[m,\ell]})\in \{\pm 1\}^{|m|}$ 
for the corresponding (half) sequence
where
$$
\eps_k^{[m,\ell]}=\sgn(m\ell)\cdot (2\eps_k^{\prime[m,\ell]}-1)
\quad (k=1,\dots,|m|).
$$
\end{Notation}

\begin{proof} [Proof of Lemma \ref{ReductionH0}]
(i)
Let $1\le k\le |m|$. By simple computations we find:
\begin{equation}
\eps_k^{\prime[m,\ell+2m]}
\equiv 
\left\{
\begin{matrix}
\eps_k^{\prime[m,\ell]}+1 
\pmod 2,
&\quad (|\frac{\ell}{m}+1|>1);
\\
\eps_k^{\prime[m,\ell]} 
\pmod 2,
&\quad (|\frac{\ell}{m}+1|<1).
\end{matrix}
\right.
\end{equation}
From this, for fixed $m,\ell$, we obtain identities of sequences:
$\eps[m,\ell+2mj]=(-1)^{j}\eps[m,\ell]$ ($j\in\Z$).
Noticing that 
$\frac12(1-\sgn(m)\eps_1^{[m,\ell+2mj]})\equiv
\frac12(1-\sgn(m)\lfloor |(\ell/2m)+j|\rfloor)$ (mod $2$) 
alters according to the
parity of $j$,
we recover from Proposition \ref{doublePal} and Lemma \ref{SymmetryH_mn}
the form of $\sH_{m,N}$: for fixed $m$ with moving $N=m-3(\ell+2mj)$ over $j\in\Z$,
$\sH_{m,N}$ alternatively takes one of $\sH_{m,n}$ or $\sH_{n,m}$ 
according to the parity of $j$. From this the left hand side of (i) follows. 
This together with $\eps'[m,\ell]=\eps'[m,-\ell]$
derives the right hand side of (i).

(ii) Our principle here is to reduce any given $\sH_{m,n}\in \mathscr{H}$ to
an element of $\mathscr{H}_0$ by repeatedly applying (i) and 
the reciprocity law (Proposition \ref{reciprocity}).
The latter law is used in the form that the identity 
$\sH_{n,m}=\sA\sH_{m,n}^{-1}\sA$
(Lemma \ref{SymmetryH_mn}(i)) implies that if
$\sH_{m,n}\in\{\sH_{\mu,\nu},\sH_{\nu,\mu}\}$ then
$\{\sH_{m,n},\sH_{n,m}\}=\{\sH_{\mu,\nu},\sH_{\nu,\mu}\}$.
Thus, without loss of generality, we may start from $\sH_{m,n}$
with $|m|<|n|$.
Let $\ell:=(m-n)/3$. 
As shown in the above proof for (i), it is not difficult
to find $\ell'\in\pm \ell+2m\Z$ with $m\ell'>0$, $0<|\ell'|\le |m|$
such that the corresponding $n'=m-3\ell'$ gives $\sH_{m,n'}
\in\{\sH_{m,n},\sH_{n,m}\}$. 
But if $|\ell'|<\frac23|m|$ then $|n'|<|m|$, which means that 
the reciprocity identity allows us to continue the same argument
after substituting $\sH_{n',m}$ for $\sH_{m,n}$ 
with smaller size of the role of $|m|$.
This process should terminate eventually at getting some
$(m_0,n_0)$ with odd $\ell_0=(m_0-n_0)/3$
satisfying $\frac23 |m_0|\le |\ell_0|\le |m_0|$ and $m_0\ell_0>0$
after a finite number of repetitions.
But $\frac23 |m_0|\ne |\ell_0|$, as the equality implies 
$|n_0|=|m_0-3\ell_0|=|-m_0|$
contradicting the fact that
$m_0-n_0=3\ell_0\not\equiv 0$ mod $2$.
Therefore $\frac23 |m_0|< |\ell_0|\le |m_0|$ with $m_0\ell_0>0$.
Then $|m_0|< |n_0|\le 2|m_0|$ with $m_0n_0<0$, hence
$\sH_{m_0,n_0}\in\mathscr{H}_0$.
\end{proof}

\begin{Example}
{}In Examples \ref{ex:m=1} and \ref{ex:m=-2}, 
we observed four Lissajous 3-braids 
$\sW_{1,-2}$, $\sW_{1,4}$, $\sW_{-2,1}$, $\sW_{-2,-5}$
and found
$\sW_{1,-2}=\sA\sB\sB\sA\sB=\sW_{-2,-5}$,
$\sW_{1,4}=\sB\sA\sB\sB\sA=\sW_{-2,1}$.
Since $\sW_{m,n}=\sH_{m,n}\sA\sH_{m,n}^{-1}\sA^{-1}$
(Lemma \ref{SymmetryH_mn} (i), (iii)),
these identities are consequences of
$\sH_{-2,-5}=\sA\sB\sB\sA=\sH_{1,-2}$, 
$\sH_{1,4}=\sB=\sH_{-2,1}$,
which are special cases 
of the identity $\sH_{m,n}=\sH_{2m-n+6mj,m}$
of Lemma \ref{ReductionH0} for $j=0$.
They give a same Lissajous class, namely, 
$C\{1,-2\}=C\{1,4\}=C\{-2,1\}=C\{-2,-5\}$.
\end{Example}

\subsection{Primitive Lissajous types $\mathscr{P}_0$}

Let us restrict our attentions to the collection 
$$
\mathscr{P}_0=\left\{(m,n)\in\Z^2
\left|
\begin{matrix}
 \gcd(m,n)=1,m\equiv n\equiv 1\textrm{ mod }3 \\
m\not\equiv n \textrm{ mod }6,
\ mn<0, \ |m|<|n|\le 2|m|
\end{matrix}
\right.
\right\}
$$
introduced in Theorem \ref{maintheorem}.
We shall call each pair $(m,n)\in \mathscr{P}_0$ a {\it primitive Lissajous type}.
Note that, by virtue of Lemma \ref{ReductionH0},
we have 
$$
\mathscr{H}=\mathscr{H}_0=\{ \sH_{m,n},
 \sH_{n,m}
 \mid (m,n)\in\mathscr{P}_0\}.
$$
Pick any $(m,n)\in \mathscr{P}_0$ and set $\ell:=(m-n)/3$. Note then that
$m\ell>0$, $\ell:$ odd and $\frac23 |m|< |\ell|\le |m|$.
We first look closely at the bi-infinite sequence extending
periodically 
$\eps'\la m,\ell \ra$ of Notation \ref{HalfEpsSeq}, i.e.,
$$
\eps'\la m,\ell \ra=
(\eps_k^{\prime[m,\ell]})_{k\in\Z}
\in
\prod^{\infty}_{-\infty}\{0,1\}
$$
whose entries are given by
\begin{equation}
\label{eps'-mod2}
\eps_k^{\prime[m,\ell]}\equiv\left\lfloor
\frac{|\ell|}{|m|}k-\frac{|\ell|}{2|m|}
\right\rfloor
\mod 2
\qquad (k\in \Z).
\end{equation}
It has period $2|m|$, i.e., 
$\eps_k^{\prime[m,\ell]}=\eps_{k+2m}^{\prime[m,\ell]}$
for all $k\in\Z$. Moreover the 
double palindromicity (Proposition \ref{doublePal}) 
reads in terms of this sequence as:
$$
\eps_k^{\prime[m,\ell]}=\eps_{|m|+1-k}^{\prime[m,\ell]}, \ 
\eps_{|m|+k}^{\prime[m,\ell]}=\eps_{2|m|+1-k}^{\prime[m,\ell]}, \ 
\eps_{k}^{\prime[m,\ell]}+\eps_{k+|m|}^{\prime[m,\ell]}=1
\quad (1\le k\le|m|). 
$$
\begin{Definition}
\label{DeltaEpsDef}
The {\it (mod 2)-difference sequence} of 
$\eps'\la m,\ell \ra\in \prod^{\infty}_{-\infty}\{0,1\}$ is the sequence
$$
\Delta\eps'\la m,\ell \ra:=
\left(\delta_k^{\prime[m,\ell]} \mod 2
\right)_{k\in\Z}\in\prod^{\infty}_{-\infty} (\Z/2\Z),
$$
where each entry is endowed as the integer
\begin{equation*}
\delta_k^{\prime[m,\ell]}:=
\left\lfloor \frac{|\ell|}{|m|} (k+\frac12)\right\rfloor -
\left\lfloor \frac{|\ell|}{|m|} (k-\frac12) \right\rfloor
\qquad (k\in\Z).
\end{equation*}
Observe from (\ref{eps'-mod2}) that
$\delta_k^{\prime[m,\ell]}$ is congruent to
the difference $\eps_{k+1}^{\prime[m,\ell]}
-\eps_{k}^{\prime[m,\ell]}$ mod $2$
for all $k\in\Z$.
\end{Definition}

We now state the following lemma.
\begin{Lemma}
\label{diff_seq}
Notations being as above, the following assertions hold:
\begin{roster}
\item[(i)] The integer term $\delta_k^{\prime[m,\ell]}$ is indeed $0$ or $1$.
\item[(ii)] The difference sequence $\Delta\eps'\la m,\ell \ra$
has period $|m|$, i.e., 
$\delta_k^{\prime[m,\ell]}=\delta_{k+m}^{\prime[m,\ell]}$ for $k\in\Z$.
\item[(iii)] The symbol $0$ is isolated in  $\Delta\eps'\la m,\ell \ra$:
if $\delta_k^{\prime[m,\ell]}=0$ for some $k$, then 
$\delta_{k\pm 1}^{\prime[m,\ell]}=1$.
\item[(iv)] Let $m\ne 1$. Then there is a unique integer $\ee>0$
such that the set of lengths of clusters formed by consecutive 
terms $1$ (called `the 1-cluster lengths') in $\Delta\eps'\la m,\ell \ra$ is
of the form $\{\ee\}$ or $\{\ee,\ee+1\}$, where
$\ee=\left\lfloor \frac{|\ell|}{|m|-|\ell|} \right\rfloor\in\N$.
\end{roster}
\end{Lemma}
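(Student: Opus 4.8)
The whole lemma is a statement about the Beatty-type sequence $\delta_k := \lfloor \alpha(k+\tfrac12)\rfloor - \lfloor\alpha(k-\tfrac12)\rfloor$ with the rational slope $\alpha := |\ell|/|m|$, where by hypothesis $\tfrac23 < \alpha \le 1$ (and $\alpha<1$ once $m\ne 1$, since $\gcd(m,\ell)=1$). I would set $\alpha = |\ell|/|m|$ once and for all and phrase everything in terms of the difference of a floor function of an arithmetic progression with common difference $\alpha$. The key elementary fact I will lean on is: for any real $x$, $\lfloor x+\alpha\rfloor - \lfloor x\rfloor \in \{\lfloor\alpha\rfloor, \lceil\alpha\rceil\}$, so when $0<\alpha\le 1$ the difference is $0$ or $1$; applying this with $x = \alpha(k-\tfrac12)$ gives (i) immediately.

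\textbf{Parts (ii) and (iii).} For (ii), since $\alpha |m| = |\ell|\in\Z$, replacing $k$ by $k+|m|$ shifts the argument of each floor by an integer, so $\delta_{k+m}^{\prime[m,\ell]} = \delta_k^{\prime[m,\ell]}$; this is a one-line check. For (iii), I would argue contrapositively: $\delta_k = 0$ means no integer lies in the half-open interval $\big(\alpha(k-\tfrac12),\,\alpha(k+\tfrac12)\big]$, i.e.\ the fractional part of $\alpha(k-\tfrac12)$ is large, roughly in $(1-\alpha, 1)$. Then the next window $\big(\alpha(k+\tfrac12),\alpha(k+\tfrac32)\big]$ has left endpoint with fractional part $> 1-\alpha + \alpha - 1$... more carefully: if two consecutive windows of length $\alpha$ both miss $\Z$, their union is a half-open interval of length $2\alpha > 4/3 > 1$ that contains no integer, which is absurd. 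The same comparison with the previous window handles $\delta_{k-1}$. So the symbol $0$ cannot be adjacent to another $0$; equivalently, the sum of the widths of any two consecutive windows exceeds $1$ because $\alpha > \tfrac23 > \tfrac12$. I expect this to be the cleanest part.

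\textbf{Part (iv), the main step.} Here I would use the three-distance (Steinhaus) theorem or, more concretely, the standard description of Beatty/Sturmian difference sequences: since $\Delta\eps'\la m,\ell\ra$ is the difference sequence of $\lfloor\alpha k\rfloor$-type data with $\tfrac23<\alpha<1$, it is (up to reindexing) a rotation of the characteristic Sturmian word of slope $\alpha$, and the block of $1$'s between consecutive $0$'s has length either $\lfloor 1/(1-\alpha)\rfloor - 1$ or $\lfloor 1/(1-\alpha)\rfloor$, by a direct count of how many consecutive windows can fall between two integer-hitting windows. Concretely: starting just after a window that contains an integer, the number of subsequent consecutive windows each containing exactly one integer before a window that contains none is governed by how many times one can add $\alpha$ and still land in the "covered" regime; counting the gap of the complementary sequence (the positions of $0$'s, which form a Sturmian sequence of the complementary slope $1-\alpha$) shows the $1$-cluster length is $\big\lceil \tfrac{1}{1-\alpha}\big\rceil - 1$ or $\big\lfloor \tfrac{1}{1-\alpha}\big\rfloor$, and a short computation converts $\tfrac{1}{1-\alpha} = \tfrac{|m|}{|m|-|\ell|}$ into $\ee = \big\lfloor \tfrac{|\ell|}{|m|-|\ell|}\big\rfloor = \big\lfloor\tfrac{1}{1-\alpha}\big\rfloor - 1$ (using $\tfrac{|\ell|}{|m|-|\ell|} = \tfrac{1}{1-\alpha} - 1$ and that $\tfrac{1}{1-\alpha}\notin\Z$ since $\gcd(|m|-|\ell|,|m|)=1$ and $|m|-|\ell|\ge 1$, in fact $\ge 1$ with $|m|\ge 2$). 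The uniqueness of $\ee$ and the dichotomy "$\{\ee\}$ or $\{\ee,\ee+1\}$" then follow from the fact that a Sturmian word takes at most two consecutive values for its block lengths; equivalently, by periodicity with period $|m|$ one can just compute the multiset of $1$-cluster lengths in one period and invoke the three-distance theorem to see it is $\{\ee\}$ (when $\alpha$ is "balanced", e.g.\ $(|m|-|\ell|)\mid$ something) or $\{\ee,\ee+1\}$ in general.

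\textbf{Expected main obstacle.} The routine parts are (i)--(iii); the real work is in (iv), specifically pinning down the exact value $\ee = \lfloor |\ell|/(|m|-|\ell|)\rfloor$ rather than merely "$\lfloor 1/(1-\alpha)\rfloor$ up to $\pm 1$", and proving the clean dichotomy $\{\ee\}$ vs.\ $\{\ee,\ee+1\}$ with no third value appearing. I anticipate the cleanest route is to \emph{not} invoke Sturmian-word black boxes but instead to directly analyze, for a maximal run $\delta_{k+1}=\cdots=\delta_{k+r}=1$ bounded by $\delta_k = \delta_{k+r+1} = 0$, the identity $\sum_{j=1}^{r}\delta_{k+j} = \lfloor\alpha(k+r+\tfrac12)\rfloor - \lfloor\alpha(k+\tfrac12)\rfloor$, combine it with $\delta_k=\delta_{k+r+1}=0$ to trap the fractional part of $\alpha(k+\tfrac12)$ in an interval of length $1-\alpha$, and read off that $r \in \{\lceil\alpha/(1-\alpha)\rceil, \lceil\alpha/(1-\alpha)\rceil + 1\}$ or similar, then simplify using $\tfrac23 < \alpha < 1$. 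Controlling the endpoint (half-integer offsets $\pm\tfrac12$) and the non-integrality conditions coming from $\gcd(m,\ell)=1$ is where the bookkeeping is delicate, but it is bounded and mechanical once the framework is set up.
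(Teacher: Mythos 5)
Your plan is correct and follows essentially the same route as the paper: parts (i)--(iii) are the same elementary window arguments (the paper phrases (iii) as the impossibility of blocks $000$ or $111$ in $\eps'\la m,\ell\ra$ under $\tfrac23<|\ell/m|\le 1$, which is exactly your two-consecutive-empty-windows contradiction), and for (iv) the paper likewise invokes the standard theory of mechanical/Sturmian words, identifying $\Delta\eps'\la m,\ell\ra$ with a periodic repetition of the Christoffel word of slope $\beta=|\ell|/(|m|-|\ell|)$ and reading the $1$-cluster lengths off the derived sequence $\lfloor\beta(k+\tfrac12)\rfloor-\lfloor\beta(k-\tfrac12)\rfloor$, giving $\ee=\lfloor\beta\rfloor$. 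One harmless slip: $\tfrac{1}{1-\alpha}=|m|/(|m|-|\ell|)$ \emph{can} be an integer (precisely when $|m|-|\ell|=1$, e.g.\ $(m,n)=(4,-5)$), but that case simply yields the $\{\ee\}$ branch of the dichotomy, so your conclusion stands.
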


Before going to the proof of Lemma \ref{diff_seq}, let us recall
some basic notions of Christoffel words.

\medskip \noindent
{\bf Quick review on Christoffel words} (cf.\,\cite{Reu19}):
A word in symbols $0,1$ is an element of the free 
monoid $\{0,1\}^\ast$ generated by 0 and 1.
We use the notation $|w|$ for $w\in\{0,1\}^\ast$ 
to denote the length of $w$, that is, the sum of the numbers of 
0's and of 1's in $w$, where each summand is also written as
$|w|_0$, $|w|_1$ respectively.   
For coprime integers $p>0,q\ge 0$, the (lower)
Christoffel word $\chrff_{q/p}\in\{0,1\}^\ast$ of slope $q/p$
is a word in two symbols
(here 0 and 1) of length $p+q$ such that the $k$-th symbol
is $\lfloor \frac{q}{p+q} k \rfloor -\lfloor \frac{q}{p+q}(k-1) \rfloor$
for $k=1,\dots,p+q$. 
The number of 0's (resp. of 1's)
in $\chrff_{q/p}$ is $p$ (resp. $q$). 
For example, $\chrff_{0/1}=0$, $\chrff_{4/7}=00100100101$.
In the subsequent discussions we sometimes regard a word 
$w\in\{0,1\}^\ast$ as a sequence in $\{0,1\}^{|w|}$, e.g.,
$\chrff_{4/7}=(0,0,1,0,0,1,0,0,1,0,1)$.

\begin{proof}[Proof of Lemma \ref{diff_seq}]
(i) follows from the assumption $|\ell/m|\le 1$, and (ii)
follows from the above remark on the double palindromicity.
(iii) follows from a remark that the assumption 
$\frac23<|\ell/m|\le 1$ 
forces the sequence $\eps'\la m,\ell \ra$ to have no consecutive
terms of forms $000$ nor $111$.
(iv) If $m\ne 1$ then $|\ell|<|m|$ as $\gcd(m,\ell)=1$.
In this case, the sequence $\delta_k^{\prime[m,\ell]}$ is what is called 
a mechanical word of rational slope $|\ell/m|<1$ (cf. \cite{BM18}).
It is known to be a purely periodic sequence repeating
the lower Christoffel word of slope $\frac{|\ell|}{|m|-|\ell|}$ 
from some point. Then the sequence has both symbols $0$ and $1$
so that each 1-cluster has finite length. 
Moreover, such a sequence is known to be ``balanced'' in the sense that
the numbers of 0's in any two subwords $w$, $w'$ of
same finite length $|w|=|w'|$ can differ at most 1, i.e.,
$|w|_0-|w'|_0\in\{0,\pm 1\}$.
In fact, these 1-cluster lengths are given by
\begin{equation}
\label{1clustersize} 
\Bigl\{\left\lfloor \frac{|\ell|}{|m|-|\ell|} (k+\frac12)\right\rfloor -
\left\lfloor \frac{|\ell|}{|m|-|\ell|} (k-\frac12) \right\rfloor \Bigr\}_{k\in\Z}.
\end{equation}
This determines $\ee$ and concludes the assertion. 
\end{proof}

\subsection{Level of Lissajous 3-braid}

\begin{Definition}[Level lifting morphism]
\label{level-lift}
Let $\{0,1\}^\ast$ be the free monoid generated by the two symbols $0,1$.
For a positive integer $N$, define the endomorphism
$\phi_N:   \{0,1\}^\ast\to \{0,1\}^\ast$ by
$$
\phi_N:
\begin{cases}
0 & \mapsto (101)^{N-1}\cdot1 , \\
1 & \mapsto (101)^{N}\cdot 1.
\end{cases}
$$
\end{Definition}

\begin{Remark} 
\label{Sturmian_phiN}
The endomorphism $\phi_N$ is a Sturmian morphism 
in the sense of \cite{BS02}, \cite{Reu19}. 
In fact, it is not difficult to express $\phi_N$ as the composition
$E\tilde{G}GEG^{N-1}E\tilde{G}$, where
$E,G,\tilde{G}$ are basic Sturmian morphisms introduced in
\cite[\S 2.3]{BS02}.
\end{Remark}

Two bi-infinite sequences $(\delta_k), (\delta'_k)\in
\prod^{\infty}_{-\infty}\{0,1\}$
are called {\it shift-equivalent} if they are equal sequences
regardless of suffixes, in other words, there exists $r\in\Z$ 
such that $\delta_{k+r}=\delta_k'$ for all $k \in\Z$.
Let $\prod^{\infty}_{-\infty}\{0,1\}/ \sim$ denote 
the set of shift-equivalence classes of bi-infinite sequences.
For a finite word $w\in\{0,1\}^\ast$, 
we write $[{}^\infty w^\infty]$ for the shift-equivalence
class defined by repeating $w$ infinitely many times
to the left and right directions.

\begin{Proposition} \label{finding_ch}
Given $(m,n)\in \mathscr{P}_0$ with $\ell:=(m-n)/3$, there exists
a unique pair $(N,\xi)\in \N \times \Q_{\ge 0}$
such that
$\xi=\frac{q}{p}$ $(\gcd(p,q)=\mathrm{gcd}(p+q,6)=1)$ 
and 
$[{}^\infty\phi_N(\chrff_{q/p})^\infty]$ is the shift-equivalence class of 
$\Delta\eps'\la m,\ell \ra$.
The integer $N\ge 1$ is determined by
$\frac{2N+1}{3N+1}<\frac{|\ell|}{|m|}\le \frac{2N-1}{3N-2}$. 
\end{Proposition}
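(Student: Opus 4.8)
Here is my plan for proving Proposition \ref{finding_ch}.

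\medskip

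The plan is to analyze the difference sequence $\Delta\eps'\la m,\ell\ra$ purely combinatorially and show that it is obtained from a Christoffel word by applying the level-lifting morphism $\phi_N$ a single time. By Lemma \ref{diff_seq}(iv), $\Delta\eps'\la m,\ell\ra$ is (the bi-infinite periodization of) the lower Christoffel word $\chrff_{|\ell|/(|m|-|\ell|)}$, in which the symbol $0$ is isolated and the $1$-cluster lengths form the set $\{\ee\}$ or $\{\ee,\ee+1\}$ with $\ee=\lfloor\frac{|\ell|}{|m|-|\ell|}\rfloor$. Since $(m,n)\in\mathscr{P}_0$ forces $\frac23|m|<|\ell|\le|m|$, we have $\ee\ge 2$, so every maximal run of $1$'s has length at least $2$; equivalently, after deleting the isolated $0$'s the word looks like a concatenation of blocks $(11)^{?}1$, which is exactly the shape of the image of $\phi_N$. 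First I would record the precise arithmetic: if $\frac{2N+1}{3N+1}<\frac{|\ell|}{|m|}\le\frac{2N-1}{3N-2}$ then a short computation with the fractional part gives $\ee=\lfloor\frac{|\ell|}{|m|-|\ell|}\rfloor\in\{2N-1,2N\}$, i.e. the $1$-cluster lengths are among $\{2N-1,2N,2N+1\}$, which is exactly the range produced by $\phi_N$ acting on a word in $\{0,1\}$ (the letter $0$ contributes a block $(101)^{N-1}1$, whose $1$-content between separating $0$'s has lengths drawn from $\{2N-1,\dots\}$, and $1$ contributes $(101)^{N}1$). This pins down $N$ uniquely from $(m,n)$.

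\medskip

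Next I would establish the factorization through $\phi_N$. The morphism $\phi_N$ sends $0\mapsto (101)^{N-1}1$ and $1\mapsto (101)^{N}1$; reading its image as a $01$-word, the positions of $0$'s are all isolated, between consecutive $0$'s sit runs of $1$'s of length either $2N-1$ (coming from the interior $\cdots 1\,1\,01\,1\,0\cdots$ pattern inside one $\phi_N$-block or across a junction) or $2N+1$ (at the junctions adjacent to a $\phi_N(1)$), and crucially the \emph{desubstitution} is unambiguous because $\phi_N$ is a Sturmian morphism (Remark \ref{Strumian_phiN}): a Sturmian morphism maps Sturmian/mechanical words to Sturmian words and the preimage, when it exists, is unique up to a shift. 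Concretely, I would argue that reading $\Delta\eps'\la m,\ell\ra$ and marking each occurrence of the pattern separating $\phi_N(1)$ from $\phi_N(0)$ recovers a well-defined bi-infinite $01$-sequence $u$; since $\Delta\eps'\la m,\ell\ra$ is periodic and balanced (Christoffel), $u$ is also periodic and balanced, hence $[{}^\infty u^\infty]=[{}^\infty\chrff_{q/p}^\infty]$ for a unique Christoffel word $\chrff_{q/p}$ with $\gcd(p,q)=1$, $p>0$, $q\ge 0$ (the reduced slope $\xi=q/p$ being determined by counting: $q$ = number of $1$'s per period of $u$, $p$ = number of $0$'s). The coprimality is automatic; the condition $\gcd(p+q,6)=1$ is where I expect the real content to lie (see below).

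\medskip

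The main obstacle will be verifying the arithmetic constraint $\gcd(p+q,6)=1$ and, more fundamentally, that the slope $\xi=q/p$ together with $N$ is genuinely \emph{well-defined and determined by $(m,n)$} — i.e. that the desubstitution never produces a "defective" block that would force a non-Christoffel $u$. The condition $\gcd(p+q,6)=1$ should come from unwinding the hypotheses $\gcd(m,n)=1$, $m\equiv n\equiv 1\bmod 3$, $m\not\equiv n\bmod 6$ through the identities relating $|m|,|\ell|$ to $p,q$: tracking lengths, one period of $\Delta\eps'\la m,\ell\ra$ has length $|m|$ and is $\phi_N$ of a period of $u$ of length $p+q$, while $|\phi_N(0)|=3N-2$ and $|\phi_N(1)|=3N+1$, giving $|m| = (3N-2)p + (3N+1)q$ and (from the $1$-count) $|\ell| = (2N-1)p+(2N+1)q$ — or the analogous relations up to the exact normalization of the half-period; from these linear relations modulo $2$ and modulo $3$ one reads off that $p+q$ is coprime to $6$ precisely because $m$ is odd (giving $2\nmid p+q$) and $3\nmid\ell$ together with $m\equiv 1\bmod 3$ (giving $3\nmid p+q$). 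I would carry this out by solving the $2\times 2$ integer system for $(p,q)$ in terms of $(|m|,|\ell|,N)$, checking the solution is a genuine non-negative coprime pair, and then reducing mod $2$ and mod $3$. Uniqueness of $(N,\xi)$ then follows since $N$ is forced by the $1$-cluster lengths and $\xi$ is forced by $N$ together with $\Delta\eps'\la m,\ell\ra$ via the (unique) Sturmian desubstitution; I would close by double-checking consistency with the worked example $(m,n)=(4,-5)$ of Figure \ref{Ex4-5fig}, where $\sH_{4,-5}=\dd\bb\dd$ should correspond to $N=1$, $\xi=0$, i.e. $\chrff_{0/1}=0$ and $\phi_1(0)=1$.
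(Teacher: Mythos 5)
Your overall skeleton --- read the level off the combinatorics of $\Delta\eps'\la m,\ell\ra$, desubstitute through $\phi_N$ to a periodic balanced word, then solve the linear system $|m|=(3N-2)p+(3N+1)q$, $|\ell|=(2N-1)p+(2N+1)q$ and reduce modulo $2$ and $3$ to get $\gcd(p+q,6)=1$ --- is the same as the paper's, and the arithmetic half is correct. But the combinatorial step that is supposed to make the desubstitution work rests on a false description of the image of $\phi_N$. You claim that $\ee=\lfloor|\ell|/(|m|-|\ell|)\rfloor\in\{2N-1,2N\}$ and that the $1$-cluster lengths of $\Delta\eps'\la m,\ell\ra$ lie in $\{2N-1,2N,2N+1\}$. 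Neither holds: setting $x=|\ell|/|m|$, the inequality $\frac{2N+1}{3N+1}<x\le\frac{2N-1}{3N-2}$ translates into $\frac{x}{1-x}\in\bigl(2+\frac{1}{N},\,2+\frac{1}{N-1}\bigr]$, so $\ee\ge 3$ and unbounded when $N=1$, while $\ee\in\{2,3\}$ for every $N\ge 2$. Correspondingly, in ${}^\infty\phi_N(w)^\infty$ the runs of $1$'s between consecutive $0$'s have length $2$ (inside a block $(101)^{r-1}1$) or $3$ (across a block junction) for all $N\ge 2$, not $2N\pm1$; the numbers $2N\pm1$ are the lengths of the frieze clusters $(\pp\qq)^{r-1}\pp$, which you have conflated with $1$-run lengths. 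Hence the $1$-cluster lengths cannot distinguish the levels $N\ge2$ from one another, and your mechanism for ``pinning down $N$'' and for marking the $\phi_N$-block boundaries collapses exactly where the real work is.

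The step you are missing is the one the paper supplies: if $\ee\ge 3$ then $N=1$ and one groups each isolated $0$ with neighbouring $1$'s into $A=1011=\phi_1(1)$, filling the gaps with $B=1=\phi_1(0)$; if $\ee=2$ one splits each length-$3$ cluster of $1$'s as $11\,|\,1$, observes that each segment between bars has the form $(101)^s1$, and invokes the balanced property of the Sturmian sequence to rule out the coexistence of $(101)^s1$ and $(101)^t1$ with $t\ge s+2$ --- this is what forces the exponents into two consecutive values $N-1,N$ and is where $N$ actually comes from when $N\ge 2$. Your final sanity check would have exposed the problem: for $(m,n)=(4,-5)$ one has $|\ell|/|m|=3/4$, which falls in the $N=2$ range (the $N=1$ inequality is strict at $3/4$); one period of $\Delta\eps'\la 4,3\ra$ is a conjugate of $1011=\phi_2(0)$, and $\sH_{4,-5}=\dd\bb\dd=(\dd\bb)^{2-1}\dd$ is a cluster of length $3=2N-1$ with $N=2$ --- not $N=1$, $\phi_1(0)=1$ as you assert.
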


\begin{proof}
Let $(m,n)\in \mathscr{P}_0$.
If $m=1$, then $n=-2$ and $\ell=1$, thus
$\eps'\la m,\ell\ra\sim (\dots 0101\dots) $ 
so that $\Delta\eps'\la m,\ell \ra$ is constant 
$(\dots111\dots)$ whose shift-equivalence class is
$[{}^\infty\phi_1(\chrff_{0/1})^\infty]$.
Below we assume $m\ne 1$. 
As remarked in the proof of Lemma \ref{diff_seq} (iv),
every length $|m|$-period of $\Delta\eps'\la m,\ell \ra$ is conjugate
to the Christoffel word of slope $\frac{|\ell|}{|m|-|\ell|}$, hence 
consists of $\ell$ 1's and $(|m|-|\ell|)$ 0's.
Since $\ell$ is odd, the (mod 2)-indefinite summation 
$\eps''_k:=\sum_{j=k_0}^k\delta_k^{\prime[m,\ell]}$
(starting from any chosen $k_0$)
gives a period $2|m|$-sequence $(\eps''_k)_{k\in\Z}$
which is shift-equivalent to the original $\eps'\la m,\ell\ra$.
Let $\ee>0$ be the integer given by Lemma \ref{diff_seq} (iv).
Since $\frac23<\frac{|\ell|}{|m|}<1$ for $(m,n)\in\mathscr{P}_0\setminus\{(1,-2)\}$, we have
$\ee=\left\lfloor \frac{|\ell|}{|m|-|\ell|} \right\rfloor\ge 2$.

\begin{Claim}
\label{e-argument}
There is a finite word $w\in\{0,1\}^\ast$ such that
$[{}^\infty\phi_N(w)^\infty]$
is the shift-equivalence class of 
$\Delta\eps'\la m,\ell \ra$.
\end{Claim}

\begin{proof}[Proof of Claim]
Suppose (for simplicity) first $\ee\ge 3$. Then, there are at least
three consecutive 1's between any two nearest $0$'s 
in $\Delta\eps'\la m,\ell \ra$, which enables us to form
a neighbor block of the form $A=(1011)=\phi_1(1)$ 
for each 0 so that
distinct blocks share no common symbols. Burying the gaps
between those $A$-type blocks by the trivial block $B=(1)=\phi_1(0)$,
we obtain a purely periodic bi-infinite sequence of $A$ and $B$. 
This means that there is a finite word $w\in\{0,1\}^\ast$ such that
the shift-equivalence class of $\Delta\eps'\la m,\ell \ra$
is $[{}^\infty \phi_1(w){}^\infty]$.
Suppose next $\ee=\left\lfloor \frac{|\ell|}{|m|-|\ell|} \right\rfloor= 2$.
Then, 
the set of 1-cluster lengths could be either $\{2\}$ or $\{2,3\}$.
But the former case never happens:
In fact, since $\frac{|\ell|}{|m|}>\frac23$, it follows that
$\frac{|\ell|}{|m|-|\ell|}$ is strictly bigger than $2$ (and less than $3$ by assumption);
hence, in view of (\ref{1clustersize}), the 1-cluster lengths 
must involve both $2$ and $3$.
Thus, 
$\Delta\eps'\la m,\ell \ra$
is of the form where two types
of 1-clusters $(11)$ and $(111)$ are separated by isolated 0's.
Separating the latter type $(111)$ to $(11|\cdot|1)$, we easily
regard each block sandwiched by $|$ as a form $(101)^s1$ for some
$s>0$.
We here remark that, in $\Delta\eps'\la m,\ell \ra$,  
if a block $(101)^s1$ appears, then $(101)^t1$ with $t\ge s+2$
cannot appear.
(In fact, any block $(101)^s$ has its preceding and subsequent
letters $11$ so as to form a word of length $3s+4$ 
with $s$ number of $0$'s, 
while the word omitting both ends of $(101)^{s+2}$ is of
length $3s+4$ with $s+2$ number of $0$'s.
This contradicts the balanced property of the Sturmian sequence
$\Delta\eps'\la m,\ell \ra$.)
{}From the remark, it follows that $\Delta\eps'\la m,\ell \ra$ 
is a sequence of two type of blocks 
$A=(101)^{N-1}\cdot 1 =\phi_{N}(0)$ 
and $B=(101)^{N}\cdot 1=\phi_N(1)$,
in other words, is in the shift-equivalence class
$[{}^\infty \phi_N(w){}^\infty]$ for a finite word $w\in\{0,1\}^\ast$.
This proves the above claim.
\end{proof}

For the proof of Proposition \ref{finding_ch}, it suffices to show
the word $w$ mentioned in the above claim is conjugate to 
some Christoffel word $\chrff_{q/p}$.
We first recall that the assumption 
$(m,n)\in\mathscr{P}_0\setminus\{(1,-2)\}$ implies
$\frac23< \ell/m<1$, hence the Christoffel word
of slope $\frac{|\ell|}{|m|-|\ell|}$ may be written simply 
as $\chrff_{|\ell/(m-\ell)|}$.
As one period of length $|m|$ of $\Delta\eps'\la m,\ell \ra$
is conjugate to $\chrff_{|\ell/(m-\ell)|}$, it  consists
of $|\ell|$ number of 1's and $|m|-|\ell|$ number of 0's.
Let $|w|_0=p$ and $|w|_1=q$ and assume $\phi_N(w)$ is
conjugate to it. Then, by simple calculations, we obtain
$|m|-|\ell| = p(N-1)+qN$, $|\ell| = p(2N-1)+q(2N+1)$, hence
\begin{equation}
\label{ml-vs-pq}
\begin{cases}
|m| = p(3N-2)+q(3N+1) , \\
|\ell| = p(2N-1)+q(2N+1);
\end{cases}
\textrm{and }
\begin{cases}
p = (3N+1)|\ell|-(2N+1)|m| , \\
q = -(3N-2)|\ell|+(2N-1)|m|.
\end{cases}
\end{equation}
{}From these identities we see that
$\gcd(m,n)=1 \Leftrightarrow \gcd(p,q)=1$, that 
$3\nmid |m|\Leftrightarrow 3\nmid (p+q)$,
and that $\ell\equiv 1$ mod $2$
$\Leftrightarrow$ $p+q\equiv 1$ mod $2$.
Moreover, from the latter set of identities in 
(\ref{ml-vs-pq}), it is easy to see that 
the condition $p>0,q\ge 0$ is equivalent to
\begin{equation}
\label{mn-levelN}
\frac{2N+1}{3N+1}<\frac{|\ell|}{|m|}\le \frac{2N-1}{3N-2}
\ \left( \Leftrightarrow 
\frac{3N+2}{3N+1}<-\frac{n}{m}\le \frac{3N-1}{3N-2}
\right).
\end{equation}
As the real interval 
$(\frac23,1]$ decomposes into the disjoint sum 
$\bigcup_{N=1}^\infty (\frac{2N+1}{3N+1}, \frac{2N-1}{3N-2}]$,
the above estimate determines $N\in\N$ and 
$(p,q)\in \N\times \Z_{\ge 0}$
directly from $(m,n)\in\mathscr{P}_0$, including the case 
$(m,n)=(1,-2)$ where $N=1$ and $(p,q)=(1,0)$ are endowed.
This leads us to the following definition.

\begin{Definition}[Level and Slope]  
\label{level-slope}
Define a monotonically decreasing sequence 
$\left(u_N:=\frac{3N-1}{3N-2}\right)_{N\ge 1}$ 
so that the interval $(1,2]$ is decomposed as 
$(1,2]=\bigcup_{N\ge 1} (u_{N+1},u_N]$.
For $(m,n)\in\mathscr{P}_0$, we call $N\in\N$ with $-\frac{n}{m}
=\left|\frac{n}{m}\right|\in  (u_{N+1},u_N]$
the {\it level} associated to the
primitive Lissajous type $(m,n)\in\mathscr{P}_0$
(for which $mn<0$ 
and $|m| < |n| \le 2|m|$ by definition).
Let $p>0,q\ge 0$ be integers satisfying (\ref{ml-vs-pq}) with 
$\ell:=(m-n)/3$ and the level $N$.
We call the nonnegative rational $\xi=\frac{q}{p}$ the {\it (Christoffel) slope}
associated to  $(m,n)\in\mathscr{P}_0$.
\end{Definition}

It remains to show that the image of the Christoffel word $\chrff_{q/p}$ 
by $\phi_N$ is conjugate to one period, say,
$\delta:=(\delta_k^{\prime[m,\ell]})_{k=1}^{|m|}$ of 
$\Delta\eps'\la m,\ell \ra$.
By Definition \ref{DeltaEpsDef} and Lemma \ref{diff_seq} (i), 
the sequence $\Delta\eps'\la m,\ell \ra$ consists of the components
\begin{equation*}
\delta_k^{\prime[m,\ell]}=
\left\lfloor \frac{|\ell|}{|m|} (k+\frac12)\right\rfloor -
\left\lfloor \frac{|\ell|}{|m|} (k-\frac12) \right\rfloor
\qquad (k\in\Z)
\end{equation*}
so as to be the cutting sequence of the straight line 
$y=\left(\frac{|\ell|}{|m|-|\ell|}\right)(x-\frac{1}{2})$ on $xy$-plane.
(cf. \cite{BS02}). This is indeed the repetition of a conjugate word
to $\chrff_{|\ell/(m-\ell)|}$ (cf. \cite[\S 15.2]{Reu19}).
Therefore $\delta$ is conjugate to $\chrff_{|\ell/(m-\ell)|}$.
On the other hand, as remarked in Remark \ref{Sturmian_phiN}, 
$\phi_N$ is a composition of basic Sturmian morphisms,
which, according to \cite[Corollary 2.6.2-3]{Reu19},
sends every Christoffel words to a conjugate
to other Christoffel word.
In particular, $\phi_N(\chrff_{q/p})$ is conjugate to 
$\chrff_{|\ell/(m-\ell)|}$. Thus, it is also conjugate to $\delta$
by the above discussion. The proof of Proposition \ref{finding_ch}
is completed.
\end{proof}

Given a Lissajous type $(m,n)\in\mathscr{P}_0$ with $\ell:=(m-n)/3$,
let $(N,\xi)\in \N\times\Q_{\ge 0}$ be the parameter endowed
by Proposition \ref{finding_ch}.
Then, the associated bi-infinite sequence
$\eps'\la m,\ell\ra$ can be recovered block-by-block
mod 2 summation in the following way: 
Suppose $(\delta_k^{\prime[n,\ell]})_{k=k_0,\dots,k_1}$
is of the form of a block $\phi_N(0)$ or $\phi_N(1)$, viz.,
$(101)^{r-1}1$ with $r\in\{N,N+1\}$ of length $k_1-k_0+1=3r-2$.
Then, 
\begin{equation}
\label{recoverEps}
 (\eps_k^{\prime[n,\ell]})_{k=k_0,\dots,k_1}
 =
\begin{cases}
(100)^{r-1}1 &(\eps'_{k_0}=0), \\
(011)^{r-1}0 &(\eps'_{k_0}=1).
\end{cases}
\end{equation}
Extend the sequence $(\eps_k)$ 
of Proposition \ref{mainFormula} 
to a purely periodic bi-infinite sequence
in the obvious way so that
$\eps_k=(-1)^{\eps_k^{\prime[m,\ell]}+1}$
(note that $m\ell>0$ for $(m,n)\in\mathscr{P}_0$). 
The above (\ref{recoverEps}) and 
Table \ref{table1} then interpret 
the corresponding
one block 
$(\eps_k)_{k=k_0,\dots,k_1}$ 
as 
$(\sB\sA\sB^{-1}\sB^{-1}\sA)^{r-1}\sB=(\pp\qq)^{r-1}\pp$ or
its $\sA$-conjugate $(\qq\pp)^{r-1}\qq$ 
if $\eps'_{k_0}=0$,
and 
$(\sB^{-1}\sA\sB\sB\sA)^{r-1}\sB^{-1}=(\bb\dd)^{r-1}\bb$
or its 
$\sA$-conjugate $(\dd\bb)^{r-1}\dd$ 
if $\eps'_{k_0}=1$.
We call a block of the form $(\pp\qq)^{r-1}\pp$ or $(\qq\pp)^{r-1}\qq$
($r\in\N$) a {\it left (odd) cluster}, and call a block of the form 
$(\bb\dd)^{r-1}\bb$ or $(\dd\bb)^{r-1}\dd$ ($r\in\N$) a 
{\it right (odd) cluster}.
Upon going to the next block 
$\eps_{k_1+1},\dots,\eps_{k_2}$,
we note that the sign of $\eps_{k_1+1}$ alters from
$\eps_{k_1}$ as $\delta_{k_1}=1$.
Thus, the bi-infinite repetition of $\sW_{m,n}$
for $(m,n)\in \mathscr{P}_0$
as a frieze pattern in $\bb,\dd,\pp,\qq$
consists of those
left and right clusters of odd lengths $2N-1$, $2N+1$,
where left and right ones appear alternatively.

The following proposition summarizes how to 
construct the $\bb\dd\pp\qq$-word $\sH_{m,n}$
from level and slope $(N,\xi)$ 
associated to the primitive Lissajous type 
$(m,n)\in\mathscr{P}_0$.

\begin{Proposition}
\label{LR-cluster}
For $(m,n)\in \mathscr{P}_0$, 
let $(N,\xi=\frac{q}{p})$ be its associated pair of 
the level and the slope. 
Then there is a palindromic sequence 
$(r_1,\dots,r_{2\nu+1})\in\{N,N+1\}^{2\nu+1}$
of odd length 
$2\nu+1(= p+q)$ 
that induces an expression
$\sH_{m,n}=\cc_1\cdots\cc_{2\nu+1}$ 
in the form of a concatenation of 
(left and right) clusters of odd lengths 
$$
\cc_i\in
\bigl\{ (\bb\dd)^{r_i-1}\bb, (\dd\bb)^{r_i-1}\dd \bigr\}
\cup
\bigl\{(\pp\qq)^{r_i-1}\pp, (\qq\pp)^{r_i-1}\qq \bigr\}
\quad (i=1,\dots,2\nu+1)
$$
determined by the following rules:
\begin{roster}
\item[(a)] $(r_1,\dots,r_{2\nu+1})=\varphi_N(\pww_{q/p})$, where 
$\pww_{q/p}$ is the unique palindromic conjugate of 
$\chrff_{q/p}$ (cf. Remark \ref{PalConj}) and
$\varphi_N:\{0,1\}^\ast\to\{N,N+1\}^\ast$ is the morphism of 
words replacing letters by the rule $0\mapsto N$, $1\mapsto N+1$.
 
\item[(b)]
$\cc_1$ coincides with  $(\dd\bb)^{r_1-1}\dd$ if $m>0$
$($resp. with $(\bb\dd)^{r_1-1}\bb$ if $m<0)$.
\item[(c)] 
The left cluster and right cluster appear alternatively,
and at every change of clusters
the letter $\bb$ $($resp. $\dd)$
follows the letter $\qq$ $($resp. $\pp)$ and vice versa.
\end{roster}
\end{Proposition}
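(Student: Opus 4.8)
The plan is to read the $\bb\dd\pp\qq$-word of $\sH_{m,n}$ off the block decomposition of the bi-infinite frieze pattern of $\sW_{m,n}$ already established in the paragraph preceding the proposition, and then to nail down the three pieces of data (a), (b), (c) one at a time. By Proposition \ref{finding_ch} the difference sequence $\Delta\eps'\la m,\ell\ra$ is shift-equivalent to $[{}^\infty\phi_N(\chrff_{q/p})^\infty]$, and by the translation rules \eqref{recoverEps} together with Table \ref{table1} each block $\phi_N(0)$ (resp. $\phi_N(1)$) of $\Delta\eps'$ becomes an odd cluster of frieze-length $2N-1$ (resp. $2N+1$), with left- and right-type clusters alternating. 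Since $\Delta\eps'$ has period $|m|$ (Lemma \ref{diff_seq}(ii)) and one period of the underlying $\{0,1\}$-block word is a single cyclic copy of $\chrff_{q/p}$ of length $p+q$, the portion of the pattern coming from $\eps_1,\dots,\eps_{|m|}$ --- which is exactly $\sH_{m,n}$ --- is a concatenation $\cc_1\cdots\cc_{2\nu+1}$ of $2\nu+1=p+q$ odd clusters whose length profile $(r_1,\dots,r_{2\nu+1})\in\{N,N+1\}^{2\nu+1}$ equals $\varphi_N$ of some cyclic conjugate $w$ of $\chrff_{q/p}$.

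\textbf{Part (a).}
To show $w=\pww_{q/p}$ I would invoke double palindromicity. First, $\sH_{m,n}$ is equal to its own word-reversal: the palindromicity of the sequence $(\eps_1,\dots,\eps_{|m|})$ and the symmetry of the flanking $\sA$-exponents in Proposition \ref{doublePal} give $\sH_{m,n}=\sH_{m,n}^{\mathrm{rev}}$ as elements of $\bar\Gamma^2$, and the reduced word of a self-reverse element of a free product of finite groups is a palindrome. Each of the four cluster shapes $(\bb\dd)^{r-1}\bb$, $(\dd\bb)^{r-1}\dd$, $(\pp\qq)^{r-1}\pp$, $(\qq\pp)^{r-1}\qq$ is itself a palindrome, so palindromicity of $\cc_1\cdots\cc_{2\nu+1}$ forces $\cc_{2\nu+2-i}=\cc_i$, hence $(r_1,\dots,r_{2\nu+1})$ is palindromic; as $\varphi_N$ is a bijection on letters, $w$ is then a palindromic conjugate of $\chrff_{q/p}$, which by Remark \ref{PalConj} is the unique one $\pww_{q/p}$.

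\textbf{Parts (b) and (c).}
For (b) I would pin down the first cluster directly. For $(m,n)\in\mathscr{P}_0$ one has $0\le\frac{|\ell|}{2|m|}<\frac12$ and $1<\frac{3|\ell|}{2|m|}\le\frac32$, so $\eps_1'=0$ and $\eps_2'=1$, i.e. $\eps_1=-1$, $\eps_2=1$ (the type $(1,-2)$, where $\sH_{1,-2}=\dd$, is handled directly). Tracing the diagram \eqref{friezeDiagram} from $F=[\RN{1}^-]$ and using the leading factor $\sA^{(1-\sgn(m)\eps_1)/2}=\sA^{(1+\sgn(m))/2}$ of $\sH_{m,n}$ from Proposition \ref{doublePal}, Table \ref{table1} then yields first letter $\dd$ when $m>0$ (the segment $\sA\sB^{\eps_1}\sA=\sA\sB^2\sA=\dd$) and $\bb$ when $m<0$ (the leading $\sB^{\eps_1}=\sB^2$ with no preceding $\sA$), which combined with (a) is exactly rule (b). Finally, (c) is automatic: a frieze pattern is by definition a reduced $\bb\dd\pp\qq$-word, each cluster already alternates perfectly between the two free factors $\{id,\pp,\bb\}$ and $\{id,\qq,\dd\}$ of $\bar\Gamma^2$, and reducedness at a junction $\cc_i\cc_{i+1}$ forces the two adjacent letters into opposite factors --- which says precisely that $\bb$ neighbours $\qq$ and $\dd$ neighbours $\pp$.

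\textbf{Main obstacle.}
The step I expect to be delicate is the first paragraph: showing that the cut between $\sH_{m,n}$ and the second half $\sA\sH_{m,n}^{-1}\sA^{-1}$ (and likewise the period boundary of the bi-infinite pattern) falls exactly at a block boundary of $\Delta\eps'$, so that $\sH_{m,n}$ is a clean concatenation of full clusters rather than ending mid-cluster. This should follow from uniqueness of the Sturmian desubstitution used in the proof of Proposition \ref{finding_ch} together with double palindromicity, but matching the two palindromic symmetries --- that of the reduced word $\sH_{m,n}$ and that of the period of $\Delta\eps'$ --- needs care because of the half-integer shift $k\pm\frac12$ in the definition of $\delta_k^{\prime[m,\ell]}$.
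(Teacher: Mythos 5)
Your proposal is correct and follows essentially the same route as the paper's own proof: Proposition \ref{finding_ch} together with the block-to-cluster translation of Table \ref{table1} and (\ref{recoverEps}) for the decomposition, double palindromicity of $\sH_{m,n}$ plus the uniqueness of the palindromic conjugate (Remark \ref{PalConj}) for (a), and direct inspection for (b) and (c). The ``main obstacle'' you flag --- that the cut between $\sH_{m,n}$ and $\sA\sH_{m,n}^{-1}\sA^{-1}$ lands on a cluster boundary --- is resolved in the paper by exactly the palindromicity/interchange argument you sketch (the cluster decomposition is intrinsic as maximal runs in $\{\bb,\dd\}$ versus $\{\pp,\qq\}$, and the second half starts with a right letter), and your explicit verifications of (b) and (c) are in fact more detailed than the paper's ``follow without difficulties from a closer look at the construction.''
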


\begin{proof}
As shown in 
Lemma \ref{Hshape0}, $\sH_{m,n}\in\bar\Gamma^2$ has a 
unique expression as a reduced word in $\{\bb,\dd,\pp,\qq\}$.
In fact, it follows from Lemma \ref{diff_seq} (iii) (isolation of 0's in $\Delta\eps'$)
that the lengths of consecutive appearances of 
$\sB$ or $\sB^{-1}$ in $\sW_{m,n}$ are at most two  so that
the $\bb\dd\pp\qq$-word translated from the sequence
$(\eps_k^{\prime [m,\ell]})_{k=1,...,|m|}$ by Table \ref{table1} is already reduced
for $(m,n)\in \mathscr{P}_0$ (and $\ell=(m-n)/3$). 
%
Recall then by Proposition \ref{finding_ch} that 
the bi-infinite repetition of $\sW_{m,n}=\sH_{m,n}\sH_{n,m}$ 
is shift-equivalent to a sequence obtained
from the mod 2 indefinite summation of 
$[{}^\infty\phi_N(\chrff_{q/p})^\infty]$. 
Since both $\sH_{m,n}$ and $\sH_{n,m}=\sA\sH_{m,n}^{-1}\sA^{-1}$
are palindromic words in $\{\bb,\dd,\pp,\qq\}$
of the same odd length (Lemma \ref{Hshape0}),
%
the uniqueness of the palindromic conjugate of 
$\chrff_{|\ell/(m-\ell)|}$ and $\chrff_{q/p}$ (Remark \ref{PalConj})
insures that $\sH_{m,n}$ consists of left and right clusters 
induced from $\phi_N(0), \phi_N(1)$.
Thus, we obtain a palindromic sequence
$(r_1,\dots,r_{2\nu+1})\in\{N,N+1\}^{2\nu+1}$
and the asserted form
of $\sH_{m,n}=\cc_1\cdots\cc_{2\nu+1}$.
The properties (a), (b), (c) follow without difficulties
from a closer look at the construction.
\end{proof}

\begin{Remark}[Palindromic conjugate of Christoffel word]
 \label{PalConj}
Let $\chrff_{q/p}=\chrff_{j/i}\chrff_{l/k}$ be the standard factorization of
Christoffel words in $0,1$ (i.e., $p=i+k$, $q=j+l$, $il-jk=1$ \cite{Reu19}), 
and write
$\chrff_{j/i}=0m1$, $\chrff_{l/k}=0n1$ with
$m$, $n\in\{0,1\}^\ast$ palindromic.
It is then shown in \cite[Proposition 6.1]{BR06} that 
$\chrff_{q/p}=0m10n1=0n01m1$. 
From this, we see that $\chrff_{q/p}$ has a unique palindromic
conjugate $\pww_{q/p}$
iff exactly one of $i+j=|m|+2$ or $k+l=|n|+2$ is odd. 
This is the case when $p+q$ is odd, and then
$\pww_{q/p}$
is given by $\mathrm{Rotate}^{(k+l)/2}(w)$ if $i+j$ is odd,
and by $\mathrm{Rotate}^{k+l+(i+j)/2}(w)$ if $k+l$ is odd,
where $\mathrm{Rotate}$ is the rotating operator on words
such as
$\mathrm{Rotate}(s_0s_1\cdots s_l)=s_1\cdots s_ls_0$.
The uniqueness is known from work due to
de Luca and Mignosi (see \cite[Lemma 12.1.9]{Reu19}
for a simple account using a regular polyhedron). 
\end{Remark}

\begin{Remark}
\label{BocaOCF}
Let $\Delta(t)$ $(t\in\R$) be the 
Lissajous 3-body motion of type $(m,n)\in\mathscr{P}_0$
and let 
$\tilde{\gamma}_{m,n}$ be the standard infinite lift 
introduced in Definition \ref{lift_on_H}
that is the infinite geodesic on $\mathfrak{H}$
whose image on the orbifold $\mathbf{P}^1_{\infty 33}$
is homotopic to the closed curve $\psi^3(\Delta(t))$.
As $\tilde{\gamma}_{m,n}$ passes $\bI\in \mathfrak{H}$,
one easily sees that the pair of two real quadratic endpoints of 
$\tilde{\gamma}_{m,n}$ can be taken from either
$(-1,0)\times (1,\infty)$ or $(0,1)\times (-\infty,-1)$.
We call the endpoint in the latter component
the farther endpoint of $\tilde{\gamma}_{m,n}$.
Let  
$(r_1,\dots,r_{2\nu+1})$ be the sequence defined
in Proposition \ref{LR-cluster}.
Then, it follows from \cite[\S 3.2]{BM18} that 
the odd sequence $(2r_1-1,\dots,2r_{2\nu+1}-1)$
coincides with one period of the sequence of partial quotients 
in the odd continued fraction expansion of 
the farther endpoint of $\tilde{\gamma}_{m,n}$. 
Moreover, this sequence 
can be read off as the cutting sequence of $\gamma$ 
along the checkered Farey tessellation 
(cf.\,\S \ref{sec3.2}, \S \ref{sec.BMgeodesic}).
\end{Remark}

Let us now settle the main theorem presented in Introduction.

\begin{proof}[Proof of Theorem \ref{maintheorem}]
The collision-free condition (i)
for the case $m=m^\ast$, $n=n^\ast$ is shown in 
Proposition \ref{mainFormula}. 
General case is reduced to this case by the discussions
on mirror motions given in paragraphs preceding to
Definition \ref{LissajousClass},
where also shown 
that the conjugacy classes $C\{m,n\}$ is the same as 
$C\{m^\ast,n^\ast\}$.  
Thus, for showing (ii), (iii), (iv) below, we only need to count
those types with $(m,n)=(m^\ast,n^\ast)$.
Each class $C\{m,n\}$ splits into two $\bar\Gamma^2$-conjugacy 
classes (\ref{splitClass}), and one of them is represented by
the special element $\sW_{m,n}$ introduced in 
Proposition \ref{mainFormula} (ii).
The word $\sW_{m,n}$ decomposes in the form 
$\sW_{m,n}=\sH_{m,n}\sA\sH_{m,n}^{-1}\sA$,
where 
$\sH_{m,n}\in \bar\Gamma^2$ is 
introduced in Proposition \ref{doublePal}.
In summary, the class $C\{m,n\}$ is determined
by the word $\sH_{m,n}$ (which indeed represents
a $\frac16$ of the Lissajous motion
$(a(t),b(t),c(t))$ on $(0^\mp,\frac16{}^\mp)$).
By Lemma \ref{ReductionH0}, we then reduce 
the classification of $C\{m,n\}$ to the collection of
words $\{\sH_{m,n}\mid (m,n)\in\mathscr{P}_0\}\subset \bar\Gamma^2$, 
each word of which has a unique reduced expression  
in $\bb,\dd,\pp,\qq$.
The assertion (iii) is already settled as the construction of another 
parametrization of $\mathscr{P}_0$ by level and slope 
presented in Definition \ref{level-slope}. 
The bijectivity of (iv) follows from (\ref{ml-vs-pq}) and the
argument subsequent to it.
It remains to show that the correspondence 
$\mathscr{P}_0\ni (m,n)$
$\mapsto$ $C\{m,n\}$ asserted in (ii) is bijective.
The surjectivity follows from the above reduction process.
To show the injectivity, we recall that
$\sW_{m,n}$ is a representative of the 
$\bar\Gamma^2$-conjugacy class $C^+\{m,n\}$
contained in $C\{m,n\}$.
%
Noting that the class $C^+\{m,n\}$ is formed by those words in
$\{\bb,\dd,\pp,\qq\}$ cyclically equivalent to $\sW_{m,n}$,
let us prove the desired injectivity by showing that 
$(m,n)\in\mathscr{P}_0$ 
can be recovered from the bi-infinite word 
${}^\infty\sW_{m,n}{}^\infty$ 
in $\bb,\dd,\pp,\qq$.
Since $\sW_{m,n}=\sH_{m,n}\cdot\sA\sH_{m,n}^{-1}\sA$ and 
since, as discussed in the proof of Proposition \ref{LR-cluster},
$\sH_{m,n}$ is palindromic as a word in $\bb,\dd,\pp,\qq$,
it follows that 
$\sA\sH_{m,n}^{-1}\sA$ is obtained from $\sH_{m,n}$
by the replacement $\bb\leftrightarrow \qq$, $\pp\leftrightarrow\dd$
(cf.\,Table \ref{table1}).
Therefore, the sequence of 
lengths of clusters in $\sW_{m,n}$
respectively of left letters $\{\bb,\dd\}$ and of right letters $\{\pp,\qq\}$ 
is of the form
$(r_1,\dots,r_{2\nu+1},r_1,\dots,r_{2\nu+1})$
that is double of the sequence $(r_1,\dots,r_{2\nu+1})\in\{N,N+1\}^{2\nu+1}$
considered in Proposition \ref{LR-cluster}.
Accordingly, one can easily find the shift-equivalence class
$[{}^\infty(r_1,\dots,r_{2\nu+1})^\infty]$
by counting the lengths of the left/right clusters 
in the bi-infinite word ${}^\infty\sW_{m,n}{}^\infty$.
Then, the set $R:=\{r_i\}_{i=1}^{2\nu+1}$ is
of the form $\{N\}$ or $\{N,N+1\}$, from which
the level $N$ is uniquely determined.
Moreover, as
$(r_1,\dots,r_{2\nu+1})$ is of the form $\varphi_N(\pww_\xi)$
for some $\xi=q/p\in\Q_{\ge 0}$,
by virtue of the primitivity of Christoffel words,
one can recover the length $2\nu+1$ by picking any minimal
segment in $[{}^\infty(r_1,\dots,r_{2\nu+1})^\infty]$ 
whose repetitions generate the word.
The slope $\xi=q/p$ is then determined by taking 
$p=\#\{j\mid r_j=N\}$,
$q=\#\{j\mid r_j=N+1\}$.
By the bijection (iv) shown above, the pair $(N,\xi)$ 
determines $(m,n)\in\mathscr{P}_0$.
Thus, the proof of Theorem \ref{maintheorem} is completed.
\end{proof}

Before going to show examples, 
in Table \ref{table2} below, we shall summarize
expressions of the  
left and right clusters in $\bb\dd\pp\qq$-words
which form basic blocks of the frieze pattern of
$\sH_{m,n}\in\bar\Gamma^2$ (and of  
$\sW_{m,n}=\sH_{m,n}\sH_{n,m}\in\bar\Gamma^2$)
as shown in Proposition \ref{LR-cluster}.
\begin{table}[H]
\caption[Expressions of left and right clusters in $\{\bb,\dd\}$ or $\{\pp,\qq\}$ 
as matrices in $\PSL_2(\Z)$ and as braid words in
$\bsigma_1,\bsigma_2\in\mathcal{B}_3$ 
(with $\sA=\bsigma_1\bsigma_2\bsigma_1=\bsigma_2\bsigma_1\bsigma_2$).]
{Expressions of left and right clusters in 
$\{\bb,\dd\}$ or $\{\pp,\qq\}$ 
as matrices in $\PSL_2(\Z)$ 
and as braid 
words in
$\bsigma_1,\bsigma_2\in\mathcal{B}_3$ 
(with $\sA=\bsigma_1\bsigma_2\bsigma_1=\bsigma_2\bsigma_1\bsigma_2$).}
\label{table2}  
$$
\begin{array}{|c|c|c|}\hline
r\in\N
& \PSL_2(\Z)
& \mathcal{B}_3{\rule[-1mm]{0pt}{3ex}}
\\ 
\hline\hline
(\bb\dd)^{r-1}\bb 
& \pm\bmatx{0}{1}{-1}{\ 1-2r} {\rule[-3mm]{0pt}{5ex}}
& \bsigma_1^{2r-1}\sA=\sA\,\bsigma_2^{2r-1}
\\ \hline
(\dd\bb)^{r-1}\dd  & 
\pm\bmatx{2r-1\ }{-1}{1}{0} {\rule[-3mm]{0pt}{5ex}} 
& \bsigma_2^{2r-1}\sA=\sA\,\bsigma_1^{2r-1}
\\ \hline
(\pp\qq)^{r-1}\pp & 
\pm\bmatx{2r-1\ }{1}{-1}{0} {\rule[-3mm]{0pt}{5ex}}
 &  \bsigma_2^{1-2r}\sA=\sA\,\bsigma_1^{1-2r}
\\ \hline
(\qq\pp)^{r-1}\qq   &
\pm\bmatx{0}{-1}{1}{\ 1-2r} {\rule[-3mm]{0pt}{5ex}}
 & \bsigma_1^{1-2r}\sA=\sA\,\bsigma_2^{1-2r}
\\ \hline
\end{array}
$$
\end{table}

\noindent
Each entry of Table \ref{table2}
can be calculated directly according to (\ref{matrix_brho}) of
Remark \ref{B_3toPSL2Z} and 
Table \ref{table1}.

\begin{Example}[Slope $0:$ odd numbered metallic ratios]
\label{ex_metallic}
We consider the case of slope $\xi=0/1$ 
and level $N\in\N$. 
According to  (\ref{ml-vs-pq})-(\ref{mn-levelN}), 
the level-slope label $(N,0/1)$ corresponds to
the primitive Lissajous type 
$(m,n)=(3N-2, 1-3N)\in\mathscr{P}_0$.
We have $\sH_{m,n}=(\dd\bb)^{N-1}\dd$, which
together with Table \ref{table1} derives 
$$
\sW_{m,n}=(\dd\bb)^{N-1}\dd\cdot (\pp\qq)^{N-1}\pp
=\bsigma_2^{2N-1}\bsigma_1^{1-2N}
=\pm \begin{pmatrix}
1+(2N-1)^2 \ & \ 2N-1 \\
2N-1 & 1
\end{pmatrix}.
$$
It is not difficult to see that the 
3-braid $\sW_{m,n}=\sW_{3N-2, 1-3N}$
has dilatation (viz. the bigger eigenvalue 
as an element of $\PSL_2(\Z)$)
equal to $(\pmb{\phi}_{2N-1})^{ 2}$, where, in general, 
the $k$-th metallic ratio $\pmb{\phi}_{k}$ 
$(k=1,2,\dots)$
is defined by
$$
\pmb{\phi}_{k}:=\frac{\,1\,}{2}(k+\sqrt{k^2+4})
\qquad(k=1,2,\dots)
.
$$
The first two cases $N=1,2$
(for the golden ratio $\pmb{\phi}_1$ 
and the bronze ratio $\pmb{\phi}_3$) 
conform to $\sW_{1,-2}=\dd\cdot\pp$ 
(Example \ref{ex:m=1}) and
$\sW_{4,-5}=\dd\bb\dd\cdot\pp\qq\pp$ (Figure 
\ref{Ex4-5fig}).
See \cite[Appendix A]{FT11} for a three-rod protocol 
with dilatation $\pmb{\phi}_k^2$ and related topics.
\end{Example}

\begin{Example} Consider the case $(m,n)=(-11,16)\in \mathscr{P}_0$ 
where $\ell=-9$. We have 
$$
(\eps'_k)_{k=1}^{22}=(0, 1, 0, 0, 1, 0, 1, 0, 0, 1, 0, 1, 0, 1, 1, 0, 1, 0, 1, 1, 0, 1)
$$
in Proposition \ref{mainFormula}. 
Formulas (\ref{ml-vs-pq})-(\ref{mn-levelN}) determine
the level and the slope as $N=1$, $\xi=\frac23$. 
The Christoffel word $\chrff_{2/3}=00101$ has the palindromic
conjugate $\pww_{2/3}=01010$, which derives the sequence 
in Proposition \ref{LR-cluster} as $(r_i)=(1,2,1,2,1)$.
We then obtain
\begin{align*}
\sW_{-11,16}&=\sH_{-11,16}\cdot\sH_{16,-11} \\
&=\bb\qq\pp\qq\bb\qq\pp\qq\bb\cdot 
\qq\bb\dd\bb\qq\bb\dd\bb\qq
\left(=\pm\bmatx{586}{-741}{-741}{937}\in\PSL_2(\Z)\right).
\end{align*}
Using Table 1, we can compute $\sW_{-11,16}$ as a hyperbolic
element of $\PSL_2(\Z)$ whose farther endpoint
has odd continued fraction expansion as 
$$
\frac{9+5\sqrt{61}}{38}=1+
\frac{1}{3 } \cplus  \frac{1}{1}\cplus\frac{1}{3}\cplus\frac{1}{1}
\contdots 
$$
The sequence of  partial convergents indeed
repeats $(2r_i-1)_i 
=(13131)$.
See Figure \ref{fig_standard} [left]. 
\end{Example}

\begin{Example}
Consider the case $(m,n)=(-23,28)\in \mathscr{P}_0$ 
where $\ell=-17$.
Writing $w\in\{0,1\}^\ast$ to abbreviate a sequence in $\{0,1\}^{|w|}$, 
we have 
$$
(\eps'_k)_{k=1}^{46}=
(0110100101101101001011010010110100100101101001)
$$
in Proposition \ref{mainFormula}. 
Formulas (\ref{ml-vs-pq})-(\ref{mn-levelN}) determine
the level and the slope as $N=2$, $\xi=\frac14$. 
The Christoffel word $\chrff_{1/4}=00001$ has the palindromic
conjugate $\pww_{1/4}=00100$, which derives the sequence 
in Proposition \ref{LR-cluster} as $(r_i)=(22322)$.
We then obtain
\begin{align*}
\sW_{-23,28}&=\sH_{-23,28}\cdot\sH_{28,-23} \\
&=
\bb\dd\bb\qq\pp\qq\bb\dd\bb\dd\bb\qq\pp\qq\bb\dd\bb \cdot \qq\pp\qq\bb\dd\bb\qq\pp\qq\pp\qq\bb\dd\bb\qq\pp\qq
\left(=\pm\bmatx{31162}{-103259}{-103259}{342161}\in\PSL_2(\Z)\right).
\end{align*}
Using Table 1, we obtain $\sW_{-23,28}$ as a hyperbolic
element of $\PSL_2(\Z)$ whose farther endpoint
has the odd continued fraction expansion as 
$$
\frac{509+5\sqrt{14933}}{338}=3+
\frac{1}{3 } \cplus  \frac{1}{5}\cplus\frac{1}{3}\cplus\frac{1}{3}
\contdots 
$$
The sequence of  partial convergents indeed
repeats $(2r_i-1)_i 
=(33533)$. 
See Figure \ref{fig_standard} [right]. 
\end{Example}

\begin{figure}[h]
\begin{center}
\begin{tabular}{cc}
\includegraphics[width=0.45\textwidth]{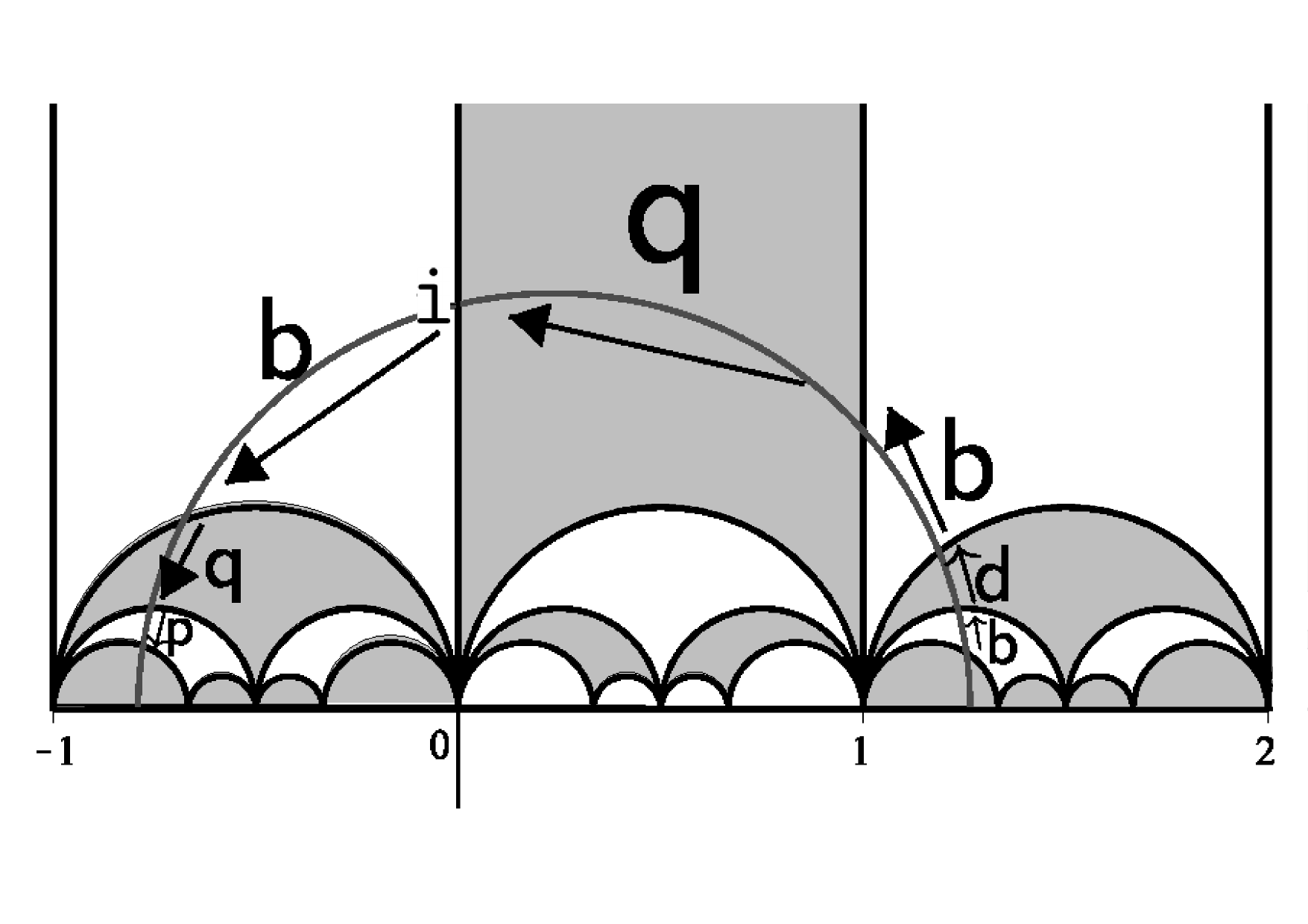} &
\includegraphics[width=0.45\textwidth]{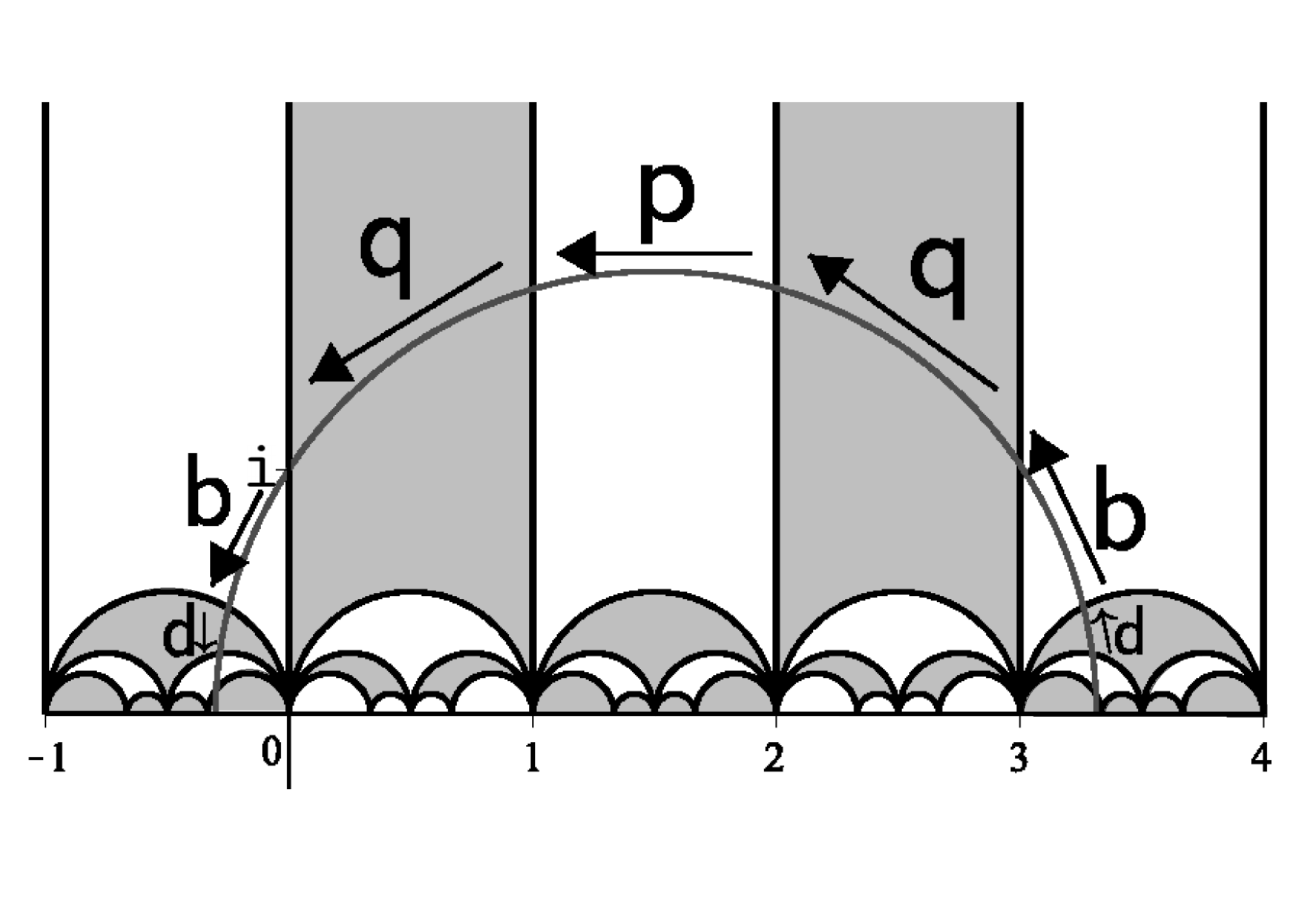}
\end{tabular}
\end{center}
\caption{Standard infinite lift $\tilde{\gamma}_{m,n}$
of the Lissajous 
shape curve in $\mathfrak{H}$
in the case $(m,n)=(-11,16)$[left] 
and 
in the case $(m,n)=(-23,28)$[right]. }
\label{fig_standard}
\end{figure}

\subsection{Remark on syzygy sequence}
In the planar 3-body problem, (homotopy classes of) 
periodic motions of three bodies are 
coded by syzygy sequences that count occurrences of collinear positions
(\cite{M98}, \cite{MM15}). 
Given a collision-free periodic motion of a triangle 
$\{\Delta(t)\}_{0\le t\le 1}$, there arises a 
closed curve $\{\gamma(t)=\psi(\Delta(t))\}_{0\le t\le 1}$ with $\gamma(0)=\gamma(1)$
on the shape sphere $\mathbf{P}^1_\psi-\{1,\omega,\omega^2\}$.
When $\gamma(t)$ passes the equator $\{|\psi|=1\}$, the vertices of $\Delta(t)$ are located
on a straight line, i.e., form an `eclipse' if the vertices are regarded as three
celestial bodies.
Divide the equator by the collision points $\{1,\omega,\omega^2\}$
into three arcs labelled with a1, a2, a3 adjacent to the regions
$[\RN{1}^\pm]$, $[\RN{2}^\pm]$, $[\RN{3}^\pm]$ respectively.
The syzygy sequence of $\gamma(t)$ is by definition
the cyclic sequence of the letters $1,2,3$ 
arranged in the order of the curve $\gamma(t)$ passing those arcs a1, a2, a3.
A syzygy sequence is called {\it reduced} if same letters never adjoin
in the sense of cyclic word. 
It is not difficult to see that the Lissajous motion of type $(m,n)\in\mathscr{P}_0$
produces a reduced syzygy sequence, which can be read off from the corresponding
level $N$ and slope $\xi=q/p$ as follows: 
Form the palindromic conjugate $\pww_{q/p}$ of the Christoffel word $\chrff_{q/p}$ and
get $\varphi_N(\pww_{q/p})$ as a sequence in $N$ and $N+1$
(cf. Proposition \ref{LR-cluster} (a)).
Consider a sequence 
$\Omega=(\boldsymbol\epsilon_1,\boldsymbol\epsilon_2,\dots)\in\{+,-\}^\ast$ in two symbols $+,-$
obtained by 
replacing $N$ (resp. $N+1$) by ``$(-+)^{N-1}-$'' (resp. ``$(-+)^{N}-$'') in
$\varphi_N(\pww_{q/p})$ if $m>0$ or by
``$(+-)^{N-1}-$'' (resp. ``$(+-)^{N}-$'') if $m<0$.
Note that the length $|\Omega|$ is $p(2N-1)+q(2N+1)$ which equals to the
length of $\sH_{m,n}$ as a reduced word in $\bb,\dd,\pp,\qq$ and that
$\Omega$ is indeed obtained from the $\bb\dd\pp\qq$-word $\sH_{m,n}$ 
by substituting $+$ (resp. $-$) for 
$\bb,\qq$ (resp. $\dd,\pp$) (cf.\,Proposition \ref{LR-cluster}).
Write the six times repetition $\Omega^6$ of $\Omega$ as $(\boldsymbol\epsilon_i)_{i=1}^{6|\Omega|}$.
Then, the syzygy sequence for one-period of the Lissajous motion of type 
$(m,n)\in\mathscr{P}_0$ is given by $(a_i)\in\{1,2,3\}^\ast$ subject to 
the rule $a_1=1$, $a_{i+1}=a_i\pm 1$ mod 3 with $\pm=\boldsymbol\epsilon_i$.

\begin{Example}
Let $(m,n)=(-8,13)\in\mathscr{P}_0$ which has slope $\xi=1/4$ and level $N=1$.
We have $\pww_{1/4}=00100$ and $\varphi_1(\pww_{1/4})=11211$ so that
$\sW_{-8,13}=\bb\qq\bb\dd\bb\qq\bb \cdot \qq\bb\qq\pp\qq\bb\qq$ and 
$\Omega=+++-+++$. 
One period of the syzygy sequence is then
$$
1231312.3123231.2312123.1231312.3123231.2312123.
$$
\end{Example}

General reduced syzygy sequences of periodic 3-body motions are labeled by
the binary cyclic words  (necklaces) in $\{+,-\}$
corresponding to one-period of the motion
(that is $\Omega^6$ in the Lissajous case).
A characterization of Lissajous syzygy sequences among general ones 
will be discussed in a separate article (\cite{Og}).

\subsection{Note on knot theory}
Before closing this section, we call attentions to a series of works
on ``Lissajous-toric knots'' by 
C.Lamm, M.Soret-M.Ville et.al (cf.\,\cite{L98}, \cite{LO99}, \cite{L00},
\cite{SV11}, \cite{SV20}).
In \cite[\S 3]{SV11} is defined a 
{\it simple minimal braid} 
$B(N,m,n,\phi)=\{f_0,\dots,f_{N-1}:[0,1]\to \C\}\in B_N$ by
\begin{equation*}
f_k(t)=\cos\frac{2\pi m}{N}(t+k+\phi)+\bI\sin\frac{2\pi n}{N}(t+k)
\quad (k=0,\dots,N-1)
\end{equation*}
with an explicit formula in braid generators given.
By the closing procedure ($\beta\mapsto\hat\beta$ of \cite[2.1]{Bi75}),
the braid $B(N,m,n,\phi)$ produces the {\it Lissajous-toric knot}
$K(N,m,n,\phi)$ (that was also called a ``simple minimal knot'' 
in \cite{SV11}).
Lamm already in the late 1990's (\cite{L98}, \cite{LO99}) began to study
the same class of knots as billiard knots in a cylinder and in a flat solid
torus (a cube with identified front and rear).
The special case $B(3,m,n,-\frac{3}{4m})\in B_3$ covers our
$W_{m,n}\in\mathcal{B}_3$ of collision-free type $(m,n)$, while
considering general cases $B(N,m,n,\phi)$ is crucial to produce 
important series of knots including the figure eight $K(3,10,4,\ast)$ 
(see \cite[\S 8]{SV11}).
The knot $K(N,p,q,\phi)$ is shown by Lamm to be independent 
of the phase parameter $\phi$ up to mirrors and to have 
a similar palindromic symmetry which
is described in knot theoretical terms as being a 
``ribbon knot'' (or ``symmetric union'') when $m$ and $n$ are coprime.
On the other hand, the classical Lissajous knots form a family 
with properties ``quite different'' from Lissajous-toric
knots, as remarked in \cite[p.517]{SV11}.

In a forthcoming paper, we will provide a classification of Lissajous 3-braids
with arbitrary phase parameter $\phi$
in a parallel way to the present paper in terms of levels and slopes.

\section{Dilatation of Lissajous $3$-braid} 
\label{section_dilataions}

In Example \ref{ex_metallic}, 
we computed the dilatation of the Lissajous 3-braid corresponding to the slope $\frac{0}{1}$. 
In this section we investigate a fine structure of dilatations for Lissajous 3-braids. 
We will see in Corollary \ref{cor_metallic} that 
the Lissajous 3-braid corresponding to $(N, \frac{0}{1}) \in \mathcal{L}$  
has the smallest dilatation among all Lissajous 3-braids with the same level $N$.

We set $R:= \sA \sB \sB (= \bsigma_2)$ 
and $L:= \sA \sB (= \bsigma_1^{-1})$. 
For convenience, we call $R$ and $L$ {\it letters}. 
For positive integers $p_j$'s and $q_j$'s, 
the braid 
$ R^{p_1} L^{q_1} \cdots R^{p_r} L^{q_r} \in \mathcal{B}_3 $ represented by letters $R$ and $L$
is  pseudo-Anosov, i.e., it corresponds to a hyperbolic element of $\PSL_2(\Z)$, 
see \cite{H97} for example. 
Moreover  any pseudo-Anosov $3$-braid $\beta$
 is conjugate to a braid $R^{p_1} L^{q_1} \cdots R^{p_r} L^{q_r}$ 
 which is unique up to cyclic permutation (cf. \cite[Proposition 2.1]{M74}). 
 In this case we call the integer $r \ge 1$ the {\it block length} of $\beta$. 
Then the dilatation $\lambda(\beta)$ of $\beta$ is equal to 
the bigger eigenvalue of 
$$M_{(p_1, q_1, \ldots, p_r, q_r)}:=  
\bmatx{1}{1}{0}{1}^{p_1} \bmatx{1}{0}{1}{1}^{q_1} \dots  \bmatx{1}{1}{0}{1}^{p_r} \bmatx{1}{0}{1}{1}^{q_r} $$
which belongs to the monoid  $\mathrm{SL}_2(\Z_{\ge 0})$.

\begin{Lemma}
\label{lem_stretch}
Let $p_1, q_1, \dots, p_r, q_r$ and $s_1, t_1, \dots, s_u, t_u$ are positive integers. 
Suppose that 
$R^{p_1} L^{q_1} \cdots R^{p_r} L^{q_r}$ 
is obtained from 
$ R^{s_1} L^{t_1} \cdots R^{s_u} L^{t_u}$ 
by removing some  letters $R$'s or $L$'s. 
Then we have $$\lambda(R^{p_1} L^{q_1} \cdots R^{p_r} L^{q_r}) < \lambda(R^{s_1} L^{t_1} \cdots R^{s_u} L^{t_u}).$$
\end{Lemma}

For example, 
$RLRL$, $RRRL$, $RRL$ and $RLL$ are obtained from $RLRRL$ by removing letters. 
Hence their dilatations are smaller than that of $RLRRL$.

\begin{proof}[Proof of Lemma \ref{lem_stretch}]
For non-negative square matrices $M= (m_{ij})$ and $N=(n_{ij})$ with the same degree, 
we write $M \le N$ if $m_{ij} \le n_{ij}$ for each $i$ and $j$.  
Note that $\bmatx{1}{1}{0}{1}^{k}  \le \bmatx{1}{1}{0}{1}^{\ell}$ and 
$ \bmatx{1}{0}{1}{1}^{k} \le  \bmatx{1}{0}{1}{1}^{\ell}$ when $0 \le k \le \ell$.
Moreover $ M_{(1,1)} = \bmatx{2}{1}{1}{1} \le  M_{(p_1, q_1, \ldots, p_r, q_r)}$ 
which says that $M_{(p_1, q_1, \ldots, p_r, q_r)}$ is a positive matrix. 
Thus by the assumption of the lemma, it follows that 
$$
{\bf 0} < M_{(p_1, q_1, \dots, p_r, q_r)} \le M_{(s_1, t_1, \dots, s_u, t_u)}.
$$ 
Since some of the entries of $M_{(p_1, q_1, \ldots, p_r, q_r)} $ is strictly smaller than the corresponding entry of $M_{(s_1, t_1, \dots, s_u, t_u)}$, 
we have $M_{(p_1, q_1, \ldots, p_r, q_r)} \ne M_{(s_1, t_1, \dots, s_u, t_u)}$. 
The Perron-Frobenius theorem (cf. \cite[Theorem 1.1(e)]{S81}) 
tells us the desired inequality between the largest eigenvalues of matrices. 
\end{proof}

\begin{center}
\begin{figure}[h]
\begin{center}
\includegraphics[width=0.85\textwidth]{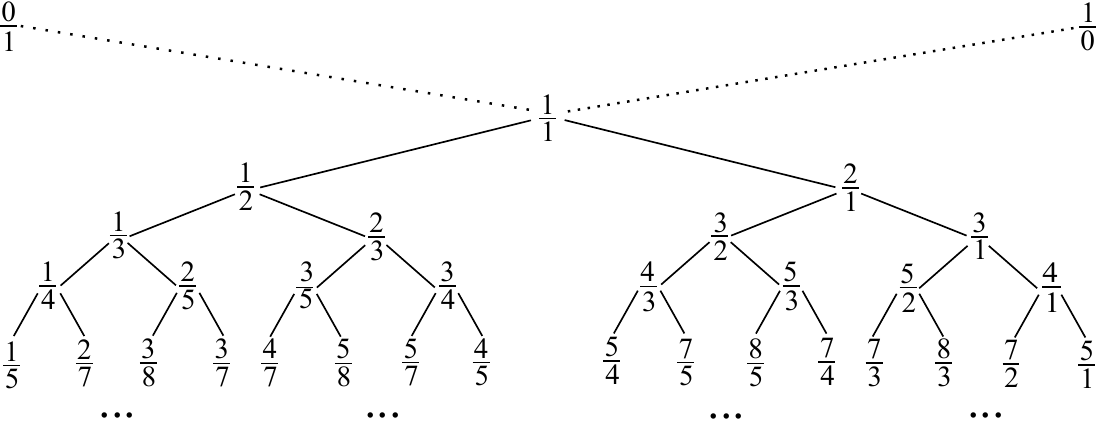}
\end{center}
\caption{A part of the Stern-Brocot tree.}
\label{fig_tree}
\end{figure}
\end{center}

We now quickly recall the {\it Stern-Brocot tree} (Figure \ref{fig_tree}) which is closely related to the Christoffel words. 
See  \cite[Sections 14.1, 14.2]{Reu19} for more details.
To do this, 
we first define  the {\it median} of rational numbers $\frac{q}{p}$ and $\frac{q'}{p'}$ by $\frac{q+q'}{p+p'}$. 
The Stern-Brocot tree  is an infinite complete binary tree 
whose vertices  correspond  to positive rational numbers, 
and every positive rational number appears in the tree exactly once. 
The Stern-Brocot tree has the root $\frac{1}{1}$ which is the median of $\frac{0}{1}$ and $\frac{1}{0}$. 
Each vertex of the  tree is of the median $\frac{q+q'}{p+p'}$ of $\frac{q}{p}$ and $\frac{q'}{p'}$, 
where $\frac{q}{p}$  is the nearest ancestor (i.e. vertex) above and to the left in the tree, and 
$\frac{q'}{p'}$ is the  the nearest ancestor above and to the right in the tree. 
If there is no  ancestor above and to the left (resp. ancestor above to the right), 
we set $\frac{q}{p}= \frac{0}{1}$ (resp. $\frac{q'}{p'}= \frac{1}{0}$). 
Then, we let  
$I(\frac{q+q'}{p+p'}):= [\frac{q}{p}, \frac{q'}{p'}]$,
which is the closed interval between $\frac{q}{p}$ and $\frac{q'}{p'}$. 
Note that 
$I(\frac{q+q'}{p+p'})= [\frac{q}{p}, \frac{q'}{p'}]$ if and only if $pq'-qp'=1$ (\cite[(4.31) in Set.\,4.5]{GKP94}), 
or equivalently, 
$\chrff_{(q+q')/(p+p')}= \chrff_{q/p} \chrff_{q'/p'}$ as the standard factorization of Christoffel words (Remark \ref{PalConj}).
For example, $I(\frac{2}{3}) = [\frac{1}{2}, \frac{1}{1}]$ 
since $\frac{2}{3}$ has the nearest ancestor $\frac{1}{2}$ (resp. $\frac{1}{1}$) above and to the left  (resp. right). 
Similarly we have 
$$I(\tfrac{4}{7})= [\tfrac{1}{2}, \tfrac{3}{5}] \subset I(\tfrac{3}{5})=[\tfrac{1}{2}, \tfrac{2}{3}]  \subset I(\tfrac{2}{3})=  [\tfrac{1}{2}, \tfrac{1}{1}].$$
Observe that 
$I(\frac{q}{p}) \subset I(\frac{s}{r})$ means that 
$\frac{q}{p}$ is a descendant of $\frac{s}{r}$ in the Stern-Brocot tree.

For $(m,n)\in \mathscr{P}_0$, 
let $(N,\frac{q}{p}) \in \mathcal{L}$ be the associated pair of level and slope coming from Theorem \ref{maintheorem}. 
To study the dilatations of Lissajous classes $C\{m,n\}$, 
we now introduce another representative $\sW_{(N, \frac{q}{p})} $ of $C\{m,n\}$. 
To do this, we consider the  image $\varphi_N(\chrff_{q/p}) = (s_1, \dots, s_{p+q}) $ of $\chrff_{q/p}$ 
under the morphism $\varphi_N: \{0,1\}^\ast \rightarrow \{N, N+1\}^\ast$. 
We set 
\begin{eqnarray*}
\sW_{(N, q/p)} 
&=& 
(\dd\bb)^{s_1-1}\dd \  (\pp\qq)^{s_2-1}\pp \cdots (\dd\bb)^{s_{p+q}-1}\dd 
\cdot (\pp\qq)^{s_1-1}\pp \  (\dd\bb)^{s_2-1}\dd \cdots (\pp\qq)^{s_{p+q}-1}\pp
\\
&=&R^{2s_1-1} L^{2s_2-1} \cdots R^{2s_{p+q}-1} 
\cdot 
L^{2s_1-1} R^{2s_2-1} \cdots L^{2s_{p+q}-1}, 
\end{eqnarray*} 
which is a pseudo-Anosov  $3$-braid with the block length $p+q$. 
One sees that 
$\sW_{(N, q/p)} $  is conjugate to $\sW_{m,n} \in C\{m,n\}$ in $\mathcal{B}_3$. 
This is because up to cyclic permutation, 
$(s_1, \dots, s_{p+q}) $ is equivalent to the  palindromic sequence $(r_1,\dots,r_{p+q}) $ given in Proposition \ref{LR-cluster}. 
Observe that the block length of Lissajous 3-braid increases 
when the corresponding slope descends in the Stern-Brocot tree.

We claim that there are many pseudo-Anosov 3-braids that are not representatives of Lissajous classes. 
In fact, from the above expression of $\sW_{(N, q/p)} $, 
the Lissajous 3-braid has the block length $1$ 
if and only if its slope $\frac{q}{p}$ is equal to $\frac{0}{1}$. 
Hence if a pseudo-Anosov 3-braid $R^m L^n$ ($m,n \ge 1$) with block length $1$ 
is a Lissajous 3-braid, then $m=n=2N-1$ for some $N \ge 1$.

\begin{Example}
We have $\chrff_{4/7}= \chrff_{1/2}  \chrff_{3/5}= \chrff_{1/2}  \chrff_{1/2} \chrff_{2/3}= 00100100101$ and 
$\varphi_1(\chrff_{4/7}) = (1, 1, 2, 1, 1, 2, 1, 1, 2, 1, 2)$. 
Hence  $\sW_{(1, 4/7)}$ is given by 
$$\sW_{(1, 4/7)}= RLR^3 L R L^3 RLR^3L R^3 \cdot  LRL^3RLR^3LRL^3RL^3.$$
The Christoffel word $\chrff_{2/3} =00101 $ is contained in $\chrff_{4/7}$ as a subword. 
As a result, 
$$\sW_{(1, 2/3)}=  RLR^3L R^3 \cdot  LRL^3RL^3$$
is obtained from $\sW_{(1, 4/7)}$ by removing letters $R$'s and $L$'s. 
\end{Example}

We are now ready to prove Theorem \ref{thm_dilataions}. 
  
 \begin{proof}[Proof of Theorem \ref{thm_dilataions}]
 For the proof of (i), 
 we let $\varphi_N(\chrff_{q/p}) = (s_1, \dots, s_{p+q}) $. 
 Since $\varphi_{N+1}(\chrff_{q/p})$ is written by $(s_1+1, \dots, s_{p+q}+1) $, 
 we have 
 $$\sW_{(N+1, q/p)} = R^{2s_1+1} L^{2s_2+1} \cdots R^{2s_{p+q}+1} \cdot L^{2s_1+1} R^{2s_2+1} \cdots L^{2s_{p+q}+1}.$$
 Comparing the expression of $\sW_{(N, q/p)}$ with that of $\sW_{(N+1, q/p)} $, 
one sees that 
$\sW_{(N, q/p)}$ is obtained from $\sW_{(N+1, q/p)}$ by removing letters  $R$'s and $L$'s. 
By Lemma \ref{lem_stretch}, we finish the proof.

Let us turn to the proof of (ii). 
By the assumption of (ii), it follows that $I(\frac{q}{p}) \subset I(\frac{s}{r})$, and 
$\chrff_{q/p}$ contains $\chrff_{s/r}$ as a subword. 
This means that 
$\sW_{(N, s/r)}$ is obtained from $\sW_{(N, q/p)}$ by removing letters $R$'s or $L$'s. 
By Lemma \ref{lem_stretch} we have 
the desired inequality 
$\lambda(\sW_{(N, s/r)}) < \lambda(\sW_{(N, q/p)})$. 
 \end{proof}

 \begin{Example} 
 Recall that 
 $I(\tfrac{4}{7}) \subset I(\tfrac{3}{5})  \subset I(\tfrac{2}{3})$. 
 By the definition of the set  $\mathcal{L}$, we have 
 $(N, \frac{4}{7}), (N, \frac{2}{3}) \in \mathcal{L}$ (while  $(N, \frac{3}{5}) \not\in \mathcal{L}$). 
 Theorem \ref{thm_dilataions}(ii) tells us that 
 $\lambda(\sW_{(N, 2/3)}) < \lambda(\sW_{(N, 4/7)})$. 
 Similarly it follows that 
 $\lambda(\sW_{(N, 3/2)}) < \lambda(\sW_{(N, 7/4)})$ 
 for $(N, \frac{7}{4}), (N, \frac{3}{2}) \in \mathcal{L}$. 
 \end{Example}

 Recall that $\chrff_{0/1}=0$ and $\sW_{(N, 0/1)}= R^{2N-1}L^{2N-1}$ (cf. Example \ref{ex_metallic}). 
This means that 
$\sW_{(N, 0/1)}$ can be obtained from $\sW_{(N, q/p)}$ by removing letters $R$'s or $L$'s 
for each $(N, \frac{q}{p}) \in \mathcal{L}$ with $\frac{q}{p} \ne \frac{0}{1}$. 
Thus the following corollary holds. 

\begin{Corollary}
\label{cor_metallic}
We have $ \lambda(\sW_{(N, 0/1)}) <  \lambda(\sW_{(N, q/p)})$ 
for all $(N, \frac{q}{p}) \in \mathcal{L}$ with $\frac{q}{p} \ne \frac{0}{1}$. 
In particular the Lissajous 3-braid corresponding to   $(N, \frac{0}{1}) \in \mathcal{L}$  
has the smallest dilatation among all Lissajous 3-braids with the same level $N$. 
\end{Corollary}


{\sc Acknowledgements.}
The authors would like to thank Susumu Hirose, Christoph Lamm,
Mitsuru Shibayama and Michihisa Wakui
for crucial remarks and/or valuable information on relevant researches.
We also thank the referee for numerous useful comments that 
help improving the presentation of our paper.


\end{document}